\newtheorem{dfn}{Definition}[subsection]
\newtheorem{prop}[dfn]{Proposition}
\newtheorem{thm}[dfn]{Theorem}
\newtheorem{lem}[dfn]{Lemma}
\newtheorem{cor}[dfn]{Corollary}
\newtheorem{conj}[dfn]{Conjecture}
\newcommand\ag{\mathfrak{a}_g}
\newcommand\agminus{\mathfrak{a}_g^-}
\newcommand\Der{{\rm Der}}
\newcommand\deromega{{\rm Der}_\omega}
\newcommand\Hom{{\rm Hom}}
\newcommand\hotimes{\hat{\otimes}}
\newcommand\Ker{{\rm Ker}}
\newcommand\Qpi{\mathbb{Q}\hat{\pi}}
\newcommand\fsp{\mathfrak{sp}}
\newcommand\T{\widehat{T}}
\begin{document}

\title
{The center of the Goldman Lie algebra
of a surface of infinite genus}
\author{Nariya Kawazumi and Yusuke Kuno}
\date{}
\maketitle

\begin{abstract}
Let $\Sigma_{\infty, 1}$ be the inductive limit of compact oriented
surfaces with one boundary component. We prove the center of the 
Goldman Lie algebra of the surface $\Sigma_{\infty,1}$ is spanned by 
the constant loop. A similar statement for a closed oriented surface was
conjectured by  Chas and Sullivan, and proved by Etingof. Our result is
deduced from a computation of the center of the Lie algebra of oriented
chord diagrams. 
\end{abstract}

\section{Introduction}
Let $S$ be a connected oriented surface and let $\hat{\pi} =
\hat{\pi}(S)=[S^1, S]$ be the set of free homotopy classes of oriented loops on $S$. 
In 1986 Goldman \cite{Go} introduced a Lie algebra structure on the vector 
space $\Qpi$ spanned by the set $\hat{\pi}$. Nowadays this Lie algebra
is called {\it the Goldman Lie algebra}, whose bracket is defined as follows.
Let $\alpha$, $\beta$ be immersed loops on $S$ such that
their intersections consist of transverse double points.
For each $p\in \alpha \cap \beta$, let $|\alpha_p\beta_p|$
be the free homotopy class of the loop first going the oriented
loop $\alpha$ based at $p$, then going $\beta$ based at $p$.
Also let $\varepsilon(p;\alpha,\beta)\in \{ \pm 1\}$ be the local
intersection number of $\alpha$ and $\beta$ at $p$, and set
$$[\alpha,\beta]:=\sum_{p\in \alpha \cap \beta}
\varepsilon(p;\alpha,\beta)|\alpha_p\beta_p|\in \mathbb{Q}\hat{\pi}.$$
He proved this descends to a Lie bracket on the vector space $\mathbb{Q}\hat{\pi}$.
It is clear from the definition that if $\alpha$ and $\beta$ are freely
homotopic to disjoint curves, then $[\alpha,\beta]=0$.
In the same paper, he proved a part of the opposite direction.
\begin{thm}[Goldman \cite{Go} Theorem 5.17]
\label{Goldman}
Let $\alpha, \beta \in \hat{\pi}$, where $\alpha$ is represented 
by a simple closed curve. Then $[\alpha, \beta]= 0$ in $\Qpi$ if and only if 
$\alpha$ and $\beta$ are freely homotopic to disjoint curves.
\end{thm}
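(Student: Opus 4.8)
\emph{Proof idea.} One implication is immediate from the definition: if $\alpha$ and $\beta$ admit disjoint representatives, the sum defining $[\alpha,\beta]$ is empty. For the converse we argue by contraposition, showing that if $\alpha$ is represented by a simple closed curve and $\alpha,\beta$ \emph{cannot} be freely homotoped apart, then $[\alpha,\beta]\neq 0$ in $\Qpi$. If $\alpha$ is null-homotopic or peripheral it already has a representative disjoint from any prescribed representative of $\beta$, so there is nothing to prove; we may thus assume $\alpha$ is essential and non-peripheral, and (disposing of the torus and the annulus, where $\pi_1(S)$ is abelian, by a direct computation) equip $S$ with a complete hyperbolic metric in which $\alpha$ is a simple closed geodesic.

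Fix representatives of $\alpha$ and $\beta$ in minimal position, so that $\alpha\cap\beta=\{p_1,\dots,p_n\}$ with $n=i(\alpha,\beta)\geq 1$ and $[\alpha,\beta]=\sum_{k=1}^{n}\varepsilon(p_k;\alpha,\beta)\,|\alpha_{p_k}\beta_{p_k}|$. Choosing a path from the basepoint to $p_k$ identifies $|\alpha_{p_k}\beta_{p_k}|$ with the conjugacy class of a product $A_kB_k\in\pi_1(S)$, where $A_k$ is a conjugate of $[\alpha]$ and $B_k$ a conjugate of $[\beta]$; in the universal cover $\widetilde S$ this is the product of two hyperbolic isometries whose axes are a lift of $\alpha$ and a lift of $\beta$ meeting transversally, and only there, at a lift of $p_k$. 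Two features should be decisive: since $\alpha$ is \emph{simple}, the lifts of $\alpha$ are pairwise disjoint geodesics, and since the curves are in minimal position, no lift of $\beta$ crosses a lift of $\alpha$ more than once (there are no bigons).

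The plan is to reduce the theorem to the non-cancellation statement \emph{``whenever $|\alpha_{p_k}\beta_{p_k}|=|\alpha_{p_\ell}\beta_{p_\ell}|$ one has $\varepsilon(p_k;\alpha,\beta)=\varepsilon(p_\ell;\alpha,\beta)$''} and then to establish this statement in $\widetilde S$. Granting it: each $A_kB_k$ is hyperbolic, hence nontrivial, and within a block of coinciding terms all signs agree, so collecting terms exhibits $[\alpha,\beta]$ as a nonzero integral combination of distinct elements of $\hat\pi$. To prove the statement, an equality $|\alpha_{p_k}\beta_{p_k}|=|\alpha_{p_\ell}\beta_{p_\ell}|$ says that some deck transformation carries the ordered pair (axis of $A_k$, axis of $B_k$) to an ordered pair with the same product; using disjointness of the $\alpha$-lifts together with the no-bigon condition, I would argue that this forces the two crossings to have the same local type, and in particular the same sign --- the sign being determined by the cyclic order at infinity of the four endpoints of the two crossing axes. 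As an auxiliary tool, sharpening the length bound $\ell(\alpha_{p_k}\beta_{p_k})\leq\ell(\alpha)+\ell(\beta)$, one may pinch $\alpha$: the geodesic representative of $\alpha_{p_k}\beta_{p_k}$ must cross the collar of $\alpha$ exactly $i(\alpha_{p_k}\beta_{p_k},\alpha)$ times, which as $\ell(\alpha)\to 0$ separates terms with different such intersection numbers.

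The crux is this non-cancellation step, and two genuine phenomena make it delicate. First, distinct crossings can produce \emph{freely homotopic} loops $\alpha_{p_k}\beta_{p_k}$, so one cannot expect all terms to be distinct; the point is only that such coincidences never pair crossings of opposite sign. Second, the signs $\varepsilon(p_k;\alpha,\beta)$ need not all agree --- already when $\alpha$ separates they alternate along $\beta$ --- so there is no cheap ``leading term'' argument, and this is exactly where simplicity of $\alpha$ and the absence of bigons must enter: a cancelling coincidence would manifest, after lifting, as a configuration forbidden in minimal position, such as an arc of $\beta$ homotopic rel endpoints into $\alpha$ or a lift of $\beta$ meeting a lift of $\alpha$ twice. (An alternative route, closer to the symplectic geometry pervading the rest of Goldman's paper, identifies $[\alpha,\beta]$ with Poisson brackets of the trace functions $f_\alpha,f_\beta$ on surface-group character varieties and invokes that the Fenchel--Nielsen twist flow along the simple curve $\alpha$, the Hamiltonian flow of $f_\alpha$, moves $f_\beta$ nontrivially precisely when $i(\alpha,\beta)\neq0$.)
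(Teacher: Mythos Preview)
The paper does not supply its own proof of this theorem: it is quoted as Goldman's Theorem~5.17 and used only as background for the discussion of Conjecture~\ref{conjecture}. There is therefore nothing in the paper to compare your argument against.

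For what it is worth, your outline tracks Goldman's original proof closely. He too passes to geodesic representatives in a hyperbolic metric and reduces everything to the non-cancellation lemma you isolate: if $\alpha$ is a simple closed geodesic, $\beta$ a closed geodesic, and $p,q\in\alpha\cap\beta$ satisfy $|\alpha_p\beta_p|=|\alpha_q\beta_q|$, then $\varepsilon(p;\alpha,\beta)=\varepsilon(q;\alpha,\beta)$. Your heuristic for this step --- lift to $\widetilde S$, use that the lifts of the simple curve $\alpha$ are pairwise disjoint, and read the sign from the cyclic order of endpoints on $\partial\widetilde S$ --- is the right picture, but as written it is an intention rather than a proof. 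The substantive work is to show that a conjugacy $g(A_kB_k)g^{-1}=A_\ell B_\ell$ forces $g$ to carry the axis of $A_k$ to that of $A_\ell$ and the axis of $B_k$ to that of $B_\ell$ (rather than scrambling them), and this is where simplicity of $\alpha$ is actually used: a nontrivial deck transformation with axis a lift of $\alpha$ stabilizes only that lift among all lifts of $\alpha$. Your pinching remark and the character-variety alternative are both legitimate side routes, but neither is needed once the axis-matching step is nailed down.
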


It is a fundamental problem to compute the center of a given Lie algebra.
We denote the center of a Lie algebra $\mathfrak{g}$ by 
$Z(\mathfrak{g})$. If $S$ is closed, then, from this theorem,
$\hat{\pi}\cap Z(\Qpi) = \{1\}$. Here $1 \in \hat{\pi}$ is 
the constant loop. 
Chas and Sullivan conjectured the following, and Etingof proved it.
\begin{thm}[Etingof \cite{E}]
\label{Etingof}
If $S$ is closed, the center $Z(\Qpi)$ of the Lie algebra $\Qpi$ is 
spanned by the constant loop $1 \in \hat{\pi}$.
\end{thm}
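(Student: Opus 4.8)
\emph{Reductions.} Since the constant loop $1$ is central, after subtracting a multiple of $1$ it suffices to show that a central element $z=\sum_{i=1}^{n}c_i\,a_i\in Z(\Qpi)$ with the $a_i\in\hat{\pi}\setminus\{1\}$ distinct and all $c_i\neq 0$ must have $n=0$. (Theorem~\ref{Goldman} already gives $\hat{\pi}\cap Z(\Qpi)=\{1\}$; the content is to exclude nontrivial linear combinations.) For genus $0$ the Goldman Lie algebra is $\mathbb{Q}\cdot 1$ and for genus $1$ it is $\mathbb{Q}\mathbb{Z}^2$ with $[v,w]=(v_1w_2-v_2w_1)\,[v+w]$, where a one-line computation gives the claim; so assume $S$ is closed of genus $\geq 2$. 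Fix a hyperbolic metric, represent every $a\neq 1$ by its geodesic $\alpha_a$, and write $\ell(a)>0$ for the length of $\alpha_a$, $\ell(1)=0$.

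\emph{Testing centrality against all powers of a simple closed curve.} The device is to bracket $z$ not merely with simple closed curves but with all of their powers. Let $\gamma$ be a simple closed geodesic, $\alpha=\alpha_a$ the geodesic of some $a\neq 1$, and for $q\in\alpha\cap\gamma$ let $h_q,g_q\in\pi_1(S,q)$ be $\alpha$ and $\gamma$ based at $q$. Passing to $\gamma^N$ does not change the image, so each transverse intersection of $\alpha$ with $\gamma$ becomes $N$ transverse intersections of $\alpha$ with $\gamma^N$, all with the same local sign $\varepsilon(q;\alpha,\gamma)$ and all contributing the same class $|h_q g_q^N|$; hence
$$[a,\gamma^N]\ =\ N\sum_{q\in\alpha\cap\gamma}\varepsilon(q;\alpha,\gamma)\,\bigl|h_q g_q^N\bigr|\qquad(N\geq1),$$
which for the torus reads $[(1,0),(0,N)]=N\,[(1,N)]$. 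Applying this to every $a_i$ and using $[z,\gamma^N]=0$ gives, for each simple closed geodesic $\gamma$ and each $N\geq 1$,
$$\sum_{i}\ \sum_{q\in\alpha_i\cap\gamma}c_i\,\varepsilon(q;\alpha_i,\gamma)\,\bigl|h_{i,q}\,g_q^N\bigr|\ =\ 0 .$$

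\emph{Length asymptotics and the top layer.} Realize $\pi_1(S)$ in $\mathrm{PSL}_2(\mathbb{R})$ and diagonalize a hyperbolic representative of $\gamma$ as $P\,\mathrm{diag}(\lambda,\lambda^{-1})\,P^{-1}$ with $\lambda=e^{\ell(\gamma)/2}$. Then $\mathrm{tr}(hg^N)=(P^{-1}hP)_{11}\lambda^N+(P^{-1}hP)_{22}\lambda^{-N}$, and since $\pi_1(S)$ is torsion-free with no Klein-bottle subgroup, no nontrivial $h$ carries one endpoint of the axis of $g$ to the other, so $(P^{-1}hP)_{11}\neq 0$ and
$$\ell\bigl(|hg^N|\bigr)\ =\ N\,\ell(\gamma)+\delta(h,g)+o(1),\qquad \delta(h,g):=2\log\bigl|(P^{-1}hP)_{11}\bigr| ,$$
a finite real number, unchanged if $h$ is replaced by any $g^k hg^{-k}$; moreover, by discreteness of $\pi_1(S)$, one has $|hg^N|=|h'g^N|$ for all large $N$ exactly when $h'$ is $\langle g\rangle$-conjugate to $h$. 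Hence in the vanishing sum above the contributions of asymptotically maximal length --- those with $\delta(h_{i,q},g)$ equal to the maximum $\Delta$ of these numbers --- must already cancel among themselves; grouping them by $\langle g\rangle$-conjugacy of the $h_{i,q}$, each group satisfies $\sum c_i\,\varepsilon(q;\alpha_i,\gamma)=0$.

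\emph{Choosing the test curve; the main difficulty.} It remains to choose $\gamma$ so that this top-layer cancellation is impossible. Fix a term $a_1$ of maximal length in the support of $z$. Since $\delta(h,g)$ measures how closely the axis of $h$ follows that of $g$ near $g^{+}$, one aims to take $\gamma$ running alongside a long geodesic sub-arc of $\alpha_1$ (and crossing the other $\alpha_i$ only in a controlled pattern) so that a single pair $(1,q_0)$ becomes the strict maximizer of $\delta(\cdot,g)$; then its group is a singleton and its nonzero coefficient $c_1\,\varepsilon(q_0;\alpha_1,\gamma)$ cannot cancel, a contradiction. Making this precise --- ruling out all accidental coincidences $|h_{i,q}g_q^N|=|h_{1,q_0}g_{q_0}^N|$ and all conspiracies among the remaining maximal-length terms, using the freedom to vary $\gamma$ and, if needed, the hyperbolic metric --- is the technical heart of the argument and the step I expect to be hardest. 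Once one uncancelled top term is produced, the displayed identity fails, so no such $z$ exists and $Z(\Qpi)=\mathbb{Q}\cdot 1$.
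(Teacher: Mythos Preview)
First, note that the paper does not give its own proof of Theorem~\ref{Etingof}: it is quoted as Etingof's result, with the remark that ``[h]is proof is based on symplectic geometry of the moduli space of flat $GL_N(\mathbb{C})$-bundles over the surface $S$.'' So there is no in-paper argument to compare against; your hyperbolic-geometry approach via geodesic lengths is in any case unrelated to Etingof's method.

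Your proposal is not a proof but a strategy, and you say so yourself. Everything rests on the final step: given $z=\sum_i c_i a_i$ with nontrivial support, produce a simple closed geodesic $\gamma$ such that among the terms $|h_{i,q}\,g_q^{N}|$ there is a \emph{unique} pair $(i,q)$ attaining the maximal defect $\delta(h_{i,q},g)$, or at least such that the top-$\delta$ layer cannot cancel. You offer only an intuition---let $\gamma$ shadow a long sub-arc of the longest $\alpha_1$---and explicitly defer ``ruling out all accidental coincidences \dots\ and all conspiracies among the remaining maximal-length terms'' as ``the technical heart of the argument and the step I expect to be hardest.'' But that step \emph{is} the theorem. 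The earlier ingredients (the bracket formula for $[a,\gamma^{N}]$, the asymptotic $\ell(|hg^{N}|)=N\ell(\gamma)+\delta(h,g)+o(1)$, the stratification by $\delta$) are routine and by themselves yield nothing. Concretely, nothing you wrote excludes the possibility that for every admissible $\gamma$ the maximal-$\delta$ layer involves several intersection points on the \emph{same} $\alpha_1$ with opposite signs, or several $a_i$ whose contributions conspire; the promised freedom ``to vary $\gamma$ and, if needed, the hyperbolic metric'' is asserted but never exploited. As written, the proposal has a genuine gap at precisely the point where the real work begins, and it is not clear that this hyperbolic strategy can be completed without an idea of comparable depth to Etingof's.
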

His proof is based on symplectic geometry of the moduli space 
of flat $GL_N(\mathbb{C})$-bundles  over the surface $S$. 
In this paper we study a variant of the Chas-Sullivan conjecture
and give a supporting evidence for it. The variant, 
in the most general setting, is stated as follows.
\begin{conj}\label{conjecture}
For any connected oriented surface $S$, 
the center $Z(\Qpi)$ is spanned by the set $\hat{\pi}\cap Z(\Qpi)$.
\end{conj}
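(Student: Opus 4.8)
The plan is to reduce to the two classes where a uniform method is available: closed surfaces, where the statement is precisely Etingof's Theorem~\ref{Etingof}, and open surfaces, where $\pi = \pi_1(S)$ is free. An open surface is an increasing union of finite-type open subsurfaces and $\Qpi$ is the corresponding inductive limit of Lie algebras, so the model case is $S = \Sigma_{g,1}$ and its limit $S = \Sigma_{\infty,1}$; I would attack this case by a filtration argument whose combinatorial core is the Lie algebra of oriented chord diagrams $\llg$ from the abstract.

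First, filter $\Qpi$ by the decreasing filtration $\{F^p\}_{p \ge 1}$ pulled back from the augmentation filtration of the completed group ring under a symplectic expansion $\theta\colon \Qpi \hookrightarrow |\T\,|$ in the sense of Goldman--Turaev. The Goldman bracket is compatible with this filtration and its top-order part contracts exactly one pair of letters against the symplectic form (``gluing a chord''), so the associated graded $\mathrm{gr}\,\Qpi$ is a graded Lie algebra, and the key geometric input is that it is isomorphic to $\llg$, the Lie algebra of cyclic words in $H_1(S;\mathbb{Q})$ with this chord-gluing bracket (and $\llg \cong \mathfrak{l}_\infty$ in the limit). Since $\pi$ is free it is residually nilpotent, so $\Qpi \hookrightarrow \widehat{\Qpi}$, the filtration is separated, and $\mathrm{gr}\,\widehat{\Qpi} \cong \llg$ as well.

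The heart of the matter is then to compute $Z(\llg)$. For $g = \infty$ I expect it to be spanned by the degree-zero part alone (the class of the constant loop): given any nonzero homogeneous class $x$ of positive degree, the abundance of handles lets one write down a generator $y$ supported on a handle disjoint from the ``letters'' appearing in $x$ for which $[x,y]\ne 0$, by a direct chord-gluing computation. For finite $g$ one must in addition allow a handful of symplectically invariant classes (the class of $\omega$ and its analogues), and the content of Conjecture~\ref{conjecture} is exactly that each of these extra central classes of $\mathrm{gr}$ is the leading term of an honest central loop --- which is manifest for $\Sigma_{g,1}$ (the boundary curve) but must be checked in general.

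Finally, transfer back: if $z \in Z(\Qpi)$, its image in $\widehat{\Qpi}$ is central, so its lowest-degree component $\bar z \in \mathrm{gr}\,\widehat{\Qpi} \cong \llg$ is central there; by the previous step $\bar z$ is represented by a central loop $c$ (a scalar multiple of $1$ when $g=\infty$), and subtracting a suitable multiple of $c$ raises the leading degree of $z$; iterating and using separatedness of the filtration shows $z$ lies in the span of $\hat\pi \cap Z(\Qpi)$. The step I expect to be the main obstacle is the computation of $Z(\llg)$: it is a genuinely combinatorial question about cyclic-word Lie algebras, and pinning down \emph{exactly} which classes are central for each finite $g$ --- and then matching them with geometric central loops --- is where the real work lies; a secondary difficulty is making the first reduction airtight for open surfaces with many ends or punctures, since capping or puncturing changes $\hat\pi \cap Z(\Qpi)$ and the kernels of the induced maps on $\Qpi$ must be controlled.
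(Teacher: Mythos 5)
First, note that the statement you are addressing is a \emph{conjecture}: the paper does not prove it. It is known for closed surfaces (Etingof, Theorem \ref{Etingof}), and the paper's own contribution is the single case $S=\Sigma_{\infty,1}$ (Theorem \ref{main}); the case $S=\Sigma_{g,1}$ (Conjecture \ref{conj-Sigma}) is explicitly left open. Your plan essentially reconstructs the paper's program — filter $\Qpi$ via a symplectic expansion, identify the (completion of the) associated graded with $\agminus=N(\T_1)$, compute the center there through the chord-diagram model, and transfer back — so as a strategy it is the right one, but the two places where you say "the real work lies" are precisely the places where the argument currently cannot be completed, and they are genuine gaps, not technical chores.

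Concretely: (1) the center of the graded side for \emph{finite} $g$ is not known. The chord-diagram map $a\colon\mathcal{C}_m\to(\ag)^{\fsp}_{(2m)}$ is an isomorphism only for $m\le g$ (Morita), so the computation $Z(\mathcal{C})=\prod_m\mathbb{Q}\Omega_m$ (Theorem \ref{Z(C)}) controls $Z(\agminus)$ only in a stable range (Theorem \ref{stabilized}); your assertion that for finite $g$ the center consists of "the class of $\omega$ and its analogues" plus a few invariant classes is exactly what is unproven outside that range. (2) Even granting the graded computation, your transfer-back step fails as stated: the filtration on $\Qpi$ is separated but $\Qpi$ is \emph{not} complete, so "subtract a central loop, raise the degree, iterate" only shows that $z$ is approximated by polynomials in $\zeta$ modulo arbitrarily deep filtration steps — this is the paper's Corollary \ref{approx}, and it does not yield that $z$ is a finite linear combination of central loops; this is precisely why Conjecture \ref{conj-Sigma} remains open. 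The paper circumvents both obstacles only for $\Sigma_{\infty,1}$, by a different final step: an element $u$ of the center lies in some $\Sigma_{h,1}\subset\Sigma_{g,1}$, is approximated both by a polynomial in the old boundary $\gamma$ and in the new boundary $\zeta$, and pairing with a loop $\alpha$ in the added handle (using \cite{KK} Theorem 1.2.2 and the explicit computation $N(\omega^m)([\alpha])\neq 0$) shows these approximations are incompatible unless the leading term vanishes, forcing $u\in\mathbb{Q}1$. Your secondary worry about general open surfaces (many ends, punctures, degenerate intersection form, no boundary curve playing the role of $\zeta$) is also a real gap: no analogue of the symplectic expansion machinery is set up there, and $\hat\pi\cap Z(\Qpi)$ itself is not identified. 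So the proposal is a reasonable research outline that matches the paper's approach, but it does not constitute a proof, and could not, since the statement is open.
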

Let $\Sigma_{g,1}$ be a compact connected oriented surface of 
genus $g$ with one boundary component, $\zeta$ the simple loop 
going around the boundary in the opposite direction. 
Then, for $S=\Sigma_{g,1}$, we have $\hat{\pi}\cap Z(\Qpi)
= \{\zeta^n; n \in \mathbb{Z}\}$ by Theorem \ref{Goldman}. 
Hence Conjecture \ref{conjecture} for $S=\Sigma_{g,1}$ is 
given as follows.
\begin{conj}\label{conj-Sigma}
$$
Z(\Qpi(\Sigma_{g,1})) = \bigoplus_{n\in\mathbb{Z}}\mathbb{Q}\zeta^n.
$$
\end{conj}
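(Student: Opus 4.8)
\medskip
\noindent\textbf{A proposed approach.}
The plan is to translate the question into chord diagrams via a Magnus expansion and solve it there. Fix $\pi=\pi_1(\Sigma_{g,1})$, a free group of rank $2g$; let $H=H_1(\Sigma_{g,1};\mathbb{Q})$ carry the intersection form $\mu$; let $\T=\T(H)$ be the completed tensor algebra and $\LL=\LL(H)\subset\T$ the completed free Lie algebra, with symplectic element $\omega=\sum_{i=1}^g[a_i,b_i]\in\LL$ for a symplectic basis $a_1,\dots,a_g,b_1,\dots,b_g$. Choose a symplectic expansion $\theta\colon\mathbb{Q}\pi\to\T$, i.e.\ a filtered algebra homomorphism inducing an isomorphism on completions with $\theta(\zeta)=\exp(\omega)$. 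It is known that $\theta$ induces an isomorphism of the completed Goldman Lie algebra $\widehat{\Qpi}(\Sigma_{g,1})$ onto the Lie algebra $\ag$ of oriented chord diagrams, which one may model on the completed space of cyclic tensors $\widehat{|\T(H)|}$ with the contraction (``necklace'') bracket, whose value on homogeneous elements has the shape
$$\big[\,|x_1\cdots x_p|,\,|y_1\cdots y_q|\,\big]=\sum_{k,l}\mu(x_k,y_l)\,\big|x_{k+1}\cdots x_{k-1}\,y_{l+1}\cdots y_{l-1}\big|$$
(indices cyclic; the output has length $p+q-2$, so the bracket lowers length by $2$). Under this identification the constant loop $1$ becomes the length-$0$ (``empty'') diagram, and $\zeta^n$ becomes the cyclic class of $\exp(n\omega)=\sum_{k\ge0}\tfrac{n^k}{k!}|\omega^{\otimes k}|$, a formal combination of the ``polygon'' diagrams $|\omega^{\otimes k}|$. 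Because $\mathbb{Q}\hat\pi$ is dense in $\widehat{\Qpi}$ and the bracket is continuous, any central element of $\Qpi(\Sigma_{g,1})$ has central image in $\ag$; so $Z(\Qpi(\Sigma_{g,1}))=Z(\ag)\cap\Qpi(\Sigma_{g,1})$, and the task splits into (i) computing $Z(\ag)$ and (ii) descending back to loops.

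\medskip
\noindent\textbf{The core computation.}
For (i) the target is $Z(\ag)=\Qomega$, the completed span of the polygons. The key observation is that the length-$2$ summand of $\ag$ is a copy of $\fsp(2g;\mathbb{Q})\cong S^2H$ whose adjoint (contraction) action on $\ag$ is precisely the natural $\fsp(2g;\mathbb{Q})$-action; hence any central element is $\fsp(2g;\mathbb{Q})$-invariant. Classical symplectic invariant theory then describes $\ag^{\fsp(2g;\mathbb{Q})}$ in each length — it is spanned by the polygons, with finite-genus exceptional invariants (of Pfaffian type) only in lengths comparable to $2g$ — and one checks that these exceptional invariants fail to be central by bracketing them with suitable longer chord diagrams and controlling the resulting cancellations. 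This yields $Z(\ag)=\Qomega$. For (ii) one descends by a downward induction on the length filtration: given $X\in Z(\Qpi(\Sigma_{g,1}))$, its image lies in $\Qomega$, and at each stage one subtracts a linear combination of the $\zeta^n$ matching the current leading polygon — a Vandermonde computation shows such combinations realize every polygon as a leading term — so that $X$ lies in the closure $\overline{\bigoplus_n\mathbb{Q}\zeta^n}$; a finiteness argument, to the effect that the only honest finite combinations of free homotopy classes whose Magnus images are formal $\omega$-series are the finite combinations of the $\zeta^n$, then gives $X\in\bigoplus_n\mathbb{Q}\zeta^n$, which is Conjecture~\ref{conj-Sigma}.

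\medskip
\noindent\textbf{The main obstacle.}
The decisive difficulty, which I expect to be the real obstruction, is the finite-genus analysis in step (i): disposing of the exceptional $\fsp(2g;\mathbb{Q})$-invariants and, more generally, detecting a non-polygon homogeneous element by bracketing against a test diagram while ruling out cancellations, requires genuine $\fsp(2g;\mathbb{Q})$-representation theory. This is exactly what is bypassed when $g=\infty$, the case actually carried out in this paper: there $\omega$ is an infinite sum and is not even an element of $\ag$, the space $\ag$ carries no nonzero $\fsp(\infty)$-invariant of positive length (equivalently, a positive-length element mentions only finitely many generators, so a short test diagram built from a generator pair $(a_{i_0},b_{i_0})$ occurring nowhere in it admits no cancellation and the fresh letters persist into the bracket), and one obtains cleanly $Z(\Qpi(\Sigma_{\infty,1}))=\mathbb{Q}\cdot1$. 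For finite $g$ no such shortcut is available; in addition the finiteness argument in step (ii), which is automatic once $Z(\ag)=\mathbb{Q}\cdot1$, becomes a genuine point to be settled. Resolving these is precisely the gap between Conjecture~\ref{conj-Sigma} and the theorem established here.

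\medskip
\noindent\textbf{An alternative.}
One could instead try to extend Etingof's proof of Theorem~\ref{Etingof} to the bordered surface: its character variety $\Hom(\pi,GL_N(\mathbb{C}))\cong GL_N(\mathbb{C})^{2g}$ carries the quasi-Poisson structure attached to a surface with one boundary component, and the aim would be to show that the only functions Poisson-commuting with all trace functions are functions of the conjugacy class of the boundary holonomy, then pass to $N\to\infty$. The obstruction is of the same flavor: handling the quasi-Poisson rather than Poisson bracket, and making the large-$N$ limit uniform in $g$.
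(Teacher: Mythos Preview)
First, note that Conjecture~\ref{conj-Sigma} is explicitly left open in the paper; there is no proof to compare against. What the paper proves is the stable-range statement Theorem~\ref{stabilized} (that $Z(\agminus)$ agrees with $\bigoplus_{m\geq 2}\mathbb{Q}N(\omega^m)$ modulo $N(\T_{2m(g)})$, with $m(g)=\lfloor(g-1)/4\rfloor+1$) and its consequence Corollary~\ref{approx}. Your outline is in the same spirit, and you rightly flag the two obstructions as the genuine gaps separating the conjecture from what is actually established.

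That said, your step~(i) contains a substantive error. You claim that classical invariant theory gives $(\ag)^{\fsp}$ ``spanned by the polygons, with finite-genus exceptional invariants \dots\ only in lengths comparable to $2g$.'' This is false: by Weyl's first fundamental theorem (Lemmas~\ref{Weyl-Morita} and~\ref{a-isom}), the space $(\ag)^{\fsp}_{(2m)}$ is spanned by \emph{all} oriented chord diagrams on $m$ chords --- a space $\mathcal{C}_m$ of rapidly growing dimension --- already in the stable range $m\leq g$; the cyclic tensors $N(\omega^m)=a(\Omega_m)$ form only a single line in each degree. The correct statement is that the \emph{center} of the chord-diagram Lie algebra $\mathcal{C}$ equals $\prod_{m\geq 2}\mathbb{Q}\Omega_m$ (Theorem~\ref{Z(C)}), and this is a nontrivial combinatorial computation carried out by bracketing against the test diagrams $D(a,b)$ (Lemmas~\ref{hodai1}--\ref{hodai2}), not a consequence of invariant theory. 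The unstable obstruction is then that for $m>g$ the surjection $a\colon\mathcal{C}_m\to(\ag)^{\fsp}_{(2m)}$ acquires a kernel, so the center of the quotient could in principle be strictly larger than the image of $Z(\mathcal{C})$; neither your sketch nor the paper rules this out.

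Your step~(ii) is also harder than indicated. Even granting $Z(\agminus)=\prod_{m\geq 2}\mathbb{Q}N(\omega^m)$, one must show that the only elements of $\Qpi$ with image in this set are finite combinations of the $\zeta^n$. A Vandermonde check shows that no finite combination $\sum_n a_n\zeta^n$ has image exactly $N(\omega^2)$, so one would additionally need to know that no element of $\Qpi$ whatsoever has image $N(\omega^2)$; this is a constraint on which formal $\omega$-series arise from finite sums of conjugacy classes, and you offer no mechanism for it. In short, your proposal is an outline with correctly identified --- and one misidentified --- gap, and the conjecture remains open.
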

This conjecture is still open. We shall study a surface 
of infinite genus, instead.
Gluing a compact connected oriented surface $\Sigma_{1,2}$ 
of genus $1$ with 
$2$ boundary components to the surface $\Sigma_{g,1}$ along 
the boundary, we obtain an embedding $i^g_{g+1}\colon
\Sigma_{g,1}\hookrightarrow 
\Sigma_{g+1,1}$. We define a connected oriented surface 
$\Sigma_{\infty,1}$ as the inductive limit of these embeddings. 
Our main result supports Conjecture \ref{conjecture}.
The conjecture holds for the surface $S=\Sigma_{\infty,1}$: 
\begin{thm}\label{main}
$$
Z(\Qpi(\Sigma_{\infty,1})) = \mathbb{Q}1.
$$
\end{thm}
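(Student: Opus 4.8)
The plan is to pass to the associated graded Lie algebra of $\Qpi(\Sigma_{\infty,1})$ for a natural filtration, identify it with the Lie algebra of oriented chord diagrams of infinite genus, and then compute the center of the latter by exploiting the action of the infinite symplectic Lie algebra.

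\emph{Reduction to the associated graded.} Each embedding $i^g_{g+1}$ exhibits $\pi_1(\Sigma_{g,1})$ as a retract of $\pi_1(\Sigma_{g+1,1})$, hence induces an injective homomorphism of Goldman Lie algebras, so $\Qpi(\Sigma_{\infty,1})=\varinjlim_g\Qpi(\Sigma_{g,1})$. Writing $\Qpi(\Sigma_{g,1})=\mathbb{Q}[\pi_1(\Sigma_{g,1})]/[\mathbb{Q}\pi_1,\mathbb{Q}\pi_1]$, let $F^\bullet$ be the filtration induced by powers of the augmentation ideal; the retractions make these filtrations compatible, and I put $F^p:=\varinjlim_g F^p\Qpi(\Sigma_{g,1})$. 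Then $\bigcap_p F^p=0$ (free groups are residually nilpotent), the Goldman bracket obeys $[F^p,F^q]\subseteq F^{p+q-2}$, and $\operatorname{gr}:=\bigoplus_p F^p/F^{p+1}=\bigoplus_{n\ge 0}|H_\infty^{\otimes n}|$, the cyclic tensor powers of $H_\infty=H_1(\Sigma_{\infty,1};\mathbb{Q})=\varinjlim_g H_1(\Sigma_{g,1};\mathbb{Q})$, with bracket induced by the symplectic form; this graded Lie algebra is the Lie algebra of oriented chord diagrams of infinite genus. By the usual leading-term argument (if $z$ is central then so is its leading term, since $[z,w]=0$ annihilates the leading term of the bracket), and because the constant loop has leading term a generator of the degree-$0$ summand $|H_\infty^{\otimes 0}|=\mathbb{Q}$ (which is visibly central in $\operatorname{gr}$), Theorem \ref{main} reduces to $Z(\operatorname{gr})=\mathbb{Q}$, concentrated in degree $0$.

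\emph{The center of the chord-diagram Lie algebra.} First, $Z(\operatorname{gr})$ is a graded subspace: if $z=\sum_n z_n$ is central then, for $w$ homogeneous of degree $k$, only $[z_N,w]$ contributes to $[z,w]$ in degree $N+k-2$ (here $N=\deg z_N$ is the top degree of $z$), so $z_N$ is central, and one iterates. Hence it suffices to prove $Z(\operatorname{gr})\cap|H_\infty^{\otimes N}|=0$ for $N\ge 1$. The crucial point is that the degree-$2$ summand $|H_\infty^{\otimes 2}|\cong S^2H_\infty$ is the finitary symplectic Lie algebra $\fsp(H_\infty)=\varinjlim_g\fsp\bigl(H_1(\Sigma_{g,1};\mathbb{Q})\bigr)$, acting on each $|H_\infty^{\otimes n}|$ by the natural action of $\fsp$ descended to cyclic words; this is a direct check from the splicing formula for the chord-diagram bracket. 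Therefore a central element of degree $N$ is $\fsp(H_\infty)$-invariant, and, passing from cyclic coinvariants to $C_N$-invariants (innocuous over $\mathbb{Q}$), it remains only to establish
\[
(H_\infty^{\otimes N})^{\fsp(H_\infty)}=0\qquad(N\ge 1).
\]

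\emph{Vanishing of the invariants.} This is where I expect the substance to lie, and where infinite genus is indispensable. Suppose $t$ is a nonzero invariant; it involves only finitely many symplectic basis vectors $a_1,b_1,\dots,a_g,b_g$. Since $H_\infty$ is infinite-dimensional, $\fsp(H_\infty)$ contains the operator sending $a_g\mapsto a_{g+1}$, $b_{g+1}\mapsto -b_g$ and killing all other basis vectors; applied to $t$ it replaces, slot by slot, every occurrence of $a_g$ by $a_{g+1}$, and the result can vanish only if $t$ contains no $a_g$. A symmetric operator eliminates every $b_g$, so $t\in H_{2g-2}^{\otimes N}$, and descending induction on $g$ (base case $H_0^{\otimes N}=0$, which uses $N\ge 1$) forces $t=0$. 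For a fixed finite genus these index-raising operators are unavailable; correspondingly the center of the finite-genus chord-diagram Lie algebra is strictly larger, generated by the class of the symplectic form — precisely the \emph{infinite} sum $\sum_k(a_k\otimes b_k-b_k\otimes a_k)$, which does not belong to $H_\infty^{\otimes 2}$. Combining the last two steps gives $Z(\operatorname{gr})=\mathbb{Q}$, and then the filtration argument yields $Z(\Qpi(\Sigma_{\infty,1}))=\mathbb{Q}1$.
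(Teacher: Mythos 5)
Your strategy is genuinely different from the paper's, and its novel part is sound: working directly with the infinite-genus associated graded and killing positive-degree central elements by the vanishing of $\fsp(H_\infty)$-invariants of finite tensors is correct (your rank-two operators do lie in the finitary symplectic algebra, and the no-cancellation argument for eliminating $a_g$, then $b_g$, works). However, as written the reduction step hides a real gap: the three assertions that the filtration on $\Qpi$ is separated, that $F^p/F^{p+1}\cong N(H^{\otimes p})$ (injectivity of the map from cyclic tensors onto the graded quotient, not just surjectivity), and that the induced bracket is the necklace bracket of Lemma \ref{[Nu,Nv]}, are not consequences of residual nilpotence of free groups --- that only controls the $I$-adic filtration on $\mathbb{Q}\pi$, and all three claims concern the quotient $\Qpi=\mathbb{Q}\pi/[\mathbb{Q}\pi,\mathbb{Q}\pi]$, where the commutator subspace must be shown to interact correctly with the filtration. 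These facts are exactly the content of \cite{KK} Theorem 1.2.1 (Theorem \ref{completion} here) together with Lemma \ref{characterization} and the exact sequence (\ref{N[]}): the Lie homomorphism $-N\theta$ has kernel $\mathbb{Q}1$ (giving separatedness, cf.\ (\ref{intersection})) and dense image, which identifies each graded quotient with $N(H^{\otimes p})$ and transports the bracket. You must either cite this or prove it; nothing in your write-up substitutes for it, and it is the single deep input of the whole approach.

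Granting that input, your route is complete and in several respects lighter than the paper's: you bypass the chord-diagram description of the $\fsp$-invariants, the computation $Z(\mathcal{C})=\prod_m\mathbb{Q}\Omega_m$ (Theorem \ref{Z(C)}), the stable-range statement (Theorem \ref{stabilized}), the Vandermonde approximation by polynomials in $\zeta$ (Corollary \ref{approx}), and the final geometric step using a loop $\alpha$ disjoint from $\gamma$ together with \cite{KK} Theorem 1.2.2. The trade-off is that the paper's finite-genus analysis produces the quantitative byproducts (Theorem \ref{stabilized}, Corollary \ref{approx}) supporting Conjecture \ref{conj-Sigma}, which your argument does not recover: at finite genus the graded (necklace) algebra has the central elements $N(\omega^m)$, so passing to the graded alone proves nothing there, and your observation that $\omega$ escapes $H_\infty^{\otimes 2}$ is precisely what makes the infinite-genus case degenerate. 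One terminological caution: the graded algebra $\bigoplus_n N(H^{\otimes n})$ is not the paper's ``Lie algebra of oriented chord diagrams'' --- the latter encodes the $\fsp$-\emph{invariants} $(\ag)^{\fsp}$ in a stable range --- though this slip does not affect your argument, which only uses cyclic tensors and the necklace bracket.
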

Our method of proof differs from Etingof's proof of Theorem \ref{Etingof},
and is based on our previous result \cite{KK} Theorem 1.2.1 
which connects the Goldman Lie algebra $\Qpi(\Sigma_{g,1})$ 
to Kontsevich's ``associative'' formal  symplectic geometry $\ag$. 
The notion of a {\it symplectic expansion} introduced by Massuyeau
\cite{Mas} plays a vital role there.
Theorem \ref{main} is deduced
from  a computation of the center of {\it the Lie algebra of oriented 
chord diagrams}, which is introduced in \S3.
This Lie algebra can be thought as the ``limit" of
the $\mathfrak{sp}$-invariants $(\ag)^{\mathfrak{sp}}$, $g\to \infty$,
where $\mathfrak{sp}=\mathfrak{sp}_{2g}(\mathbb{Q})$,
and its bracket is defined by a diagrammatic way.
Along the proof we also prove that a counterpart to Conjecture \ref{conj-Sigma} 
in the formal symplectic geometry side, is true in a stable range
(Theorem \ref{stabilized}).

\vspace{0.3cm}
This paper is organized as follows. In \S2, we recall symplectic
expansions, Kontsevich's ``associative" $\ag$, and our previous result.
In \S3, we give a description of the
$\mathfrak{sp}$-invariants $(\ag)^{\mathfrak{sp}}$ by
labeled chord diagrams. Looking at the bracket on $(\ag)^{\mathfrak{sp}}$,
we arrive at the definition of the Lie algebra of oriented chord diagrams.
We determine the center of this Lie algebra, and compute
the center of the ``associative" $\mathfrak{a}_g^-$,
an extension of $\mathfrak{a}_g$, in a stable range.
This gives a supporting evidence for Conjecture \ref{conj-Sigma},
since it enables us to approximate a given element of the center
of $\Qpi(\Sigma_{g,1})$ by a polynomial in $\zeta$
(Corollary \ref{approx}).
In \S4 we prove Theorem \ref{main}.
A rough idea is as follows. Any element of
$Z(\mathbb{Q}\hat{\pi}(\Sigma_{\infty,1}))$ lies in
$Z(\mathbb{Q}\hat{\pi}(\Sigma_{g,1}))$ for some $g$.
By the result in \S3, this element is approximated by a
polynomial in $\zeta$. But we easily see the image of any positive power of $\zeta$
by the inclusion $i^g_{\infty}\colon \Sigma_{g,1}\rightarrow \Sigma_{\infty,1}$
does not lie in $Z(\mathbb{Q}\hat{\pi}(\Sigma_{\infty,1}))$,
and conclude the element must be a multiple of the constant loop.
In \S5, we remark the bracket introduced in \S3 naturally
extends to the bracket on the space of {\it linear chord diagrams}.

\vspace{0.3cm}
\noindent \textbf{Acknowledgments.}
First of all, the authors wish to express their gratitude to
Alex Bene, since some of the notions in \S3.2 are
due to discussions with him, and Yasushi Kasahara,
who pointed out a crucial error in the previous version of this paper.
The authors also would like to thank Naoko Kamada
and Shigeyuki Morita for fruitful conversations.
The first-named author is grateful to Yasto Kimura for valuable comments
about chord diagrams,
and the second-named author would like to thank Moira Chas
for helpful discussions about the Goldman Lie algebra.
Finally, the bracket introduced in Appendix was
discovered during a discussion
with Robert Penner and Jorgen Andersen at QGM, Aarhus University. 
The authors would like to thank them for helpful suggestions, 
and QGM for kind hospitality.

The first-named author is partially supported by the Grant-in-Aid for
Scientific Research (A) (No.18204002) and (A) (No.22244005) from the
Japan Society for Promotion of Sciences. The second-named
author is supported by JSPS Research Fellowships
for Young Scientists (22$\cdot$4810).

\tableofcontents

\section{Symplectic expansion and formal symplectic geometry}\label{s:sympl}

In this section we fix an integer $g \geq 1$,
and simply write $\Sigma =\Sigma_{g,1}$. 
Choose a basepoint $*$ on the boundary $\partial\Sigma$. 
The fundamental group $\pi := \pi_1(\Sigma,*)$ is a free group 
of rank 2g. The set $\hat{\pi}=\hat{\pi}(\Sigma)$ is exactly 
the set of conjugacy classes in the group $\pi$. 
We denote by $\vert\ \vert\colon \mathbb{Q}\pi \to \Qpi$ 
the natural projection. 

\subsection{Symplectic expansion}

We begin by recalling the notion of 
a symplectic expansion introduced by Massuyeau \cite{Mas}.
Let $H := H_1(\Sigma;\mathbb{Q})$ be the first 
homology group of $\Sigma$. $H$ is naturally isomorphic to 
$H_1(\pi;\mathbb{Q})\cong \pi^{\rm abel}\otimes_{\mathbb{Z}}
\mathbb{Q}$, the first homology group of $\pi$. 
Here $\pi^{\rm abel} = \pi/[\pi,\pi]$  
is the abelianization of $\pi$. 
Under this identification, we write
$$
[x] := (x \bmod [\pi,\pi]) \otimes_{\mathbb{Z}}1 \in H,
\quad\mbox{for $x \in \pi$}.
$$
Let $\T$ be the completed tensor algebra generated by $H$. 
Namely $\T = \prod^\infty_{m=0} H^{\otimes m}$, where
$H^{\otimes m}$ is the tensor space of degree $m$. 
This is a complete Hopf algebra over $\mathbb{Q}$
whose coproduct $\Delta\colon \T\to\T\hotimes\T$ is given
by $\Delta(X)=X\hotimes 1+1\hotimes X$, $X\in H$. 
Here $\T\hotimes\T$ is the completed tensor product of the two $\T$'s. 
The algebra $\T$ has a decreasing filtration given by 
$$
\T_p := \prod_{m\geq p}H^{\otimes m}, \quad 
\mbox{for $p \geq 1$}.
$$
An element $u \in \T$ is called group-like if $\Delta u 
= u\hat{\otimes} u$. As is known, the set of group-like 
elements is a subgroup of the multiplicative group 
of the algebra $\T$. 
We regard $\zeta$ as a based loop with basepoint $*$. 
If we choose a symplectic generating system 
$\{\alpha_1, \beta_1, \dots, \alpha_g, \beta_g\}$ 
of the fundamental group $\pi$ as in Figure 1, we have $\zeta = \prod^g_{i=1}
\alpha_i\beta_i{\alpha_i}^{-1}{\beta_i}^{-1}$. Here, for $\gamma_1$ and 
$\gamma_2 \in \pi$, the product $\gamma_1\gamma_2 \in \pi$ 
is defined to be the based homotopy class of the loop 
traversing first $\gamma_1$ and then $\gamma_2$. 
The intersection form on the homology group $H$ defines 
the symplectic form 
$$
\omega = \sum^g_{i=1}A_iB_i - B_iA_i \in H^{\otimes 2},
$$
where $A_i = [\alpha_i]$ and $B_i = [\beta_i] \in H$.
Here and throughout this paper we often omit $\otimes$ to express tensors.
The exponential map $\exp\colon \T_1 \to \T$ is defined by $\exp(u) =
\sum^\infty_{k=0}(1/k!)u^k \in \T$ for $u \in \T_1$.

\begin{figure}
\begin{center}
\caption{symplectic generating system}

\vspace{0.2cm}

\unitlength 0.1in
\begin{picture}( 37.5000, 18.2000)(  2.0000,-18.7000)
%
\special{pn 13}%
\special{ar 3750 1070 200 800  0.0000000 6.2831853}%
%
\special{pn 13}%
\special{ar 1000 1070 800 800  1.5707963 4.7123890}%
%
\special{pn 13}%
\special{ar 1000 1070 300 300  0.0000000 6.2831853}%
%
\special{pn 13}%
\special{ar 2400 1070 300 300  0.0000000 6.2831853}%
%
\special{pn 8}%
\special{ar 1000 1070 500 500  1.5707963 6.2831853}%
%
\special{pn 8}%
\special{ar 2400 1070 500 500  1.5707963 6.2831853}%
%
\special{pn 8}%
\special{ar 2000 1070 500 500  1.5707963 3.1415927}%
%
\special{pn 8}%
\special{ar 3400 1070 500 500  1.5707963 3.1415927}%
%
\special{pn 8}%
\special{ar 1800 1070 500 500  1.5707963 3.1415927}%
%
\special{pn 8}%
\special{ar 3200 1070 500 500  1.5707963 3.1415927}%
%
\special{pn 8}%
\special{ar 2200 1070 500 500  1.5707963 3.1415927}%
%
\special{pn 8}%
\special{ar 3600 1070 500 500  1.5707963 3.1415927}%
%
\special{pn 8}%
\special{pa 2700 1070}%
\special{pa 2700 1038}%
\special{pa 2700 1006}%
\special{pa 2700 974}%
\special{pa 2700 942}%
\special{pa 2700 910}%
\special{pa 2700 878}%
\special{pa 2700 846}%
\special{pa 2700 814}%
\special{pa 2698 782}%
\special{pa 2698 750}%
\special{pa 2698 718}%
\special{pa 2700 686}%
\special{pa 2702 654}%
\special{pa 2706 622}%
\special{pa 2710 590}%
\special{pa 2716 558}%
\special{pa 2726 526}%
\special{pa 2736 494}%
\special{pa 2746 464}%
\special{pa 2760 434}%
\special{pa 2776 406}%
\special{pa 2794 380}%
\special{pa 2812 356}%
\special{pa 2834 332}%
\special{pa 2858 310}%
\special{pa 2882 288}%
\special{pa 2900 270}%
\special{sp 0.070}%
%
\special{pn 8}%
\special{pa 3100 1070}%
\special{pa 3100 1038}%
\special{pa 3100 1006}%
\special{pa 3100 974}%
\special{pa 3100 942}%
\special{pa 3100 910}%
\special{pa 3100 878}%
\special{pa 3100 846}%
\special{pa 3102 814}%
\special{pa 3102 782}%
\special{pa 3102 750}%
\special{pa 3102 718}%
\special{pa 3102 686}%
\special{pa 3100 654}%
\special{pa 3096 622}%
\special{pa 3090 590}%
\special{pa 3084 558}%
\special{pa 3076 526}%
\special{pa 3066 494}%
\special{pa 3054 464}%
\special{pa 3040 434}%
\special{pa 3026 406}%
\special{pa 3008 380}%
\special{pa 2988 356}%
\special{pa 2966 332}%
\special{pa 2944 310}%
\special{pa 2920 288}%
\special{pa 2900 270}%
\special{sp}%
%
\special{pn 8}%
\special{pa 1300 1070}%
\special{pa 1300 1038}%
\special{pa 1300 1006}%
\special{pa 1300 974}%
\special{pa 1300 942}%
\special{pa 1300 910}%
\special{pa 1300 878}%
\special{pa 1300 846}%
\special{pa 1300 814}%
\special{pa 1298 782}%
\special{pa 1298 750}%
\special{pa 1298 718}%
\special{pa 1300 686}%
\special{pa 1302 654}%
\special{pa 1306 622}%
\special{pa 1310 590}%
\special{pa 1316 558}%
\special{pa 1326 526}%
\special{pa 1336 494}%
\special{pa 1346 464}%
\special{pa 1360 434}%
\special{pa 1376 406}%
\special{pa 1394 380}%
\special{pa 1412 356}%
\special{pa 1434 332}%
\special{pa 1458 310}%
\special{pa 1482 288}%
\special{pa 1500 270}%
\special{sp 0.070}%
%
\special{pn 8}%
\special{pa 1700 1070}%
\special{pa 1700 1038}%
\special{pa 1700 1006}%
\special{pa 1700 974}%
\special{pa 1700 942}%
\special{pa 1700 910}%
\special{pa 1700 878}%
\special{pa 1700 846}%
\special{pa 1702 814}%
\special{pa 1702 782}%
\special{pa 1702 750}%
\special{pa 1702 718}%
\special{pa 1702 686}%
\special{pa 1700 654}%
\special{pa 1696 622}%
\special{pa 1690 590}%
\special{pa 1684 558}%
\special{pa 1676 526}%
\special{pa 1666 494}%
\special{pa 1654 464}%
\special{pa 1640 434}%
\special{pa 1626 406}%
\special{pa 1608 380}%
\special{pa 1588 356}%
\special{pa 1566 332}%
\special{pa 1544 310}%
\special{pa 1520 288}%
\special{pa 1500 270}%
\special{sp}%
%
\special{pn 8}%
\special{pa 1000 1570}%
\special{pa 3600 1570}%
\special{fp}%
%
\special{pn 13}%
\special{pa 1000 1870}%
\special{pa 3746 1870}%
\special{fp}%
%
\special{pn 13}%
\special{pa 1000 270}%
\special{pa 3746 270}%
\special{fp}%
%
\special{pn 20}%
\special{sh 1}%
\special{ar 3600 1570 10 10 0  6.28318530717959E+0000}%
\special{sh 1}%
\special{ar 3600 1570 10 10 0  6.28318530717959E+0000}%
%
\special{pn 8}%
\special{pa 1676 770}%
\special{pa 1700 670}%
\special{fp}%
\special{pa 1700 670}%
\special{pa 1726 770}%
\special{fp}%
%
\special{pn 8}%
\special{pa 3076 770}%
\special{pa 3100 670}%
\special{fp}%
\special{pa 3100 670}%
\special{pa 3126 770}%
\special{fp}%
%
\special{pn 8}%
\special{pa 926 546}%
\special{pa 1026 570}%
\special{fp}%
\special{pa 1026 570}%
\special{pa 926 596}%
\special{fp}%
%
\special{pn 8}%
\special{pa 2326 546}%
\special{pa 2426 570}%
\special{fp}%
\special{pa 2426 570}%
\special{pa 2326 596}%
\special{fp}%
%
\special{pn 8}%
\special{pa 3600 1700}%
\special{pa 3660 1784}%
\special{fp}%
\special{pa 3660 1784}%
\special{pa 3646 1682}%
\special{fp}%
\put(5.6000,-6.0500){\makebox(0,0)[lb]{$\alpha_1$}}%
\put(14.4500,-2.3000){\makebox(0,0)[lb]{$\beta_1$}}%
\put(21.0000,-5.3000){\makebox(0,0)[lb]{$\alpha_2$}}%
\put(28.2500,-2.2000){\makebox(0,0)[lb]{$\beta_2$}}%
\put(36.3500,-14.7500){\makebox(0,0)[lb]{$*$}}%
\put(34.2500,-17.8500){\makebox(0,0)[lb]{$\zeta$}}%
%
\special{pn 8}%
\special{ar 3350 460 100 100  6.2831853 6.2831853}%
\special{ar 3350 460 100 100  0.0000000 4.7123890}%
%
\special{pn 8}%
\special{pa 3450 460}%
\special{pa 3390 490}%
\special{fp}%
\special{pa 3450 460}%
\special{pa 3486 516}%
\special{fp}%
\end{picture}%
\end{center}
\end{figure}
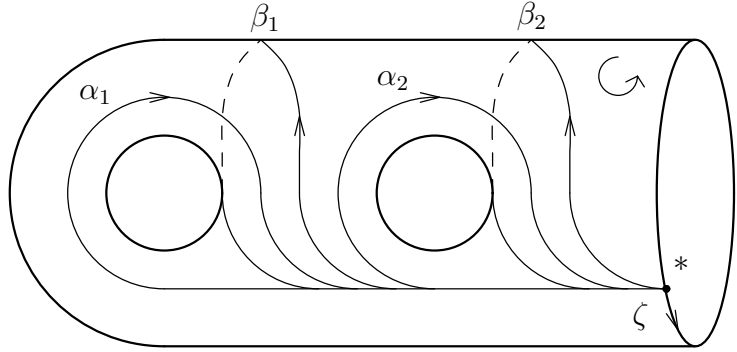

\begin{dfn}\label{sympl-exp}{\rm (Massuyeau \cite{Mas})} A symplectic
expansion
$\theta$ of the fundamental group $\pi$ of the surface $\Sigma$ is a map
$\theta\colon \pi\to \T$ satisfying the conditions
\begin{enumerate}
\item[{\rm (1)}] $\theta(x) \equiv 1 + [x] \pmod{\T_2}$ for any $x \in \pi$,
\item[{\rm (2)}] $\theta(xy) = \theta(x)\theta(y)$ for any $x$ and $y \in \pi$,
\item[{\rm (3)}] $\theta(x)$ is group-like for any $x \in \pi$,
\item[{\rm (4)}] $\theta(\zeta) = \exp(\omega)$.
\end{enumerate}
\end{dfn}
Symplectic expansions do exist \cite{Mas} Lemma 2.16, 
and they are infinitely many \cite{KK} Proposition 2.8.1.
Several constructions of a symplectic expansion
are known; {\it harmonic Magnus expansions} \cite{Ka2}
via a transcendental method, a construction in \cite{Mas}
via the {\it LMO functor}; also there is an elementary method to
associate a symplectic expansion with
any (not necessary symplectic) free generators
of $\pi$ \rm \cite{Ku}.

A map $\theta\colon \pi \to \T$ satisfying the conditions (1) and (2) is called 
a {\it Magnus expansion} of the free group $\pi$ \cite{Ka}. 

\subsection{Formal symplectic geometry}

We recall the ``associative" formal symplectic geometry 
$\ag$ introduced by Kontsevich \cite{Kon}. 
Let $N\colon \T \to \T_1$ be a linear map defined by 
$$
N\vert_{H^{\otimes n}} = \sum^{n-1}_{k=0}\nu^k, \quad
\mbox{$n \geq 1$},
$$
where $\nu$ is the cyclic permutation given by 
$X_1X_2\cdots X_n \mapsto X_2\cdots X_nX_1$ for $X_i \in H$, $n \geq 1$, 
and $N\vert_{H^{\otimes 0}}= 0$. 
By definition, a derivation on $\T$ is a linear map 
$D\colon \T\to\T$ 
satisfying the Leibniz rule:
$$
D(u_1u_2) = D(u_1)u_2 + u_1D(u_2),
$$
for $u_1, u_2 \in \T$.
Since $\T$ is freely generated by $H$ as a complete algebra, 
any derivation on $\T$ is uniquely determined by its values on $H$, 
and the space of derivations of $\T$ is identified with 
$\Hom(H, \T)$. By the Poincar\'e duality, 
$\T_1 \cong H\otimes\T$ is identified with $\Hom(H, \T)$: 
\begin{equation}\label{T1-Hom}
\T_1 \cong H\otimes\T\overset\cong\to \Hom(H,\T), \quad
X \otimes u \mapsto (Y \mapsto (Y \cdot X)u).
\end{equation}
Here $(\ \cdot\ )$ is the intersection pairing on $H = H_1(\Sigma;
\mathbb{Q})$.\par 
Let $\agminus = \deromega(\T)$ be the space of derivations 
on $\T$ killing the symplectic form $\omega$. 
In view of (\ref{T1-Hom}) any derivation D is written as 
$$
D = \sum^g_{i=1}B_i\otimes D(A_i) - A_i \otimes D(B_i). 
$$
Since $-D(\omega) = \sum^g_{i=1}[B_i,D(A_i)] - [A_i,D(B_i)]$, 
we have $\agminus = \Ker([\ ,\ ]\colon H \otimes\T\to \hat{T})$. 
It is easy to see $\Ker([\ ,\ ]) = N(\T_1)$ (see \cite{KK} 
Lemma 2.6.2 (4)).
Hence we can write
\begin{equation}
\agminus = \Ker([\ ,\ ]\colon H\otimes\T \to \T) = N(\T_1).
\label{ag-}
\end{equation}
The Lie subalgebra $\ag := N(\T_2)$ is nothing but (the completion of)
what Kontsevich \cite{Kon} calls $a_g$. 
By a straightforward computation, the bracket on $\agminus$ as derivations
is given as follows.
\begin{lem}\label{[Nu,Nv]} We have 
\begin{eqnarray*}
&&[N(X_1\cdots X_n), N(Y_1\cdots Y_m)] \\
&=& -\sum^n_{s=1}\sum^m_{t=1}(X_s\cdot Y_t)N(X_{s+1}\cdots X_nX_1\cdots
X_{s-1}Y_{t+1}\cdots Y_mY_1\cdots Y_{t-1})
\end{eqnarray*}
for $X_1,\ldots,X_n,Y_1,\ldots,Y_m \in H$. 
\end{lem}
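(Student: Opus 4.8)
The plan is to make the identification of $\agminus=N(\T_1)$ with a Lie algebra of derivations of $\T$ completely explicit, compute the commutator of two such derivations by hand, and then transport the answer back through the Poincar\'e-duality isomorphism (\ref{T1-Hom}). Throughout it is convenient to write, for letters $Z_1,\dots,Z_k\in H$, $Z_{a+1}\cdots Z_{a-1}$ for the length-$(k-1)$ word obtained by reading the cyclic word $Z_1\cdots Z_k$ once around starting just after $Z_a$, and, for $w\in\T_1$, $D_w$ for the derivation of $\T$ corresponding to $w$ under (\ref{T1-Hom}).

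The first step is to record that $N(X_1\cdots X_n)=\sum_{s=1}^{n}X_s\otimes(X_{s+1}\cdots X_{s-1})$ as an element of $H\otimes\T$, so that by the rule $X\otimes u\mapsto\bigl(Z\mapsto(Z\cdot X)u\bigr)$ of (\ref{T1-Hom}) the associated derivation $D:=D_{N(X_1\cdots X_n)}$ is given by
$$
D(Z)=\sum_{s=1}^{n}(Z\cdot X_s)\,X_{s+1}\cdots X_{s-1},\qquad Z\in H,
$$
and likewise for the derivation $E:=D_{N(Y_1\cdots Y_m)}$ attached to the second cyclic word. Since a derivation of $\T$ is determined by its values on the generators $H$, and since $[D,E]$ is again a derivation killing $\omega$, it suffices to check that $[D,E](Z)=D_w(Z)$ for every $Z\in H$, where $w$ denotes the right-hand side of the asserted formula; the equality $[D,E]=D_w$ then reads as the lemma under (\ref{T1-Hom}).

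For this check one expands $[D,E](Z)=D\bigl(E(Z)\bigr)-E\bigl(D(Z)\bigr)$. Substituting the formula for $E(Z)$ and applying the derivation $D$ by the Leibniz rule over the $m-1$ letters of each word $Y_{t+1}\cdots Y_{t-1}$ — replacing a letter $Y_r$ (with $r\neq t$) by $D(Y_r)$ — writes $D(E(Z))$ as a sum over triples $(t,r,s)$ whose summand is the spliced word $Y_{t+1}\cdots Y_{r-1}\,(X_{s+1}\cdots X_{s-1})\,Y_{r+1}\cdots Y_{t-1}$ with coefficient $(Z\cdot Y_t)(Y_r\cdot X_s)$, and symmetrically for $E(D(Z))$. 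On the other side, expanding $w=-\sum_{s,t}(X_s\cdot Y_t)\,N(X_{s+1}\cdots X_{s-1}Y_{t+1}\cdots Y_{t-1})$ by the same rules shows that $D_w(Z)$ is a sum over $(s,t)$ and over the splitting points of the combined cyclic word $X_{s+1}\cdots X_{s-1}Y_{t+1}\cdots Y_{t-1}$. Splitting at a $Y$-letter reproduces the $D(E(Z))$-sum term by term (after relabelling the indices and using the antisymmetry $Z\cdot Z'=-Z'\cdot Z$ of the intersection pairing to match signs), and splitting at an $X$-letter reproduces $-E(D(Z))$; it is consistent, and part of the verification, that the combined word contains neither $X_s$ nor $Y_t$, mirroring the fact that the inner Leibniz sums run over $r\neq t$ and $r\neq s$ respectively. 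Comparing the two expansions term by term yields $[D,E](Z)=D_w(Z)$.

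The only genuine work here is the bookkeeping — aligning the cyclic subwords and their splitting points across the three expansions and keeping track of the single sign that comes from the antisymmetry of $\cdot$. I expect no conceptual obstacle: the structural facts $\agminus=N(\T_1)$ and the isomorphism (\ref{T1-Hom}) are already in hand, and what remains is exactly the ``straightforward computation'' anticipated in the statement.
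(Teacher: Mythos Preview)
Your proposal is correct and is exactly the ``straightforward computation'' the paper alludes to without writing out: the paper gives no proof beyond that phrase, and what you outline --- identifying $N(X_1\cdots X_n)$ with the derivation $Z\mapsto\sum_s(Z\cdot X_s)X_{s+1}\cdots X_{s-1}$ via (\ref{T1-Hom}), expanding $[D,E](Z)$ by the Leibniz rule, and matching the resulting splice-words against the cyclic expansion of the right-hand side using the antisymmetry of the pairing --- is precisely that computation. The bookkeeping you describe (relabelling the cut index $q\leftrightarrow t$ and using $X_s\cdot Y_t=-Y_t\cdot X_s$ for the sign) is accurate, so there is nothing to add.
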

We introduce a bilinear map
$\mathcal{B}\colon H^{\otimes n}\times H^{\otimes m}
\to N(H^{\otimes (n+m-2)})$
by 
\begin{equation}
\mathcal{B}(X_1\cdots X_n, Y_1\cdots Y_m)
:= -(X_1\cdot Y_1)N(X_2\cdots X_nY_{2}\cdots Y_m)
\label{calB}
\end{equation}
for $X_s, Y_t \in H$. Then, Lemma \ref{[Nu,Nv]} is written as
\begin{equation}
[Nu, Nv] = \sum^{n-1}_{s=0}\sum^{m-1}_{t=0}\mathcal{B}(\nu^su, \nu^tv) 
= \mathcal{B}(Nu, Nv)
\label{[Nu,Nv]B}
\end{equation}
for $u \in H^{\otimes n}$ and $v \in H^{\otimes m}$.
\par

\subsection{``Completion'' of the Goldman Lie algebra}
The following result is proved in \cite{KK}.
\begin{thm}[\cite{KK} Theorem 1.2.1]
\label{completion}
For any symplectic expansion $\theta$, 
the map
$$
-N\theta\colon \mathbb{Q}\hat{\pi}\to N(\T_1) = \agminus, \quad
\pi \ni x\mapsto -N\theta(x) \in N(\T_1)
$$
is a well-defined Lie algebra homomorphism. The kernel is the subspace
$\mathbb{Q}1$ spanned by the constant loop $1$, and the image is dense in
$N(\T_1) =\agminus$ with respect to the $\T_1$-adic topology. 
\end{thm}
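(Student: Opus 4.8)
The plan is to establish, in order: well-definedness together with the target, the Lie homomorphism property, the kernel, and density of the image; the second item will be by far the hardest. For the first, since $N$ restricted to $H^{\otimes n}$ is the sum of the powers of the cyclic permutation $\nu$, it satisfies $N(uv)=N(vu)$ for all $u,v\in\T$; as $\theta$ is a monoid homomorphism, $N\theta(xyx^{-1})=N\bigl(\theta(x)^{-1}\theta(x)\theta(y)\bigr)=N\theta(y)$ for $x,y\in\pi$, so $N\theta$ is a class function and descends to $\Qpi$, with $N\theta(1)=N(1)=0$. Since $N$ annihilates $H^{\otimes 0}$ we have $N(\T)=N(\T_1)$, which equals $\agminus$ by (\ref{ag-}); so the image of $-N\theta$ lies in $\agminus$. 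This part is routine.

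\emph{The Lie homomorphism property.} I would route through the ``based'' version of the Goldman bracket. For $\alpha\in\hat\pi$ and $x\in\pi$, choose a generic immersed representative of $\alpha$ and a generic immersed based loop representing $x$, and let $\sigma(\alpha)(x)\in\mathbb{Q}\pi$ be the signed sum, over the transverse intersection points $p$ of $\alpha$ with $x$, of the based loop that runs along $x$ up to $p$, once around $\alpha$, and then along the remainder of $x$. Surgery arguments in the spirit of \cite{Go} should show that $\sigma(\alpha)$ is a derivation of $\mathbb{Q}\pi$, hence extends to a continuous derivation $\hat\sigma(\alpha)$ of the completed group ring $\widehat{\mathbb{Q}\pi}$; that $\vert\sigma(\alpha)(x)\vert=[\alpha,\vert x\vert]$ in $\Qpi$; and that $\alpha\mapsto\hat\sigma(\alpha)$ is a Lie algebra homomorphism from $\Qpi$ into $\Der(\widehat{\mathbb{Q}\pi})$ with the commutator bracket --- the ``based'' form of Goldman's Jacobi identity. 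A Magnus expansion \cite{Ka} induces a filtered algebra isomorphism $\hat\theta\colon\widehat{\mathbb{Q}\pi}\xrightarrow{\,\cong\,}\T$, and the key claim is then that
\begin{equation*}
\hat\theta\circ\hat\sigma(\alpha)\circ\hat\theta^{-1}\;=\;-N\theta(\alpha)
\qquad\text{as derivations of }\T\text{, for every }\alpha\in\hat\pi.
\end{equation*}
Granting this, $-N\theta$ is a conjugate of the Lie algebra homomorphism $\hat\sigma(-)$, hence itself a Lie algebra homomorphism, and it intertwines Goldman's bracket with the derivation bracket since $\vert\sigma(\alpha)(x)\vert=[\alpha,\vert x\vert]$. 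To prove the displayed identity, both sides being derivations of the free algebra $\T$, it suffices to compare them on $H$; there the left-hand side is computed by expanding $\theta$ of the surgered loops occurring in $\sigma(\alpha)(\gamma)$ as $\gamma$ runs over a symplectic generating system, and the right-hand side is read off through (\ref{T1-Hom}). Carrying this out uses all the conditions of Definition \ref{sympl-exp}; condition (4), $\theta(\zeta)=\exp(\omega)$, is indispensable already on the level of consistency: $\hat\theta\circ\hat\sigma(\alpha)\circ\hat\theta^{-1}$ kills $\hat\theta(\zeta)=\theta(\zeta)$ because $\alpha$ may be isotoped off the boundary (so $\sigma(\alpha)(\zeta)=0$), whereas $-N\theta(\alpha)\in\deromega(\T)$ kills $\omega$, and the two can agree only when $\theta(\zeta)=\exp(\omega)$. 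I expect this identification --- the bookkeeping of intersection points together with the rôle of the boundary loop --- to be the main obstacle.

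\emph{The kernel.} Plainly $\mathbb{Q}1\subseteq\Ker(-N\theta)$. For the reverse inclusion I would first compute $\Ker N$ in $\T$: in each degree $n\ge 1$, $\Ker\bigl(N\vert_{H^{\otimes n}}\bigr)=\mathrm{im}\bigl((1-\nu)\vert_{H^{\otimes n}}\bigr)$ is the non-$\nu$-invariant part (over $\mathbb{Q}$, the norm $N$ acts as multiplication by $n$ on the $\nu$-invariants and as $0$ elsewhere), and since $(1-\nu)(X_1\cdots X_n)=[X_1,\,X_2\cdots X_n]$ this gives $\Ker N=\mathbb{Q}1\oplus\overline{[\T,\T]}$. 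Transporting through $\hat\theta$, which preserves commutators and closures, and passing to $\Qpi=\mathbb{Q}\pi/[\mathbb{Q}\pi,\mathbb{Q}\pi]$, the assertion $\Ker(-N\theta)=\mathbb{Q}1$ reduces to the injectivity of the natural map from $\Qpi$ to the ``cyclic quotient'' $\widehat{\mathbb{Q}\pi}\big/\overline{[\widehat{\mathbb{Q}\pi},\widehat{\mathbb{Q}\pi}]}$. This in turn follows from residual nilpotence of the free group $\pi$ (Magnus): each nontrivial conjugacy class has a finite ``leading degree'' under $\theta$, so distinct conjugacy classes cannot become proportional in the completed cyclic quotient. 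This faithfulness statement is the one non-formal ingredient of the step.

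\emph{Density of the image.} Since $\hat\theta$ is a topological isomorphism and $\mathbb{Q}\pi$ is dense in $\widehat{\mathbb{Q}\pi}$, the set $\theta(\mathbb{Q}\pi)$ is dense in $\T$. The map $N\colon\T\to\T$ is continuous and filtered ($N(\T_p)\subseteq\T_p$), and its image $\agminus=N(\T_1)=\prod_{n\ge 1}(H^{\otimes n})^{\langle\nu\rangle}$ is a closed subspace of $\T_1$. Hence $\overline{N(\theta(\mathbb{Q}\pi))}\supseteq N\bigl(\overline{\theta(\mathbb{Q}\pi)}\bigr)=N(\T)=\agminus$, so the image of $-N\theta$ --- which is $N(\theta(\mathbb{Q}\pi))$ up to sign --- is dense in $\agminus$ for the $\T_1$-adic topology. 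This part is again essentially formal.
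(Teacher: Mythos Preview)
The paper does not prove this theorem; it is quoted from the authors' earlier work \cite{KK} (their Theorem 1.2.1), and the present paper simply states it and then draws corollaries. So there is no proof here to compare your proposal against.

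That said, your outline is consistent with the surrounding machinery and with what can be inferred about \cite{KK}. Your computation $\Ker(N\vert_{\T_1})=[\T,\T]$ is exactly the exact sequence established in the proof of Lemma~\ref{characterization} of this paper, and your density argument is the standard continuity one. The substantive step, as you correctly flag, is the conjugation identity $\hat\theta\circ\hat\sigma(\alpha)\circ\hat\theta^{-1}=-N\theta(\alpha)$ relating a ``based'' Goldman action on $\widehat{\mathbb{Q}\pi}$ to the symplectic derivation; this is indeed how \cite{KK} proceeds (their Theorem 1.2.2, invoked later in \S\ref{s:proof} here, is precisely a based-loop statement of this kind), and the symplectic condition $\theta(\zeta)=\exp(\omega)$ enters where you say. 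Your kernel argument via residual nilpotence of the free group is also the right shape. In short: there is nothing in \emph{this} paper to compare to, but your plan matches the approach of \cite{KK} as far as one can tell from the material recorded here.
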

By this theorem, we may regard the formal symplectic geometry 
$\agminus$ as a certain kind of completion of the Goldman Lie algebra
$\Qpi$.  We introduce a decreasing filtration of the Goldman Lie algebra 
$\Qpi$ defined by 
$$
\Qpi(p) := (N\theta)^{-1}N(\T_p), \quad 
\mbox{for $p \geq 1$}.
$$
Since $N(\T_p)$ is a Lie subalgebra of $\agminus =
N(\T_1)$,  the subspace $\Qpi(p)$ is also a Lie subalgebra of 
$\Qpi$.   Let $\theta'\colon \pi \to \T$ be another Magnus expansion 
which is not necessarily symplectic.
We denote by $[\T,\T]$ the derived ideal of $\T$ as a Lie algebra, 
in other words, $[\T,\T]$ is the vector subspace generated by the set
$\{uv-vu;\, u,v\in\T\}$. Let $\varepsilon \colon \T \to H^0=\mathbb{Q}$
be the augmentation.
\begin{lem}\label{characterization} Fix $p \geq 1$. For $u \in
\mathbb{Q}\pi$, the followings are equivalent.
\begin{enumerate}
\item[{\rm (1)}] $\vert u\vert \in \Qpi(p)$, namely, $N\theta(u) \in N(\T_p)$. 
\item[{\rm (2)}] $\theta(u) -\varepsilon(\theta(u)) \in \T_p+[\T,\T]$.
\item[{\rm (3)}] $\theta'(u) -\varepsilon(\theta'(u)) \in \T_p+[\T,\T]$.
\end{enumerate}
In particular, the filtration $\{\Qpi(p)\}^\infty_{p=1}$ 
is independent of the choice of a Magnus expansion.
\end{lem}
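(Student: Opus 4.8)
The plan is to prove (1)~$\Leftrightarrow$~(2) by a direct analysis of the operator $N$, then (2)~$\Leftrightarrow$~(3) by absorbing the freedom in the Magnus expansion into a change of coordinates on $\T$, and finally to read off the last assertion. For (1)~$\Leftrightarrow$~(2) I would begin with some linear algebra of $N$ on $\T$. For $n\geq 1$ the cyclic shift $\nu$ on $H^{\otimes n}$ satisfies $\nu^n=\mathrm{id}$ and $N|_{H^{\otimes n}}=\sum_{k=0}^{n-1}\nu^k$ is its norm operator, so diagonalizing $\nu$ (over $\overline{\mathbb{Q}}$, or a rank count over $\mathbb{Q}$) gives $\Ker\bigl(N|_{H^{\otimes n}}\bigr)=\mathrm{Im}(1-\nu)$. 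On the other hand, the identities $X_1\cdots X_aY_1\cdots Y_b-Y_1\cdots Y_bX_1\cdots X_a=(1-\nu^a)(X_1\cdots X_aY_1\cdots Y_b)$ and $[X_1,X_2\cdots X_n]=(1-\nu)(X_1\cdots X_n)$ show $[\T,\T]\cap H^{\otimes n}=\mathrm{Im}(1-\nu)$ for $n\geq 1$ (and $[\T,\T]\cap H^{\otimes 0}=0$); in particular $N$ annihilates $[\T,\T]$ degreewise and $\Ker N\cap\T_1=\prod_{n\geq 1}\mathrm{Im}(1-\nu)$. Now set $w:=\theta(u)-\varepsilon(\theta(u))\in\T_1$, so that $N\theta(u)=Nw$ and (2) reads $w\in\T_p+[\T,\T]$. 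If $w=w'+c$ with $w'\in\T_p$ and $c\in[\T,\T]$, then $Nw=Nw'\in N(\T_p)$, which is (1). Conversely, if $Nw=Nw'$ for some $w'\in\T_p$, then $w-w'\in\Ker N\cap\T_1$, so its homogeneous components in degrees $<p$ lie in $\mathrm{Im}(1-\nu)\subseteq[\T,\T]$ and its components in degrees $\geq p$ lie in $\T_p$; hence $w\in\T_p+[\T,\T]$, which is (2). The same argument applied verbatim to $\theta'$ shows that (3) is equivalent to $N\theta'(u)\in N(\T_p)$.

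For (2)~$\Leftrightarrow$~(3) I would transport $\theta$ to $\theta'$ by a filtered algebra automorphism of $\T$. Fix free generators $x_1,\dots,x_{2g}$ of $\pi$; since $\theta(x_i)-1\equiv[x_i]\pmod{\T_2}$ and $\{[x_i]\}$ is a $\mathbb{Q}$-basis of $H$, the elements $\theta(x_i)-1$ topologically freely generate $\T$: the filtered algebra homomorphism from the completed free associative algebra on $2g$ letters to $\T$ sending the $i$-th letter to $\theta(x_i)-1$ induces an isomorphism on the associated graded algebras, hence is an isomorphism. Consequently there is a unique continuous algebra automorphism $\Phi\colon\T\to\T$ with $\Phi(\theta(x_i))=\theta'(x_i)$ for all $i$; since $\mathrm{gr}\,\Phi=\mathrm{id}$ it is a filtered automorphism with $\Phi(\T_p)=\T_p$ for every $p$, and as an algebra automorphism it satisfies $\Phi(1)=1$, $\varepsilon\circ\Phi=\varepsilon$ and $\Phi([\T,\T])=[\T,\T]$. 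From $\Phi(\theta(x_i)^{\pm1})=\theta'(x_i)^{\pm1}$ and the multiplicativity of $\theta,\theta',\Phi$ one gets $\Phi\circ\theta=\theta'$ on all of $\mathbb{Q}\pi$. Since $\varepsilon(\theta(u))=\varepsilon(\theta'(u))$ (both equal $\sum_j c_j$ when $u=\sum_j c_j g_j$ with $g_j\in\pi$), the automorphism $\Phi$ carries $\theta(u)-\varepsilon(\theta(u))$ to $\theta'(u)-\varepsilon(\theta'(u))$ and maps the subspace $\T_p+[\T,\T]$ onto itself; therefore (2) holds if and only if (3) holds.

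Since (1)--(3) are now equivalent and (3) makes no reference to the symplectic property of the expansion, the subspace $\{u\in\mathbb{Q}\pi:\text{(1) holds}\}$, hence the filtration term $\Qpi(p)$ (its image under $\vert\ \vert\colon\mathbb{Q}\pi\to\Qpi$), does not depend on the Magnus expansion used to characterize it, which is the concluding assertion. I expect the two technical pressure points to be, first, pinning down $[\T,\T]\cap H^{\otimes n}=\mathrm{Im}(1-\nu)$ and using it to split $w-w'$ cleanly into a $\T_p$-part and a $[\T,\T]$-part in the converse half of (1)~$\Leftrightarrow$~(2), and second, making the construction of $\Phi$ watertight — its existence, filteredness, and the identity $\Phi\circ\theta=\theta'$. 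The change of coordinates $\Phi$ is the conceptual heart of (2)~$\Leftrightarrow$~(3), and once it is in hand that equivalence is immediate.
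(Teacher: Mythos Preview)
Your proof is correct and follows essentially the same route as the paper: both deduce (1)~$\Leftrightarrow$~(2) from the identification $\Ker(N|_{\T_1})=[\T,\T]$ (you via $\mathrm{Im}(1-\nu)$, the paper via the one-line identity $Nu-nu\in[\T,\T]$ for $u\in H^{\otimes n}$), and both obtain (2)~$\Leftrightarrow$~(3) from a filter-preserving algebra automorphism of $\T$ intertwining $\theta$ and $\theta'$ (which you construct explicitly, while the paper cites \cite{Ka} Theorem~1.3).
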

\begin{proof}
We have $N(X_1\cdots X_n)-nX_1\cdots X_n = 
\sum^n_{i=1}(X_i\cdots X_nX_1\cdots X_{i-1}- X_1\cdots X_n) = 
\sum^n_{i=2}[X_i\cdots X_n,X_1\cdots X_{i-1}] \in [\T,\T]$ 
for $X_i\in H$. This means $Nu -nu \in [\T,\T]$ for any $u \in
H^{\otimes n}$.
If $u \in \Ker N\cap H^{\otimes n}$, then $u =-\frac{1}{n}(Nu -nu)
\in  [\T,\T]$. Clearly $N[\T,\T] = 0$. Hence we have 
\begin{equation}
0\to [\T,\T] \to \T_1\overset{N}\to \T_1\quad\mbox{(exact)}.
\label{N[]}
\end{equation}
In particular, we have $(N\vert_{\T_1})^{-1}(N(\T_p)) = \T_p+[\T,\T]$, 
which implies the conditions (1) and (2) are equivalent.\par
As was proved in \cite{Ka} Theorem 1.3, 
there exists a filter-preserving algebra automorphism $U$ of $\T$ 
satisfying the equation $\theta' = U\circ\theta$. 
Then we have $U(\T_p+[\T,\T]) = \T_p+[\T,\T]$. 
Hence the conditions (2) and (3) are equivalent. 
This proves the lemma.
\end{proof}

Let $I\pi$ be the augmentation ideal of the group ring 
$\mathbb{Q}\pi$, i.e., the kernel of the augmentation
$\varepsilon\colon \mathbb{Q}\pi \to \mathbb{Q}$. 
It is easy to show $\Qpi(p) = \vert \mathbb{Q}1+(I\pi)^p\vert$, 
from which it also follows $\Qpi(p)$ is independent of 
a Magnus expansion. 
As a corollary of Theorem \ref{completion}, we have
\begin{eqnarray}
&& \bigcap^\infty_{p=1}\Qpi(p) = \Ker N\theta = \mathbb{Q}1,
\quad\mbox{and}
\label{intersection}\\
&& Z(\Qpi) \subset (N\theta)^{-1}Z(\agminus).
\label{incl-Z}
\end{eqnarray}
In view of this corollary (\ref{incl-Z}), we are led to consider
the center $Z(\agminus)$ of the Lie algebra $\agminus$. 
The subspace $N(H^{\otimes 2})$ of $\agminus$ is a Lie subalgebra 
naturally isomorphic to the Lie algebra of the symplectic group, 
$\fsp := \mathfrak{sp}_{2g}(\mathbb{Q})$. 
Hence $Z(\agminus)$ is included in the $\mathfrak{sp}$-invariants
$(\agminus)^{\fsp} = (\ag)^{\fsp}$, i.e., the tensors anihilated by
the action of $\mathfrak{sp}$.
Here we use the fact $H^{\fsp} = 0$. The subspace $(\ag)^{\fsp}$ is 
a Lie subalgebra of $\ag$. Thus we obtain
\begin{equation}
Z(\agminus) \subset Z((\ag)^{\fsp}). 
\label{incl-ag}
\end{equation}

\section{Lie algebra of oriented chord diagrams}\label{s:chord}

In this section, we describe the Lie algebra $(\ag)^{\fsp}$ 
in a stable range by introducing the Lie algebra of oriented chord
diagrams. 
Following Morita \cite{KM} \cite{MoS} \cite{MoG} and Kontsevich
\cite{Kon},  we make the symplectic form $\omega$ correspond to a
labeled chord. 

\subsection{The $\fsp$-invariant tensors}

Under the identification $\ag = N(\T_2)$, we denote 
$(\ag)_{(n)} := \ag \cap H^{\otimes n} = N(H^{\otimes n}) \subset 
H^{\otimes n}$ for $n \geq 2$. 
We begin by recalling the $\fsp$-invariant tensors in the space 
$H^{\otimes n}$. 
It is a classical result of Weyl \cite{W} ch.\ VI, \S1, that
the space of $\fsp$-invariant tensors in $H^{\otimes n}$ is 
zero if $n$ is odd, and generated by {\it linear chord 
diagrams} of $n/2$ chords if $n$ is even. 
Let $m$ be a positive integer. 
A {\it linear chord diagram} of $m$ chords is a decomposition 
of the set of vertices $\{1, 2, \dots, 2m\}$ into $m$
{\it unordered} pairs 
$\{\{i_1, j_1\}, \{i_2, j_2\}, \dots, \{i_m, j_m\}\}$. 
Further, a {\it labeled linear chord diagram} 
of $m$ chords $C$ is a set of $m$ {\it ordered} pairs 
$\{(i_1, j_1), (i_2, j_2), \dots, (i_m, j_m)\}$
satisfying the condition 
$\{i_1,\dots, i_m, j_1,\dots, j_m\} = \{1, 2, \dots, 2m\}$. 
We denote by $\overline{C}$ the underlying linear chord diagram of $C$,
$\overline{C} := \{\{i_1, j_1\}, \{i_2, j_2\}, \dots, \{i_m, j_m\}\}$. 
An $\fsp$-invariant tensor $a(C) \in H^{\otimes 2m}$ is defined by 
$$
a(C) := \begin{pmatrix}1&2&\cdots&2m-1&2m\\
i_1&j_1&\cdots&i_m&j_m
\end{pmatrix}(\omega^{\otimes m}) \in (H^{\otimes 2m})^{\fsp}.
$$
Let $C'$ be a labeled linear chord diagram obtained from $C$ by a 
single label change. Namely, we have 
$$
C' = \{(i_1, j_1),\dots, (i_{k-1}, j_{k-1}), (j_k, i_k), (i_{k+1},
j_{k+1}),\dots, (i_m, j_m)\}
$$
for a single $k$. Clearly we have $\overline{C'} = \overline{C}$ and $a(C') = -a(C)$. 
We denote by $\mathcal{LC}_m$ the $\mathbb{Q}$-linear space 
spanned by the labeled linear chord diagrams of $m$ chords 
modulo the linear subspace generated by the set 
$$
\{C+C'; \mbox{$C'$ is obtained from $C$ by a single
label change.}\},
$$
and call it {\it the space of oriented linear chord 
diagrams of $m$ chords}. We have a natural map
$$
a\colon \mathcal{LC}_m \to (H^{\otimes 2m})^{\fsp}, \quad
C \mapsto a(C).
$$
Now we have 
\begin{lem}\label{Weyl-Morita}
The map $a\colon \mathcal{LC}_m \to (H^{\otimes 2m})^{\fsp}$ is 
\begin{enumerate}
\item[{\rm (1)}] surjective for any $m \geq 1$, and
\item[{\rm (2)}] an isomorphism if and only if $m \leq g$.
\end{enumerate}
\end{lem}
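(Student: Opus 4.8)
The plan is to treat this as a statement in classical $\fsp$-invariant theory, keeping surjectivity (which is essentially Weyl's First Fundamental Theorem) entirely separate from the dimension count (which is the only place the genus $g$ enters). First, for part (1): by construction $a(C)$ is obtained from $\omega^{\otimes m}$ by a tensor-factor permutation, so the image of $a$ is precisely the $\mathbb{Q}$-linear span of the $\mathfrak{S}_{2m}$-orbit of $\omega^{\otimes m}$ in $H^{\otimes 2m}$. By Weyl's theorem quoted just before the lemma (\cite{W}, Ch.\ VI, \S1), this span is all of $(H^{\otimes 2m})^{\fsp}$, and $(H^{\otimes n})^{\fsp}=0$ for odd $n$; hence $a$ is surjective for every $m\geq 1$. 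Note this already gives $\dim (H^{\otimes 2m})^{\fsp}\leq(2m-1)!!$, the number of perfect matchings of $\{1,\dots,2m\}$.

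Next I would set up a dictionary with multilinear functions. Since $\omega$ is nondegenerate, $H\cong H^{*}$, so $H^{\otimes 2m}$ is $\fsp$-equivariantly identified with the space $\mathrm{Mult}(H^{\times 2m},\mathbb{Q})$ of multilinear forms, and under this identification $a(C)$ corresponds, up to a fixed overall sign, to
$$
F_C(v_1,\dots,v_{2m})\;=\;\prod_{(i,j)\in C}\omega(v_i,v_j).
$$
A single label change turns one factor $\omega(v_i,v_j)$ into $\omega(v_j,v_i)=-\omega(v_i,v_j)$, so $F_{C'}=-F_C$, exactly matching the defining relations of $\mathcal{LC}_m$; thus $C\mapsto F_C$ is well defined on $\mathcal{LC}_m$ and agrees with $a$ (up to the iso), and the lemma reduces to deciding when it is injective. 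The same relations collapse the $2^m$ labelings of a fixed underlying diagram $\overline{C}$ to $\pm$ one class, so $\{[C]:\overline{C}\text{ a perfect matching of }\{1,\dots,2m\}\}$ is a basis of $\mathcal{LC}_m$ and $\dim\mathcal{LC}_m=(2m-1)!!$.

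For the ``if'' direction, $m\leq g$: suppose $\sum_{\overline{C}}c_{\overline{C}}[C]\in\Ker a$, the sum over perfect matchings $\overline{C}=\{\{i_1,j_1\},\dots,\{i_m,j_m\}\}$ of $\{1,\dots,2m\}$. Fix one such matching $\overline{D}$ and evaluate the associated form at the point with $v_{i_k}=A_k$, $v_{j_k}=B_k$ for $k=1,\dots,m$ (legitimate since $m\leq g$ supplies enough symplectic basis vectors). Then $F_{C_{\overline{D}}}$ takes the value $\pm\prod_k\omega(A_k,B_k)=\pm1\neq0$, whereas for $\overline{C}\neq\overline{D}$ the matching $\overline{C}$ must contain a pair $\{i,j\}$ lying in two distinct pairs of $\overline{D}$ (otherwise $\overline{C}=\overline{D}$), so the vectors attached to $i$ and $j$ come from two different hyperbolic planes and $\omega$ vanishes there, giving $F_{C_{\overline{C}}}=0$. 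Hence $\pm c_{\overline{D}}=0$ for every $\overline{D}$, so $a$ is injective, and together with part (1) it is an isomorphism whenever $m\leq g$.

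For the ``only if'' direction, $m\geq g+1$: I would use the Pfaffian relation. For $v_1,\dots,v_{2g+2}\in H$ the antisymmetric $(2g+2)\times(2g+2)$ matrix $G(v)=(\omega(v_i,v_j))$ has rank $\leq\dim H=2g<2g+2$, so $\mathrm{Pf}\,G(v)\equiv0$; expanding the Pfaffian as a signed sum over perfect matchings of $\{1,\dots,2g+2\}$ and matching each Pfaffian sign with a label change of the corresponding chord (again: transposing the arguments of one $\omega$), these signs assemble into a well-defined element $\xi=\sum_{\overline{C}}\varepsilon(\overline{C})[C]\in\mathcal{LC}_{g+1}$ with $a(\xi)=0$, while $\xi\neq0$ since the $[C]$ form a basis. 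So $a$ fails to be injective (hence is not an isomorphism) for $m=g+1$. For $m>g+1$, fix any perfect matching $C_0$ of the remaining points $\{2g+3,\dots,2m\}$ and take $\eta=\sum_{\overline{C}}\varepsilon(\overline{C})[C\sqcup C_0]\in\mathcal{LC}_m$; this is nonzero, and $a(\eta)=0$ because in each evaluation the common factor $\prod_{(i,j)\in C_0}\omega(v_i,v_j)$ pulls out of the vanishing Pfaffian sum. This completes part (2). I expect the only genuinely delicate point to be the bookkeeping: fixing conventions for the isomorphism $H^{\otimes 2m}\cong\mathrm{Mult}(H^{\times 2m},\mathbb{Q})$, for the permutation defining $a(C)$, and for the Pfaffian sign, and checking that label changes, transpositions of $\omega$-arguments, and Pfaffian signs are mutually compatible; once the signs are pinned down, the argument rests on the two short evaluations above, with Weyl's FFT used only as a black box.
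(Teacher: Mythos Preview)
Your argument is correct. The paper itself does not prove this lemma: it simply attributes (1) and the ``if'' half of (2) to Weyl \cite{W} and the ``only if'' half of (2) to Morita \cite{MoG}, \cite{MoS}, without unpacking either reference. What you have written is therefore a genuinely more self-contained treatment. Your surjectivity step is exactly the paper's use of Weyl's First Fundamental Theorem; your evaluation argument for injectivity when $m\le g$ (plugging in $A_k,B_k$ along a fixed matching and observing that cross-pair insertions kill every other term) is the standard direct proof that the $(2m-1)!!$ invariants are independent in the stable range; and your Pfaffian relation for $m>g$ is precisely the classical Second Fundamental Theorem relation for $\fsp_{2g}$, which is also the mechanism underlying Morita's Proposition~4.1. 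The one place to be careful, as you note, is the sign bookkeeping: the identification $H^{\otimes 2m}\cong \mathrm{Mult}(H^{\times 2m},\mathbb{Q})$ via the Poincar\'e duality $X\mapsto (Y\mapsto Y\cdot X)$ sends $\omega$ to a multiple of the intersection pairing, and a single label change indeed corresponds to swapping the two arguments of one $\omega$-factor; once this is fixed, the Pfaffian sign is exactly absorbed by the label-change relation in $\mathcal{LC}_m$, so your element $\xi$ is well defined and nonzero. The advantage of your route is that it is elementary and explicit; the paper's route has the advantage of brevity and of locating the result within the existing literature on $\fsp$-invariants.
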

The assertion (1) and the ``if'' part of (2) are Weyl's result stated
above, while the ``only if'' part of (2) is due to Morita 
\cite{MoG} p.797,  Proposition 4.1. See also \cite{MoS}
p.361, Lemma 4.1. \par
Let $\nu \in \mathfrak{S}_{2m}$ be the cyclic permutation 
introduced in \S2.2
$$
\nu = \begin{pmatrix}1&2&3&\cdots&2m\\
2m&1&2&\cdots&2m-1
\end{pmatrix}.
$$
We denote by $Z_{2m}$ the cyclic subgroup generated by $\nu$ 
in the group $\mathfrak{S}_{2m}$. 
For a labeled linear chord diagram $C = \{(i_1, j_1), (i_2, j_2), 
\dots,(i_m, j_m)\}$, we define 
$$
\nu^s(C) := \{(\nu^s(i_1), \nu^s(j_1)), (\nu^s(i_2), \nu^s(j_2)), \dots,
(\nu^s(i_m), \nu^s(j_m))\}, \quad s \in \mathbb{Z}.
$$
Clearly we have $a(\nu^sC) = \nu^sa(C)$. 
This action descends to an action of $Z_{2m}$ on the space 
$\mathcal{LC}_m$. We define the space $\mathcal{C}_m$ as the 
$Z_{2m}$-invariants in $\mathcal{LC}_m$
$$
\mathcal{C}_m := (\mathcal{LC}_m)^{Z_{2m}},
$$
and call it {\it the space of oriented chord diagrams of $m$ chords}.
If $m=1$ and $C = \{(1,2)\}$, then 
$\nu(C) = -C \in \mathcal{LC}_1$. Hence we have $\mathcal{C}_1 = 0$. 
We define 
\begin{equation*}
\mathcal{C} := \prod^\infty_{m=2}\mathcal{C}_m.
\end{equation*}
A {\it labeled chord diagram} of $m$ chords $\mathcal{N}(C)$
is a collection of $2m$ labeled linear chord diagrams
$C,\nu(C),\ldots,\nu^{2m-1}(C)$ for some $C$ with $m$ chords
(to be more precise, we consider $\mathcal{N}(C)$ as an element of
the $2m$-th symmetric product of the set of labeled linear chord diagrams
of $m$ chords).
We also denote $N(C) := \sum^{2m-1}_{s=0}\nu^s(C) \in \mathcal{C}_m$. 
We have $\mathcal{N}(\nu(C))=\mathcal{N}(C)$ and
$$
a(N(C)) = N(a(C)) \in (H^{\otimes 2m})^{Z_{2m}} = N(H^{\otimes 2m})
= (\ag)_{(2m)}. 
$$

\begin{dfn}
\label{index}
For a labeled linear chord diagram $C$ of $m$ chords, define the index of $C$
as the cardinality of the set
$\{\overline{\nu^s(C)}; 0 \leq s \leq 2m-1\}$.
We also define the index of $\mathcal{N}(C)$ as the
index of one of the diagrams in $\mathcal{N}(C)$.
We say a diagram is of maximal index if its index is
twice the number of chords.
\end{dfn}

In general, the index of a chord diagram divides twice the number of chords.
Clearly the index of $\mathcal{N}(C)$ is independent of the choice
of a diagram.

\begin{lem}\label{odd}
Let $C$ be a labeled linear chord diagram of $m$ chords. Then $N(C)=0 
\in \mathcal{C}_m$ if and only if $C$ is of odd index. 
\end{lem}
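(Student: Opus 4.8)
The plan is to understand exactly how the cyclic generator $\nu$ iterated $d$ times acts on $C$ inside $\mathcal{LC}_m$, where $d$ is the index of $C$, and then to split the defining sum $N(C)=\sum_{s=0}^{2m-1}\nu^s(C)$ along cosets of $\langle\nu^d\rangle$.

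First I would pin down the structure of $\mathcal{LC}_m$: the labelings of a fixed underlying diagram $\overline{C}$ form a $(\mathbb{Z}/2)^m$-torsor, a single label change negates the class, so the span of all labelings of $\overline{C}$ modulo the single-label-change relations is one-dimensional, and the relations never mix labelings with different underlying diagrams. Hence $\mathcal{LC}_m$ has a basis indexed by underlying linear chord diagrams, and each labeled diagram maps to $\pm$ a basis element. In particular, by the orbit-stabilizer theorem for the $\mathbb{Z}/2m$-action, "$C$ has index $d$" says precisely that $\overline{\nu^0(C)},\dots,\overline{\nu^{d-1}(C)}$ are pairwise distinct and that $\nu^d$ fixes $\overline{C}$; so $\nu^0(C),\dots,\nu^{d-1}(C)$ are linearly independent in $\mathcal{LC}_m$ and any combination of them with all coefficients $\pm 1$ is nonzero.

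The heart of the argument is the identity $\nu^d(C)=(-1)^d\,C$ in $\mathcal{LC}_m$. Since $d\mid 2m$, the rotation $\rho:=\nu^d$ of the $2m$ vertices is a product of $d$ disjoint cycles of common length $k:=2m/d$, and $\rho$ preserves $\overline{C}$. I would classify the $\rho$-orbits of the $m$ chords: a chord joining two distinct vertex-cycles has an orbit of $k$ chords exhausting those two cycles (a ``free'' orbit), while a chord with both endpoints in one vertex-cycle forces $k$ even and fills that cycle with its $k/2$ ``diameters'' (a ``twisted'' orbit); one checks each vertex-cycle is used by exactly one chord-orbit, so if $F$ and $T$ count the free and twisted orbits then $2F+T=d$. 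Transporting orientations around an orbit, a free orbit contributes sign $+1$ and a twisted orbit contributes $-1$ to the scalar by which $\rho$ multiplies $C$, because going once around a twisted orbit returns the reversed orientation; hence $\nu^d(C)=(-1)^{T}C=(-1)^{2F+T}C=(-1)^d C$.

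Finally I would assemble the pieces. Writing $s=r+jd$ with $0\le r<d$ and $0\le j<k$ gives $\nu^{r+jd}(C)=(-1)^{jd}\nu^r(C)$, so $N(C)=\bigl(\sum_{j=0}^{k-1}(-1)^{jd}\bigr)\bigl(\sum_{r=0}^{d-1}\nu^r(C)\bigr)$. The second factor is nonzero by the first step. If $d$ is even the scalar is $k\ne0$; if $d$ is odd then $d\mid m$, so $k=2m/d$ is even and $\sum_{j=0}^{k-1}(-1)^j=0$. Therefore $N(C)=0$ if and only if $d$ is odd, as claimed. The step I expect to be the real obstacle is the chord-orbit classification and the orientation-transport bookkeeping in the third paragraph — in particular verifying that an internal chord must be a diameter filling its vertex-cycle, and that the twisted orbits are exactly what produce the sign $(-1)^d$, independently of the labeling chosen for $C$; the first and last paragraphs are routine once that is in hand.
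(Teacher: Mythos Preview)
Your proof is correct and shares the same overall skeleton as the paper's---both split $N(C)=\sum_{s=0}^{2m-1}\nu^s(C)$ along cosets of $\langle\nu^d\rangle$ and use that $\nu^0(C),\dots,\nu^{d-1}(C)$ lie over pairwise distinct underlying diagrams, hence are linearly independent in $\mathcal{LC}_m$. The difference is in how the crucial sign identity $\nu^d(C)=(-1)^dC$ is obtained. You prove it by a structural classification of the $\nu^d$-orbits on the set of chords (free versus twisted) together with an orientation-transport count giving $(-1)^T=(-1)^{2F+T}=(-1)^d$. The paper instead passes to the \emph{standard labeling} $\overline{C}^\flat$ (each chord oriented from its smaller to its larger endpoint) and observes in one line that $\nu(\overline{C}^\flat)=-\overline{\nu(C)}^\flat$: applying $\nu$ wraps exactly one endpoint around from position $1$ to position $2m$, reversing exactly one standard orientation. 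Iterating gives $\nu^s(\overline{C}^\flat)=(-1)^s\,\overline{\nu^s(C)}^\flat$, and since $\overline{\nu^{li+j}(C)}^\flat=\overline{\nu^{j}(C)}^\flat$ the double sum collapses immediately to $\bigl(\sum_i(-1)^{li}\bigr)\bigl(\sum_{j=0}^{l-1}(-1)^j\overline{\nu^j(C)}^\flat\bigr)$. Your orbit analysis is more work but yields an intrinsic, labeling-independent explanation of why the sign is $(-1)^d$; the paper's standard-label trick is shorter and completely sidesteps the chord-orbit and orientation-transport bookkeeping you flagged as the likely obstacle.
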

\begin{proof}
We denote by $\overline{C}^\flat$ the labeled linear chord diagram on the
underlying linear chord diagram $\overline{C}$ with {\it the standard label}, 
which means $i_k < j_k$ for any $k$.  
Clearly we have $N(C) = \pm N(\overline{C}^\flat)$. 
Let $l$ be the index of $C$. Then, 
since $\nu(\overline{C}^\flat) = -\overline{\nu(C)}^\flat 
\in \mathcal{C}_m$, we have
\begin{eqnarray*}
&& N(\overline{C}^\flat) = \sum^{2m-1}_{s=0}\nu^s(\overline{C}^\flat)
= \sum^{2m-1}_{s=0}(-1)^s\overline{\nu^s(C)}^\flat
= \sum^{(2m/l)-1}_{i=0}(-1)^{li}
\sum^{l-1}_{j=0}(-1)^j\overline{\nu^j(C)}^\flat\\
&=& \left\{
\begin{array}{ll}
\frac{\displaystyle 2m}{\displaystyle
l}\sum^{l-1}_{j=0}(-1)^j\overline{\nu^j(C)}^\flat&
\quad
\mbox{if $l$ is even,}\\ 0,& \quad \mbox{if $l$ is odd.}\\
\end{array}\right.
\end{eqnarray*}
Here we remark $\overline{\nu^j(C)}^\flat$, $0 \leq j \leq l-1$, are
linearly  independent. This proves the lemma.
\end{proof}
The following is a corollary of Lemma \ref{Weyl-Morita}.
\begin{lem}\label{a-isom} The map 
$a\colon \mathcal{C}_m \to (\ag)_{(2m)}^\fsp$, $N(C) \mapsto a(N(C))$, is 
\begin{enumerate}
\item[{\rm (1)}] surjective for any $m \geq 1$, and
\item[{\rm (2)}] an isomorphism if $m \leq g$.
\end{enumerate}
\end{lem}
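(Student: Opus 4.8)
The plan is to deduce both assertions formally from Lemma~\ref{Weyl-Morita} by passing to $Z_{2m}$-invariants on both sides. The bookkeeping rests on two identifications already recorded above. On the target side, $(\ag)_{(2m)} = N(H^{\otimes 2m}) = (H^{\otimes 2m})^{Z_{2m}}$, so that $(\ag)_{(2m)}^{\fsp} = (H^{\otimes 2m})^{\fsp} \cap (H^{\otimes 2m})^{Z_{2m}}$ is precisely the $Z_{2m}$-fixed subspace of $(H^{\otimes 2m})^{\fsp}$; here one uses that the $Z_{2m}$-action (cyclic permutation of tensor factors) and the $\fsp$-action (diagonal) commute. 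On the source side, $\mathcal{C}_m = (\mathcal{LC}_m)^{Z_{2m}}$ by definition, and a one-line averaging argument shows this subspace is spanned by the elements $N(C) = \sum_{s=0}^{2m-1}\nu^s(C)$: expanding any $v \in (\mathcal{LC}_m)^{Z_{2m}}$ in labeled linear chord diagrams $C_k$ gives $v = \tfrac{1}{2m}\sum_{s}\nu^s v = \tfrac{1}{2m}\sum_k c_k\,N(C_k)$. Finally, the map $a\colon \mathcal{LC}_m \to (H^{\otimes 2m})^{\fsp}$ of Lemma~\ref{Weyl-Morita} is $Z_{2m}$-equivariant since $a(\nu^s C) = \nu^s a(C)$, and it sends $N(C)$ to $N(a(C)) \in (\ag)_{(2m)}^{\fsp}$, so the map in the statement is well defined.

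For (1), I would start from an arbitrary $x \in (\ag)_{(2m)}^{\fsp}$ and use the surjectivity in Lemma~\ref{Weyl-Morita}(1) to pick $y \in \mathcal{LC}_m$ with $a(y) = x$. Then $\tfrac{1}{2m}N(y) \in \mathcal{C}_m$ is a linear combination of diagrams $N(C)$, and
$$
a\bigl(\tfrac{1}{2m}N(y)\bigr) = \tfrac{1}{2m}N(a(y)) = \tfrac{1}{2m}N(x) = x,
$$
the last equality because $x$ is already $Z_{2m}$-invariant. Hence $a\colon \mathcal{C}_m \to (\ag)_{(2m)}^{\fsp}$ is onto.

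For (2), assume $m \leq g$, so that $a\colon \mathcal{LC}_m \to (H^{\otimes 2m})^{\fsp}$ is a $Z_{2m}$-equivariant isomorphism by Lemma~\ref{Weyl-Morita}(2). Restricting an equivariant isomorphism to $Z_{2m}$-fixed subspaces again gives an isomorphism, and this identifies $\mathcal{C}_m = (\mathcal{LC}_m)^{Z_{2m}}$ with $\bigl((H^{\otimes 2m})^{\fsp}\bigr)^{Z_{2m}} = (\ag)_{(2m)}^{\fsp}$, which is exactly the claimed isomorphism (injectivity alone would already follow from injectivity of $a$ on $\mathcal{LC}_m$, with the image computed as above).

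Since every step is either a direct invocation of Lemma~\ref{Weyl-Morita} or a routine averaging/equivariance argument, I do not expect any genuine obstacle here; the only point needing a little care is the verification that $\mathcal{C}_m$ is spanned by the $N(C)$'s and that the two commuting group actions interact as stated, so that the assertions about the map $N(C)\mapsto a(N(C))$ really say what one wants.
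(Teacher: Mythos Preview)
Your proof is correct and follows essentially the same route as the paper's. The paper's proof is terser: it notes that $a\colon\mathcal{C}_m\to(\ag)_{(2m)}^{\fsp}$ is the restriction of $a\colon\mathcal{LC}_m\to(H^{\otimes 2m})^{\fsp}$, observes that $N\colon(H^{\otimes 2m})^{\fsp}\to(\ag)_{(2m)}^{\fsp}$ is surjective by $\fsp$-equivariance, and then invokes Lemma~\ref{Weyl-Morita}; your averaging over $Z_{2m}$ makes the same mechanism explicit.
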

\begin{proof}
The map $a\colon \mathcal{C}_m \to (\ag)_{(2m)}^\fsp$ is
the restriction of $a\colon \mathcal{LC}_m \to (H^{\otimes 2m})^{\fsp}$,
and the map $N\colon (H^{\otimes 2m})^{\fsp} \to (\mathfrak{a}_g)_{(2m)}^{\fsp}$
is surjective since the surjection $N\colon H^{\otimes 2m}\to (\mathfrak{a}_g)_{(2m)}$
is $\fsp$-equivariant. Hence the assertions follow from Lemma \ref{Weyl-Morita}.
\end{proof}

Hence the map $a\colon \mathcal{C} \to (\ag)^{\fsp}$ is an isomorphism 
in a stable range. So we compute the bracket on the Lie algebra 
$(\ag)^{\fsp}$ by means of the stable isomorphism $a$. 
Let $C$ and $C'$ be labeled chord diagrams given by 
$$
C = \{(i_1, j_1), (i_2, j_2), \dots, (i_m, j_m)\} \quad\mbox{and}\quad
C' = \{(a_1, b_1), (a_2, b_2), \dots, (a_l, b_l)\}.
$$
Then, by the formula (\ref{[Nu,Nv]B}), we have 
$$
[a(N(C)), a(N(C'))] = [N(a(C)), N(a(C'))] 
= \sum^{2m-1}_{s=0}\sum^{2l-1}_{t=0}\mathcal{B}(a(\nu^sC), 
a(\nu^tC')). 
$$
In order to describe $\mathcal{B}(a(C), a(C'))$, we define 
an amalgamation 
of two labeled linear chord diagrams as follows. 
We may assume $1 \in \{i_1, j_1\}\cap \{a_1, b_1\}$
without loss of generality.   
A labeled chord diagram of $m+l-1$ chords $C\ast C'$ is defined by 
$$
\{(x, y), (i_2-1, j_2-1), \dots, (i_m-1, j_m-1), 
(a_2+2m-2, b_2+2m-2), \dots, (a_l+2m-2, b_l+2m-2)\},
$$
where 
$$
(x, y) := \left\{
\begin{array}{ll}
(b_1+2m-2, j_1-1), &\quad\mbox{if $i_1=a_1=1$,}\\
(j_1-1, a_1+2m-2), &\quad\mbox{if $i_1=b_1=1$,}\\
(i_1-1, b_1+2m-2), &\quad\mbox{if $j_1=a_1=1$,}\\
(a_1+2m-2, i_1-1), &\quad\mbox{if $j_1=b_1=1$.}\\
\end{array}\right.
$$
We call it the {\it amalgamation} 
of the labeled linear chord diagrams $C$ and $C'$.  
Then we have 
$$
\mathcal{B}(a(C), a(C')) = Na(C\ast C').
$$
In fact, if we define a bilinear map 
$\mathcal{B}'\colon H^{\otimes 2}\times H^{\otimes 2}\to H^{\otimes 2}$
by $\mathcal{B}'(X_1X_2, Y_1Y_2) := -(X_1\cdot Y_1)X_2Y_2$, $X_i, Y_j 
\in H$, then we have $\mathcal{B}'(\omega, \omega) = -\omega$. 
This means $(x, y)$ should be $(b_1+2m-2, j_1-1)$ in the case 
$i_1=a_1=1$. Similar observations hold for the other three cases. 
Hence we obtain
\begin{lem}\label{a-bracket}
$$
[a(N(C)), a(N(C'))] 
= \sum^{2m-1}_{s=0}\sum^{2l-1}_{t=0}a(N((\nu^sC)\ast(\nu^tC'))).
$$
\end{lem}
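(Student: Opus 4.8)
The plan is to translate the algebraic identity in Lemma \ref{a-bracket} into a purely diagrammatic statement by unwinding the definitions of $\mathcal{B}$, $a$, and the amalgamation $C \ast C'$. The starting point is formula (\ref{[Nu,Nv]B}), which already gives $[Nu, Nv] = \sum_{s}\sum_{t}\mathcal{B}(\nu^s u, \nu^t v)$; applying this with $u = a(C) \in H^{\otimes 2m}$ and $v = a(C') \in H^{\otimes 2l}$, and using the $\fsp$-equivariance relation $a(\nu^s C) = \nu^s a(C)$ together with $a(N(C)) = N(a(C))$, immediately reduces everything to the single identity
$$
\mathcal{B}(a(C), a(C')) = N a(C \ast C').
$$
So the real content is this one equation, and the rest is bookkeeping over the double sum.

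First I would verify the model case $\mathcal{B}'(\omega,\omega) = -\omega$, where $\mathcal{B}'(X_1 X_2, Y_1 Y_2) = -(X_1 \cdot Y_1) X_2 Y_2$: writing $\omega = \sum_i (A_i B_i - B_i A_i)$ and pairing the first tensor factors, the only surviving terms are those where an $A$ meets the dual $B$ (or vice versa), and a short computation collapses the double sum back to $-\omega$. This is the combinatorial heart of why contracting along a single chord labeled by $\omega$ produces another chord labeled by $\omega$. Next I would recall from (\ref{calB}) that $\mathcal{B}(X_1\cdots X_n, Y_1\cdots Y_m) = -(X_1 \cdot Y_1) N(X_2 \cdots X_n Y_2 \cdots Y_m)$, i.e. $\mathcal{B}$ contracts the first slots of the two tensors and then cyclically symmetrizes; the definition of $a(C)$ as $\omega^{\otimes m}$ with tensor slots permuted according to $C$ means that slot $1$ of $a(C)$ carries one endpoint of the chord of $C$ containing the vertex $1$, and likewise for $C'$. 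The assumption $1 \in \{i_1,j_1\}\cap\{a_1,b_1\}$ arranges that we are contracting precisely the $\omega$-chord through vertex $1$ in each diagram against the other, which by $\mathcal{B}'(\omega,\omega)=-\omega$ fuses these two chords into a single new $\omega$-chord while the remaining $m-1$ and $l-1$ chords survive untouched. The four cases in the definition of $(x,y)$ are exactly the four ways the oriented endpoints of the two fused chords can be glued, and in each case the sign and the relabeling by the shifts $-1$ and $+2m-2$ are forced by matching the slot that was removed; the overall result is $a(C\ast C')$, and the leftover $N$ comes from the cyclic symmetrization built into $\mathcal{B}$.

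The main obstacle I expect is purely notational rather than conceptual: keeping the slot-indices straight through the amalgamation. One must check that after deleting the two contracted slots (slot $1$ from $a(C)$, slot $1$ from $a(C')$), the reindexing $i_k \mapsto i_k - 1$ for the $C$-part and $a_k \mapsto a_k + 2m - 2$ for the $C'$-part really does produce a bijection of the remaining $2(m+l-1)$ slots onto $\{1,\dots,2(m+l-1)\}$, and that the surviving endpoint of each fused chord lands in the correct position with the correct sign in all four cases of $(x,y)$. This is a finite and mechanical verification — essentially a careful inspection of the four diagrams — but it is where all the care is needed; once it is done, substituting back into the double sum of (\ref{[Nu,Nv]B}) and collecting terms yields exactly the asserted formula
$$
[a(N(C)), a(N(C'))] = \sum_{s=0}^{2m-1}\sum_{t=0}^{2l-1} a\bigl(N((\nu^s C)\ast(\nu^t C'))\bigr),
$$
completing the proof.
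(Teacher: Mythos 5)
Your proposal is correct and follows essentially the same route as the paper: reduce via formula (\ref{[Nu,Nv]B}) and the equivariance $a(\nu^sC)=\nu^sa(C)$ to the single identity $\mathcal{B}(a(C),a(C'))=Na(C\ast C')$, then establish it through the model computation $\mathcal{B}'(\omega,\omega)=-\omega$ and the case-by-case check of the four gluings $(x,y)$ with their index shifts. The bookkeeping you flag as the main obstacle is precisely the step the paper also treats by inspection of the four cases.
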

Here it should be remarked that the right hand side in the above equality does {\it not} 
depend on the genus $g$. Since the map $a$ is a stable isomorphism,
the whole of the maps $a$ 
induces a Lie algebra structure on the space $\mathcal{C}$. 
The bracket is given by 
\begin{equation}
[N(C), N(C')] =
\sum^{2m-1}_{s=0}\sum^{2l-1}_{t=0}N((\nu^sC)\ast(\nu^tC')).
\label{N(C)-bracket}
\end{equation}
From Lemma \ref{a-bracket} the map $a: \mathcal{C} \to {\ag}^{\fsp}$ is a
Lie algebra homomorphism for each $g \geq 1$. In the next subsection, we
will  give a diagrammatic description of 
the Lie algebra $\mathcal{C}$, 
which will enable us to compute the center $Z((\ag)^{\fsp})$ 
in a stable range. 

\subsection{The center of the Lie algebra of oriented chord diagrams}
In this subsection we give a diagrammatic description of the Lie algebra structure
on $\mathcal{C}=\prod_m \mathcal{C}_m$ introduced by the formula (\ref{N(C)-bracket})
and compute its center.

We first recall the description of labeled linear chord diagrams by picture.
Let $C=\{ (i_1,j_1),(i_2,j_2),\ldots,(i_m,j_m)\}$ be a
labeled linear chord diagram of $m$ chords. Fix a closed interval
on the $x$-axis in the $xy$-plane and call it {\it the core} of the diagram.
Put $2m$ distinct points on the interior of the core, and for each
$1\le k \le m$, draw an oriented simple path, called a {\it labeled chord},
in the upper half plane from the $i_k$-th point
(with respect to the $x$-coordinate) to the $j_k$-th point.
Hereafter we identify a labeled linear chord diagram with its picture.
For example, the picture of $C=\{ (1,2),(3,5),(4,6)\}$ is as in
Figure 2.

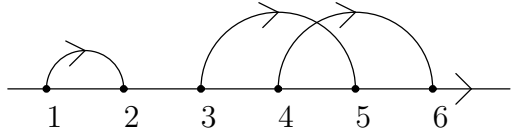
\begin{figure}
\begin{center}
\caption{labeled linear chord diagram}

\vspace{0.2cm}

\unitlength 0.1in
\begin{picture}( 26.0000,  5.3000)(  4.0000,-14.8000)
%
\special{pn 20}%
\special{sh 1}%
\special{ar 600 1400 10 10 0  6.28318530717959E+0000}%
\special{sh 1}%
\special{ar 1000 1400 10 10 0  6.28318530717959E+0000}%
\special{sh 1}%
\special{ar 1400 1400 10 10 0  6.28318530717959E+0000}%
\special{sh 1}%
\special{ar 1800 1400 10 10 0  6.28318530717959E+0000}%
\special{sh 1}%
\special{ar 2200 1400 10 10 0  6.28318530717959E+0000}%
\special{sh 1}%
\special{ar 2600 1400 10 10 0  6.28318530717959E+0000}%
%
\special{pn 8}%
\special{ar 800 1400 200 200  3.1415927 6.2831853}%
%
\special{pn 8}%
\special{ar 1800 1400 400 400  3.1415927 6.2831853}%
%
\special{pn 8}%
\special{ar 2200 1400 400 400  3.1415927 6.2831853}%
%
\special{pn 8}%
\special{pa 2200 1000}%
\special{pa 2100 950}%
\special{fp}%
%
\special{pn 8}%
\special{pa 1800 1000}%
\special{pa 1700 950}%
\special{fp}%
%
\special{pn 8}%
\special{pa 800 1200}%
\special{pa 700 1150}%
\special{fp}%
\put(6.0000,-16.0000){\makebox(0,0)[lb]{$1$}}%
\put(10.0000,-16.0000){\makebox(0,0)[lb]{$2$}}%
\put(14.0000,-16.0000){\makebox(0,0)[lb]{$3$}}%
\put(18.0000,-16.0000){\makebox(0,0)[lb]{$4$}}%
\put(22.0000,-16.0000){\makebox(0,0)[lb]{$5$}}%
\put(26.0000,-16.0000){\makebox(0,0)[lb]{$6$}}%
%
\special{pn 8}%
\special{pa 2200 1000}%
\special{pa 2120 1080}%
\special{fp}%
%
\special{pn 8}%
\special{pa 1800 1000}%
\special{pa 1720 1080}%
\special{fp}%
%
\special{pn 8}%
\special{pa 800 1200}%
\special{pa 720 1280}%
\special{fp}%
%
\special{pn 8}%
\special{pa 400 1400}%
\special{pa 3000 1400}%
\special{fp}%
%
\special{pn 8}%
\special{pa 2800 1400}%
\special{pa 2720 1320}%
\special{fp}%
%
\special{pn 8}%
\special{pa 2800 1400}%
\special{pa 2720 1480}%
\special{fp}%
\end{picture}%
\end{center}
\end{figure}

We next recall a diagrammatic description of the labeled chord diagrams
introduced in \S3.1. In this subsection,
a {\it labeled chord diagram} of $m$ chords is a diagram in the $xy$-plane
consisting of a circle, called {\it the core}, $2m$ vertices on the core,
and $m$ oriented simple paths, called {\it labeled chords}, connecting
two vertices in the disk which bounds the core, such that the ends
of the labeled chords exhaust the $2m$ vertices. We give the orientation
to the core coming from that of the disk.

Given a labeled linear chord diagram $C$, we can produce
a labeled chord diagram by connecting the two ends of the core
by a simple path in the upper half plane avoiding the $m$ chords.
We call this operation {\it the closing of $C$}. For example, the closing
of the diagram in Figure 2 is as in Figure 3.

\begin{figure}
\begin{center}
\caption{the closing}

\vspace{0.2cm}

\unitlength 0.1in
\begin{picture}( 16.6200, 16.0600)(  4.0000,-20.0600)
%
\special{pn 8}%
\special{ar 1200 1204 800 800  0.0000000 6.2831853}%
%
\special{pn 20}%
\special{sh 1}%
\special{ar 1890 804 10 10 0  6.28318530717959E+0000}%
%
\special{pn 20}%
\special{sh 1}%
\special{ar 1890 1604 10 10 0  6.28318530717959E+0000}%
%
\special{pn 20}%
\special{sh 1}%
\special{ar 520 1604 10 10 0  6.28318530717959E+0000}%
%
\special{pn 20}%
\special{sh 1}%
\special{ar 520 804 10 10 0  6.28318530717959E+0000}%
%
\special{pn 20}%
\special{sh 1}%
\special{ar 1200 404 10 10 0  6.28318530717959E+0000}%
%
\special{pn 20}%
\special{sh 1}%
\special{ar 1200 2004 10 10 0  6.28318530717959E+0000}%
%
\special{pn 8}%
\special{ar 1546 1802 400 400  2.6131589 5.7556517}%
%
\special{pn 8}%
\special{pa 1200 404}%
\special{pa 520 1604}%
\special{fp}%
%
\special{pn 8}%
\special{pa 520 804}%
\special{pa 1880 804}%
\special{fp}%
%
\special{pn 8}%
\special{pa 1400 804}%
\special{pa 1480 724}%
\special{fp}%
%
\special{pn 8}%
\special{pa 1400 804}%
\special{pa 1480 884}%
\special{fp}%
%
\special{pn 8}%
\special{pa 832 1042}%
\special{pa 800 934}%
\special{fp}%
%
\special{pn 8}%
\special{pa 832 1042}%
\special{pa 940 1012}%
\special{fp}%
%
\special{pn 8}%
\special{pa 1360 1460}%
\special{pa 1250 1452}%
\special{fp}%
\special{pa 1360 1460}%
\special{pa 1316 1564}%
\special{fp}%
%
\special{pn 8}%
\special{pa 2000 1140}%
\special{pa 2062 1234}%
\special{fp}%
\special{pa 2000 1140}%
\special{pa 1932 1230}%
\special{fp}%
\put(20.1000,-16.6000){\makebox(0,0)[lb]{$p$}}%
\end{picture}%
\end{center}
\end{figure}
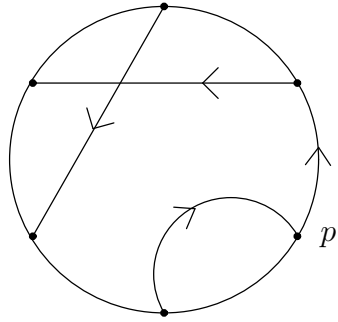

Conversely, given a vertex $p$ of a labeled chord diagram $D$,
we can produce a labeled linear chord diagram by cutting the core at
a little short of $p$ and embed the result into the $xy$-plane
so that the cut core is included in the $x$-axis and the labeled
chords are included in the upper half plane. We call
this operation {\it the cut of $D$ at $p$}, and denote the result
by $C(D,p)$. For example,
the cut of the diagram at $p$ in Figure 3 is as in Figure 4.

\begin{figure}
\begin{center}
\caption{the cut $C(D,p)$}

\vspace{0.2cm}

\unitlength 0.1in
\begin{picture}( 26.0000, 11.4600)(  4.0000,-14.8000)
%
\special{pn 20}%
\special{sh 1}%
\special{ar 600 1400 10 10 0  6.28318530717959E+0000}%
\special{sh 1}%
\special{ar 1000 1400 10 10 0  6.28318530717959E+0000}%
\special{sh 1}%
\special{ar 1400 1400 10 10 0  6.28318530717959E+0000}%
\special{sh 1}%
\special{ar 1800 1400 10 10 0  6.28318530717959E+0000}%
\special{sh 1}%
\special{ar 2200 1400 10 10 0  6.28318530717959E+0000}%
\special{sh 1}%
\special{ar 2600 1400 10 10 0  6.28318530717959E+0000}%
%
\special{pn 8}%
\special{pa 1800 1000}%
\special{pa 1700 950}%
\special{fp}%
\put(6.0000,-16.0000){\makebox(0,0)[lb]{$1$}}%
\put(10.0000,-16.0000){\makebox(0,0)[lb]{$2$}}%
\put(14.0000,-16.0000){\makebox(0,0)[lb]{$3$}}%
\put(18.0000,-16.0000){\makebox(0,0)[lb]{$4$}}%
\put(22.0000,-16.0000){\makebox(0,0)[lb]{$5$}}%
\put(26.0000,-16.0000){\makebox(0,0)[lb]{$6$}}%
%
\special{pn 8}%
\special{pa 1800 1000}%
\special{pa 1720 1080}%
\special{fp}%
%
\special{pn 8}%
\special{pa 400 1400}%
\special{pa 3000 1400}%
\special{fp}%
%
\special{pn 8}%
\special{pa 2800 1400}%
\special{pa 2720 1320}%
\special{fp}%
%
\special{pn 8}%
\special{pa 2800 1400}%
\special{pa 2720 1480}%
\special{fp}%
%
\special{pn 8}%
\special{ar 1600 1400 1000 1000  3.1415927 6.2831853}%
%
\special{pn 8}%
\special{ar 1400 1400 400 400  3.1415927 6.2831853}%
%
\special{pn 8}%
\special{ar 1800 1400 400 400  3.1415927 6.2831853}%
%
\special{pn 8}%
\special{pa 1400 1000}%
\special{pa 1300 950}%
\special{fp}%
%
\special{pn 8}%
\special{pa 1400 1000}%
\special{pa 1320 1080}%
\special{fp}%
%
\special{pn 8}%
\special{pa 1586 402}%
\special{pa 1676 334}%
\special{fp}%
%
\special{pn 8}%
\special{pa 1586 402}%
\special{pa 1680 466}%
\special{fp}%
\end{picture}%
\end{center}
\end{figure}
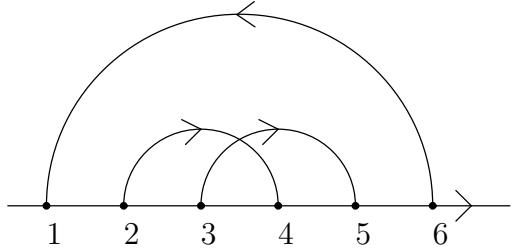

Let $D$ be a labeled chord diagram of $m$ chords, and $p_0$ a vertex of $D$.
The collection of the cuts $C(D,p)$, where $p$ runs through all the vertices
of $D$, can be written as $\nu^kC(D,p_0)$, $0\le k\le 2m-1$.
This implies the two notions of labeled chord diagrams given in \S3.1 and
here are essentially the same.
The sum $\sum_p C(D,p)\in \mathcal{LC}_m$ equals $N(C(D,p_0))$
hence is in $\mathcal{C}_m$.
Let $D^{\prime}$ be a labeled chord diagram obtained from $D$
by a single label change. Namely, $D^{\prime}$ is
obtained from $D$ by reversing the orientation
of a single labeled chord. Then $\sum_p C(D,p)=-\sum_p C(D^{\prime},p)$.
Therefore the space $\mathcal{C}_m$ is also described as
the $\mathbb{Q}$-linear space spanned by the labeled chord diagrams
of $m$ chords modulo the subspace generated by the set
$$\{D+D^{\prime};\ D^{\prime} {\rm \ is\ obtained\ from\ } D
{\rm \ by\ a\ single\ label\ change\ } \}.$$

We shall often regard a labeled chord diagram as an element of
$\mathcal{C}:=\prod_m \mathcal{C}_m$, if there is no confusion.

Let $D$ and $D^{\prime}$ be labeled chord diagrams and let $p$ and $q$ be vertices
of $D$ and $D^{\prime}$, respectively.
We shall produce a new labeled chord diagram $\mathcal{D}(D,p,D^{\prime},q)$,
which corresponds to an amalgamation in \S3.1,
by the following way.
Let $p_-$ and $p_+$ be the vertices of $D$ adjacent to $p$,
such that they are arranged as $p_- <p <p_+$ with respect to
the cyclic ordering of vertices coming from the orientation of the core.
Similarly, define $q_-$ and $q_+$. Also, let $\overline{p}$
(resp.\ $\overline{q}$) be the vertex of $D$ (resp.\ $D^{\prime}$)
which is the other end of the edge through $p$ (resp.\ $q$).

The first step is to place the cut $C(D^{\prime},q)$ on the right of
the cut $C(D,p_+)$, and regard the entirety as a labeled linear
chord diagram. The second step is to remove the two chords through $p$ or $q$,
and add a labeled chord which connects $\overline{p}$ and $\overline{q}$ instead.
The label of the added chord is determined by the rule indicated in Figure 5.

\begin{figure}
\begin{center}
\caption{the label of the added chord}

\vspace{0.2cm}

\unitlength 0.1in
\begin{picture}( 34.0300, 12.1000)(  3.9700,-16.4000)
%
\special{pn 8}%
\special{pa 400 1000}%
\special{pa 1200 1000}%
\special{fp}%
%
\special{pn 8}%
\special{pa 1400 1000}%
\special{pa 2200 1000}%
\special{fp}%
%
\special{pn 8}%
\special{pa 2800 1000}%
\special{pa 3800 1000}%
\special{fp}%
\put(8.0000,-6.0000){\makebox(0,0)[lb]{$D$}}%
\put(18.0000,-6.0000){\makebox(0,0)[lb]{$D^{\prime}$}}%
\put(4.0000,-9.0000){\makebox(0,0)[lb]{$\overline{p}$}}%
\put(12.0000,-9.0000){\makebox(0,0)[lb]{$p$}}%
\put(14.0000,-9.0000){\makebox(0,0)[lb]{$q$}}%
\put(22.0000,-9.0000){\makebox(0,0)[lb]{$\overline{q}$}}%
\put(28.0000,-9.0000){\makebox(0,0)[lb]{$\overline{p}$}}%
\put(38.0000,-9.0000){\makebox(0,0)[lb]{$\overline{q}$}}%
\put(28.0000,-6.0000){\makebox(0,0)[lb]{the added chord}}%
%
\special{pn 8}%
\special{pa 1400 1200}%
\special{pa 2200 1200}%
\special{fp}%
%
\special{pn 8}%
\special{pa 2800 1200}%
\special{pa 3800 1200}%
\special{fp}%
%
\special{pn 8}%
\special{pa 800 1000}%
\special{pa 700 950}%
\special{fp}%
\special{pa 800 1000}%
\special{pa 700 1040}%
\special{fp}%
%
\special{pn 8}%
\special{pa 1800 1000}%
\special{pa 1900 950}%
\special{fp}%
\special{pa 1800 1000}%
\special{pa 1900 1040}%
\special{fp}%
%
\special{pn 8}%
\special{pa 3300 1000}%
\special{pa 3400 950}%
\special{fp}%
\special{pa 3300 1000}%
\special{pa 3400 1040}%
\special{fp}%
%
\special{pn 8}%
\special{pa 400 1400}%
\special{pa 1200 1400}%
\special{fp}%
%
\special{pn 8}%
\special{pa 1400 1400}%
\special{pa 2200 1400}%
\special{fp}%
%
\special{pn 8}%
\special{pa 2800 1400}%
\special{pa 3800 1400}%
\special{fp}%
%
\special{pn 8}%
\special{pa 400 1600}%
\special{pa 1200 1600}%
\special{fp}%
%
\special{pn 8}%
\special{pa 1400 1600}%
\special{pa 2200 1600}%
\special{fp}%
%
\special{pn 8}%
\special{pa 2800 1600}%
\special{pa 3800 1600}%
\special{fp}%
%
\special{pn 8}%
\special{pa 800 1200}%
\special{pa 900 1150}%
\special{fp}%
\special{pa 800 1200}%
\special{pa 900 1240}%
\special{fp}%
%
\special{pn 8}%
\special{pa 800 1400}%
\special{pa 700 1350}%
\special{fp}%
\special{pa 800 1400}%
\special{pa 700 1440}%
\special{fp}%
%
\special{pn 8}%
\special{pa 800 1600}%
\special{pa 900 1550}%
\special{fp}%
\special{pa 800 1600}%
\special{pa 900 1640}%
\special{fp}%
%
\special{pn 8}%
\special{pa 1800 1600}%
\special{pa 1700 1550}%
\special{fp}%
\special{pa 1800 1600}%
\special{pa 1700 1640}%
\special{fp}%
%
\special{pn 8}%
\special{pa 1800 1400}%
\special{pa 1700 1350}%
\special{fp}%
\special{pa 1800 1400}%
\special{pa 1700 1440}%
\special{fp}%
%
\special{pn 8}%
\special{pa 1800 1200}%
\special{pa 1900 1150}%
\special{fp}%
\special{pa 1800 1200}%
\special{pa 1900 1240}%
\special{fp}%
%
\special{pn 8}%
\special{pa 3300 1200}%
\special{pa 3200 1150}%
\special{fp}%
\special{pa 3300 1200}%
\special{pa 3200 1240}%
\special{fp}%
%
\special{pn 8}%
\special{pa 3300 1400}%
\special{pa 3200 1350}%
\special{fp}%
\special{pa 3300 1400}%
\special{pa 3200 1440}%
\special{fp}%
%
\special{pn 8}%
\special{pa 3300 1600}%
\special{pa 3400 1550}%
\special{fp}%
\special{pa 3300 1600}%
\special{pa 3400 1640}%
\special{fp}%
%
\special{pn 20}%
\special{sh 1}%
\special{ar 400 1000 10 10 0  6.28318530717959E+0000}%
%
\special{pn 8}%
\special{pa 400 1200}%
\special{pa 1200 1200}%
\special{fp}%
%
\special{pn 20}%
\special{sh 1}%
\special{ar 400 1200 10 10 0  6.28318530717959E+0000}%
\special{sh 1}%
\special{ar 400 1400 10 10 0  6.28318530717959E+0000}%
\special{sh 1}%
\special{ar 400 1600 10 10 0  6.28318530717959E+0000}%
\special{sh 1}%
\special{ar 1200 1600 10 10 0  6.28318530717959E+0000}%
\special{sh 1}%
\special{ar 1200 1400 10 10 0  6.28318530717959E+0000}%
\special{sh 1}%
\special{ar 1200 1200 10 10 0  6.28318530717959E+0000}%
\special{sh 1}%
\special{ar 1200 1000 10 10 0  6.28318530717959E+0000}%
\special{sh 1}%
\special{ar 1400 1000 10 10 0  6.28318530717959E+0000}%
\special{sh 1}%
\special{ar 1400 1200 10 10 0  6.28318530717959E+0000}%
\special{sh 1}%
\special{ar 1400 1400 10 10 0  6.28318530717959E+0000}%
\special{sh 1}%
\special{ar 1400 1600 10 10 0  6.28318530717959E+0000}%
\special{sh 1}%
\special{ar 2200 1600 10 10 0  6.28318530717959E+0000}%
\special{sh 1}%
\special{ar 2200 1400 10 10 0  6.28318530717959E+0000}%
\special{sh 1}%
\special{ar 2200 1200 10 10 0  6.28318530717959E+0000}%
\special{sh 1}%
\special{ar 2200 1000 10 10 0  6.28318530717959E+0000}%
\special{sh 1}%
\special{ar 2800 1000 10 10 0  6.28318530717959E+0000}%
\special{sh 1}%
\special{ar 2800 1200 10 10 0  6.28318530717959E+0000}%
\special{sh 1}%
\special{ar 2800 1400 10 10 0  6.28318530717959E+0000}%
\special{sh 1}%
\special{ar 2800 1600 10 10 0  6.28318530717959E+0000}%
\special{sh 1}%
\special{ar 3800 1600 10 10 0  6.28318530717959E+0000}%
\special{sh 1}%
\special{ar 3800 1400 10 10 0  6.28318530717959E+0000}%
\special{sh 1}%
\special{ar 3800 1200 10 10 0  6.28318530717959E+0000}%
\special{sh 1}%
\special{ar 3800 1000 10 10 0  6.28318530717959E+0000}%
\end{picture}%
\end{center}
\end{figure}
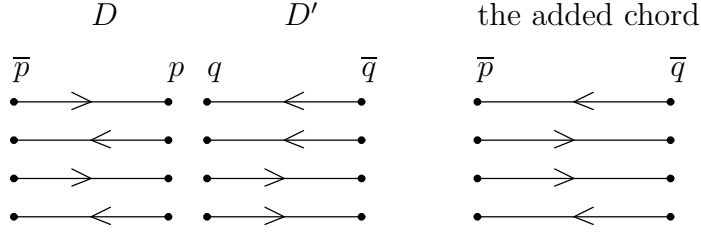

Finally, define $\mathcal{D}(D,p,D^{\prime},q)$ to be the closing
of the result of the second step. If $D$ has $m$ chords and $D^{\prime}$
has $m^{\prime}$ chords, then $\mathcal{D}(D,p,D^{\prime},q)$ has
$m+m^{\prime}-1$ chords. We have $\mathcal{D}(D,p,D^{\prime},q)
=N(C(D,p)\ast C(D^{\prime},q))$.
A schematic picture of this operation is as in Figure 6.

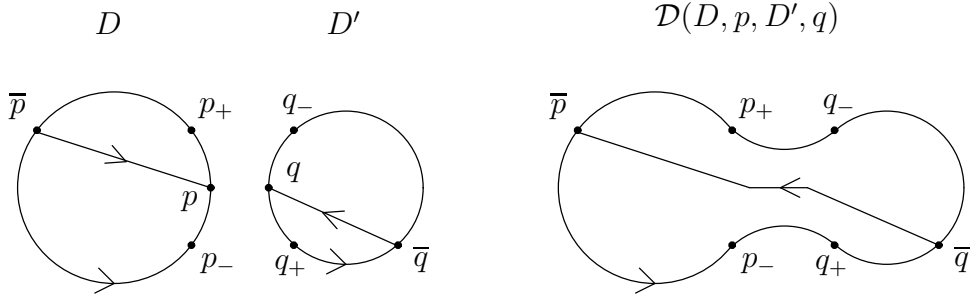
\begin{figure}
\begin{center}
\caption{the new labeled chord diagram $\mathcal{D}(D,p,D^{\prime},q)$}

\vspace{0.2cm}

\unitlength 0.1in
\begin{picture}( 49.4000, 15.2000)(  2.6000,-17.5000)
%
\special{pn 8}%
\special{ar 2000 1200 400 400  0.0000000 6.2831853}%
\put(7.0000,-4.0000){\makebox(0,0)[lb]{$D$}}%
\put(19.0000,-4.0000){\makebox(0,0)[lb]{$D^{\prime}$}}%
%
\special{pn 8}%
\special{ar 800 1200 500 500  0.0000000 6.2831853}%
%
\special{pn 20}%
\special{sh 1}%
\special{ar 1300 1200 10 10 0  6.28318530717959E+0000}%
%
\special{pn 20}%
\special{sh 1}%
\special{ar 1600 1200 10 10 0  6.28318530717959E+0000}%
%
\special{pn 20}%
\special{sh 1}%
\special{ar 1200 900 10 10 0  6.28318530717959E+0000}%
%
\special{pn 20}%
\special{sh 1}%
\special{ar 1730 1500 10 10 0  6.28318530717959E+0000}%
%
\special{pn 20}%
\special{sh 1}%
\special{ar 1200 1500 10 10 0  6.28318530717959E+0000}%
%
\special{pn 20}%
\special{sh 1}%
\special{ar 1730 900 10 10 0  6.28318530717959E+0000}%
%
\special{pn 20}%
\special{sh 1}%
\special{ar 400 900 10 10 0  6.28318530717959E+0000}%
%
\special{pn 20}%
\special{sh 1}%
\special{ar 2270 1500 10 10 0  6.28318530717959E+0000}%
%
\special{pn 8}%
\special{pa 1300 1200}%
\special{pa 400 910}%
\special{fp}%
%
\special{pn 8}%
\special{pa 1600 1200}%
\special{pa 2260 1500}%
\special{fp}%
\put(11.5000,-13.2000){\makebox(0,0)[lb]{$p$}}%
\put(16.9000,-11.7000){\makebox(0,0)[lb]{$q$}}%
\put(2.6000,-8.4000){\makebox(0,0)[lb]{$\overline{p}$}}%
\put(23.5000,-16.4000){\makebox(0,0)[lb]{$\overline{q}$}}%
\put(12.5000,-16.5000){\makebox(0,0)[lb]{$p_-$}}%
\put(12.4000,-8.4000){\makebox(0,0)[lb]{$p_+$}}%
\put(16.7000,-8.2000){\makebox(0,0)[lb]{$q_-$}}%
\put(16.3000,-16.6000){\makebox(0,0)[lb]{$q_+$}}%
%
\special{pn 20}%
\special{sh 1}%
\special{ar 4000 900 10 10 0  6.28318530717959E+0000}%
%
\special{pn 20}%
\special{sh 1}%
\special{ar 4530 1500 10 10 0  6.28318530717959E+0000}%
%
\special{pn 20}%
\special{sh 1}%
\special{ar 4000 1500 10 10 0  6.28318530717959E+0000}%
%
\special{pn 20}%
\special{sh 1}%
\special{ar 4530 900 10 10 0  6.28318530717959E+0000}%
%
\special{pn 20}%
\special{sh 1}%
\special{ar 3200 900 10 10 0  6.28318530717959E+0000}%
%
\special{pn 20}%
\special{sh 1}%
\special{ar 5070 1500 10 10 0  6.28318530717959E+0000}%
\put(30.6000,-8.4000){\makebox(0,0)[lb]{$\overline{p}$}}%
\put(51.5000,-16.4000){\makebox(0,0)[lb]{$\overline{q}$}}%
\put(40.5000,-16.5000){\makebox(0,0)[lb]{$p_-$}}%
\put(40.4000,-8.4000){\makebox(0,0)[lb]{$p_+$}}%
\put(44.7000,-8.2000){\makebox(0,0)[lb]{$q_-$}}%
\put(44.3000,-16.6000){\makebox(0,0)[lb]{$q_+$}}%
%
\special{pn 8}%
\special{ar 3600 1200 500 500  0.6435011 5.6396842}%
%
\special{pn 8}%
\special{ar 4800 1200 400 400  3.9269908 6.2831853}%
\special{ar 4800 1200 400 400  0.0000000 2.3561945}%
%
\special{pn 8}%
\special{ar 4270 1800 400 400  3.9958473 5.4071273}%
%
\special{pn 8}%
\special{ar 4270 600 400 400  0.8760581 2.2873380}%
%
\special{pn 8}%
\special{pa 3200 910}%
\special{pa 4090 1200}%
\special{fp}%
\special{pa 4090 1200}%
\special{pa 4390 1200}%
\special{fp}%
\special{pa 4390 1200}%
\special{pa 5070 1500}%
\special{fp}%
\put(36.0000,-4.0000){\makebox(0,0)[lb]{$\mathcal{D}(D,p,D^{\prime},q)$}}%
%
\special{pn 8}%
\special{pa 862 1064}%
\special{pa 790 980}%
\special{fp}%
\special{pa 862 1064}%
\special{pa 752 1072}%
\special{fp}%
%
\special{pn 8}%
\special{pa 1880 1330}%
\special{pa 1992 1322}%
\special{fp}%
\special{pa 1880 1330}%
\special{pa 1954 1414}%
\special{fp}%
%
\special{pn 8}%
\special{pa 4250 1200}%
\special{pa 4350 1150}%
\special{fp}%
\special{pa 4250 1200}%
\special{pa 4350 1250}%
\special{fp}%
%
\special{pn 8}%
\special{pa 800 1700}%
\special{pa 720 1620}%
\special{fp}%
\special{pa 800 1700}%
\special{pa 700 1750}%
\special{fp}%
%
\special{pn 8}%
\special{pa 3600 1700}%
\special{pa 3520 1620}%
\special{fp}%
\special{pa 3600 1700}%
\special{pa 3500 1750}%
\special{fp}%
%
\special{pn 8}%
\special{pa 2000 1600}%
\special{pa 1920 1520}%
\special{fp}%
\special{pa 2000 1600}%
\special{pa 1900 1650}%
\special{fp}%
\end{picture}%
\end{center}
\end{figure}

\begin{dfn}
\label{bracket-of-C}
Let $D$ and $D^{\prime}$ be labeled chord diagrams of
$m$ and $m^{\prime}$ chords, respectively. Set
$$[D,D^{\prime}]:=\sum_{(p,q)}\mathcal{D}(D,p,D^{\prime},q)
\in \mathcal{C}_{m+m^{\prime}-1},$$
where the sum is taken over all pairs of the vertices
of $D$ and $D^{\prime}$.
\end{dfn}

By construction, this formula is compatible with the formula
(\ref{N(C)-bracket}). Hence it defines a well-defined Lie algebra
structure on the space $\mathcal{C}$. But we continue
a diagrammatic argument for its own interest.
It is clear from the rule in Figure 5 that if $D_1$ is obtained
from $D$ by a single label change, then $[D,D^{\prime}]=-[D_1,D^{\prime}]$.
Therefore we can extend by linearity the bracket in Definition \ref{bracket-of-C}
to a $\mathbb{Q}$-linear map $[\ ,\ ]\colon \mathcal{C}\otimes \mathcal{C}\to \mathcal{C}$.

\begin{prop}
\label{C-is-Liealg}
The linear space $\mathcal{C}:=\prod_m \mathcal{C}_m$ has a structure of Lie algebra
with respect to the bracket defined above. Moreover, we have
$[\mathcal{C}_m,\mathcal{C}_{m^{\prime}}]\subset \mathcal{C}_{m+m^{\prime}-1}$.
\end{prop}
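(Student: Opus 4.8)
The plan is to establish, in increasing order of difficulty, the grading $[\mathcal{C}_m,\mathcal{C}_{m'}]\subseteq\mathcal{C}_{m+m'-1}$, the antisymmetry $[D,D']=-[D',D]$, and the Jacobi identity. The grading is immediate from the construction: if $D$ has $m$ chords and $D'$ has $m'$ chords, then each summand $\mathcal{D}(D,p,D',q)=N(C(D,p)\ast C(D',q))$ of $[D,D']$ is a labeled chord diagram with $m+m'-1$ chords, hence lies in $\mathcal{C}_{m+m'-1}$. For the antisymmetry I would use the identification $\mathcal{D}(D,p,D',q)=N(C(D,p)\ast C(D',q))$ together with the skew-symmetry of the bilinear map $\mathcal{B}$ of (\ref{calB}), namely $\mathcal{B}(u,v)=-\mathcal{B}(v,u)$, which follows from the antisymmetry of the intersection pairing on $H$ and the cyclic invariance of $N$. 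Since $\mathcal{B}(a(C),a(C'))=Na(C\ast C')$ and $a$ is injective in a stable range, this forces $N(C\ast C')=-N(C'\ast C)$ in $\mathcal{C}$, i.e.\ $\mathcal{D}(D,p,D',q)=-\mathcal{D}(D',q,D,p)$, and re-indexing the double sum over pairs of vertices yields $[D,D']=-[D',D]$. The same sign is visible directly in Figure 5: exchanging the roles of $(D,p)$ and $(D',q)$ reverses the orientation of the added chord joining $\overline{p}$ and $\overline{q}$, and one orientation reversal changes the sign of a diagram in $\mathcal{C}$.

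For the Jacobi identity the quickest route is through the stable isomorphism. Fix $D,D',D''$ with $m,m',m''$ chords; then $[[D,D'],D'']$ and its two cyclic permutates all lie in $\mathcal{C}_{m+m'+m''-2}$, so choosing $g\geq m+m'+m''-2$ makes the restriction of $a$ to $\mathcal{C}_{m+m'+m''-2}$ injective by Lemma \ref{a-isom}(2). By Lemma \ref{a-bracket}, $a$ carries the bracket (\ref{N(C)-bracket}) to the bracket of $(\ag)^{\fsp}\subseteq\agminus=\deromega(\T)$; since this target is a genuine Lie algebra, the cyclic sum of double brackets is annihilated by $a$ and hence vanishes in $\mathcal{C}_{m+m'+m''-2}$. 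As $D,D',D''$ were arbitrary (and the bracket on $\mathcal{C}$ does not depend on $g$), the Jacobi identity holds throughout $\mathcal{C}$; together with the grading and antisymmetry this proves the proposition.

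Finally, a purely diagrammatic proof of Jacobi — the verification one continues ``for its own interest'' — would instead expand $[[D,D'],D'']$ as a sum over a vertex $p$ of $D$, a vertex $q$ of $D'$, a vertex $s$ of $D''$, and a vertex $r$ of $\mathcal{D}(D,p,D',q)$ (which is a vertex of $D$ other than $p$, or of $D'$ other than $q$). One then shows that the doubly amalgamated diagram $\mathcal{D}(\mathcal{D}(D,p,D',q),r,D'',s)$ depends, up to the single-label sign, only on the resulting configuration of broken and reconnected chords, and that the terms of the cyclic sum cancel in pairs. I expect the main obstacle to be the sign bookkeeping — tracking the $(-1)^s$-type contributions from the cyclic relabelings implicit in $N$ against the orientation rule of Figure 5 — together with the case in which the chord through $r$ in $\mathcal{D}(D,p,D',q)$ is the new chord produced at the first amalgamation. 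I would handle the signs as in the proof of Lemma \ref{odd}, by passing to the standard-label representatives $\overline{C}^\flat$, which reduces the identity to a finite check of how two successive amalgamations interact on underlying unoriented chord diagrams.
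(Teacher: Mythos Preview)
Your argument is correct. The grading and antisymmetry are handled exactly as in the paper (the paper simply says the antisymmetry ``is clear from the rule in Figure 5''), and your stable-isomorphism proof of Jacobi is valid: for fixed $D,D',D''$ the triple bracket lands in a single $\mathcal{C}_{m+m'+m''-2}$, Lemma \ref{a-isom}(2) gives injectivity of $a$ there for $g$ large, and Lemma \ref{a-bracket} transports Jacobi from the derivation algebra $\agminus$.

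The paper, however, deliberately does \emph{not} take this route. Just before the proposition the authors note that compatibility with (\ref{N(C)-bracket}) already forces the Lie algebra structure, and then say ``But we continue a diagrammatic argument for its own interest.'' Their actual proof is purely combinatorial: they expand $[D,[D',D'']]$ (not the cyclic sum) in terms of $\mathcal{D}(D,p,D''',s)$ with $D'''=\mathcal{D}(D',q,D'',r)$, split the vertices $s$ of $D'''$ into four cases according to whether $s$ corresponds to $\overline q$, some other vertex of $D'$, $\overline r$, or some other vertex of $D''$, and match each case to a term of $[[D,D'],D'']$ or $[D',[D,D'']]$; the leftover terms on the right-hand side are then shown to cancel against each other. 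Your sketched diagrammatic alternative (expand $[[D,D'],D'']$, cancel the cyclic sum in pairs) is in the same spirit but organized differently; the paper's case split makes the sign bookkeeping more transparent than the $\overline{C}^\flat$ reduction you propose, and in particular the ``new chord'' case you flag as delicate is exactly their cases (1) and (3), which they dispatch by the observation $\mathcal{D}(D,p,D''',\overline q)=\mathcal{D}(\mathcal{D}(D,p,D',\overline q),q,D'',r)$.

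In short: your proof is a legitimate shortcut that the paper explicitly acknowledges; the paper instead gives an intrinsic, $g$-free diagrammatic verification.
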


We call this Lie algebra {\it the Lie algebra of oriented chord diagrams}.

\begin{proof}
The anti-symmetry of the bracket is clear from the rule in Figure 5.
To prove the Jacobi identity, it suffices to show
\begin{equation}
\label{Jacobi-id}
[D,[D^{\prime},D^{\prime \prime}]]=[D^{\prime},[D,D^{\prime \prime}]]
+[[D,D^{\prime}],D^{\prime \prime}]
\end{equation}
for any labeled chord diagrams $D$, $D^{\prime}$, and $D^{\prime \prime}$.
Let $p$, $q$, and $r$ be vertices of $D$, $D^{\prime}$, and $D^{\prime \prime}$,
respectively. For simplicity, we denote
$\mathcal{D}(D^{\prime},q,D^{\prime \prime},r)=D^{\prime \prime \prime}$.
The contributions of $p$ to the
bracket $[D,D^{\prime \prime \prime}]$ consists of the diagrams
of the form $\mathcal{D}(D,p,D^{\prime \prime \prime},s)$, where
$s$ is a vertex of $D^{\prime \prime \prime}$.
We consider the following four cases.
\begin{enumerate}
\item[(1)] $s$ is the vertex corresponding to $\overline{q}$.
\item[(2)] $s$ is a vertex corresponding to some vertex of $D^{\prime}$ other than $\overline{q}$.
\item[(3)] $s$ is the vertex corresponding to $\overline{r}$.
\item[(4)] $s$ is a vertex corresponding to some vertex of $D^{\prime \prime}$
other than $\overline{r}$.
\end{enumerate}
See Figure 7. Consider the case (1). Then
$\mathcal{D}(D,p,D^{\prime \prime \prime},s)
=\mathcal{D}(D,p,D^{\prime \prime \prime},\overline{q})$.
But this is also equal to $\mathcal{D}(D_0,q,D^{\prime \prime},r)$,
where $D_0=\mathcal{D}(D,p,D^{\prime},\overline{q})$.
We can easily check the signs using Figure 5. Note that
$\mathcal{D}(D_0,q,D^{\prime \prime},r)$ will appear once when
we compute the second term of the right hand side of (\ref{Jacobi-id}).
The same thing happens to each contribution of the case (2).
By the same argument we see the cases (3) or (4) will appear
once at the first term of the right hand side of (\ref{Jacobi-id}).

\begin{figure}
\begin{center}
\caption{the four cases}

\vspace{0.2cm}

\unitlength 0.1in
\begin{picture}( 39.4000, 17.2000)(  3.0000,-19.5000)
\put(7.0000,-4.0000){\makebox(0,0)[lb]{$D$}}%
%
\special{pn 8}%
\special{ar 800 1400 500 500  0.0000000 6.2831853}%
%
\special{pn 20}%
\special{sh 1}%
\special{ar 1300 1400 10 10 0  6.28318530717959E+0000}%
%
\special{pn 20}%
\special{sh 1}%
\special{ar 1200 1100 10 10 0  6.28318530717959E+0000}%
%
\special{pn 20}%
\special{sh 1}%
\special{ar 1200 1700 10 10 0  6.28318530717959E+0000}%
%
\special{pn 20}%
\special{sh 1}%
\special{ar 400 1100 10 10 0  6.28318530717959E+0000}%
%
\special{pn 8}%
\special{pa 1300 1400}%
\special{pa 400 1110}%
\special{fp}%
\put(11.5000,-15.2000){\makebox(0,0)[lb]{$p$}}%
%
\special{pn 20}%
\special{sh 1}%
\special{ar 3000 1100 10 10 0  6.28318530717959E+0000}%
%
\special{pn 20}%
\special{sh 1}%
\special{ar 3530 1700 10 10 0  6.28318530717959E+0000}%
%
\special{pn 20}%
\special{sh 1}%
\special{ar 3000 1700 10 10 0  6.28318530717959E+0000}%
%
\special{pn 20}%
\special{sh 1}%
\special{ar 3530 1100 10 10 0  6.28318530717959E+0000}%
%
\special{pn 20}%
\special{sh 1}%
\special{ar 2200 1100 10 10 0  6.28318530717959E+0000}%
%
\special{pn 20}%
\special{sh 1}%
\special{ar 4070 1700 10 10 0  6.28318530717959E+0000}%
\put(21.1000,-10.1000){\makebox(0,0)[lb]{$\overline{q}$}}%
\put(40.5000,-19.0000){\makebox(0,0)[lb]{$\overline{r}$}}%
%
\special{pn 8}%
\special{ar 2600 1400 500 500  0.6435011 5.6396842}%
%
\special{pn 8}%
\special{ar 3800 1400 400 400  3.9269908 6.2831853}%
\special{ar 3800 1400 400 400  0.0000000 2.3561945}%
%
\special{pn 8}%
\special{ar 3270 2000 400 400  3.9958473 5.4071273}%
%
\special{pn 8}%
\special{ar 3270 800 400 400  0.8760581 2.2873380}%
%
\special{pn 8}%
\special{pa 2200 1110}%
\special{pa 3090 1400}%
\special{fp}%
\special{pa 3090 1400}%
\special{pa 3390 1400}%
\special{fp}%
\special{pa 3390 1400}%
\special{pa 4070 1700}%
\special{fp}%
\put(31.0000,-4.0000){\makebox(0,0)[lb]{$D^{\prime \prime \prime}$}}%
%
\special{pn 8}%
\special{pa 862 1264}%
\special{pa 790 1180}%
\special{fp}%
\special{pa 862 1264}%
\special{pa 752 1272}%
\special{fp}%
%
\special{pn 8}%
\special{pa 3250 1400}%
\special{pa 3350 1350}%
\special{fp}%
\special{pa 3250 1400}%
\special{pa 3350 1450}%
\special{fp}%
\put(24.0000,-7.0000){\makebox(0,0)[lb]{$D^{\prime}$}}%
\put(37.0000,-7.0000){\makebox(0,0)[lb]{$D^{\prime \prime}$}}%
%
\special{pn 20}%
\special{sh 1}%
\special{ar 2310 1800 10 10 0  6.28318530717959E+0000}%
%
\special{pn 20}%
\special{sh 1}%
\special{ar 4150 1200 10 10 0  6.28318530717959E+0000}%
\put(30.5000,-18.1000){\makebox(0,0)[lb]{$q_-$}}%
\put(30.5000,-10.6000){\makebox(0,0)[lb]{$q_+$}}%
\put(34.3000,-10.5000){\makebox(0,0)[lb]{$r_-$}}%
\put(34.1000,-18.5000){\makebox(0,0)[lb]{$r_+$}}%
\put(15.6000,-12.5000){\makebox(0,0)[lb]{\fbox{case 1}}}%
\put(17.1000,-20.0000){\makebox(0,0)[lb]{\fbox{case 2}}}%
\put(42.1000,-17.6000){\makebox(0,0)[lb]{\fbox{case 3}}}%
\put(42.4000,-11.7000){\makebox(0,0)[lb]{\fbox{case 4}}}%
%
\special{pn 8}%
\special{pa 800 1900}%
\special{pa 720 1820}%
\special{fp}%
\special{pa 800 1900}%
\special{pa 700 1950}%
\special{fp}%
%
\special{pn 8}%
\special{pa 2600 1900}%
\special{pa 2520 1820}%
\special{fp}%
\special{pa 2600 1900}%
\special{pa 2500 1950}%
\special{fp}%
\end{picture}%
\end{center}
\end{figure}
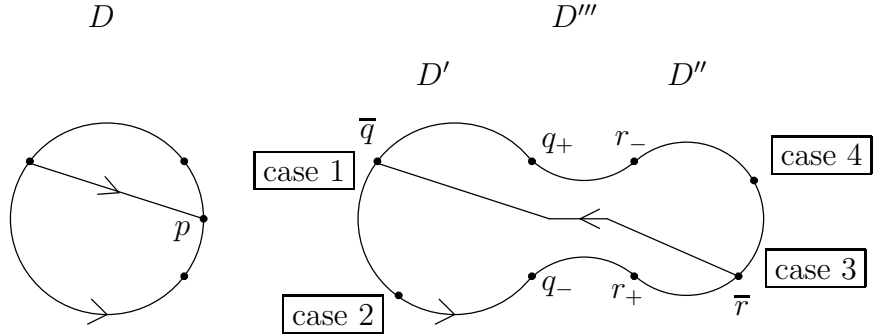

Now we consider the contributions
$\mathcal{D}(D,p,D^{\prime \prime \prime},s)$ for all $p$, $q$, and $r$,
and subtract them from the right hand side of (\ref{Jacobi-id}).
The remaining terms consist of two types. One comes from the
first term, and is written as $\mathcal{D}(D^{\prime},q,D_1,t)$,
where $D_1=\mathcal{D}(D,p,D^{\prime \prime},r)$ and $t$ is a vertex
corresponding to some vertex of $D$. The other comes from the
second term, and is written as $\mathcal{D}(D_2,u,D^{\prime \prime},r)$,
where $D_2=\mathcal{D}(D,p,D^{\prime},q)$ and $u$ is a vertex
corresponding to some vertex of $D$.
By the same argument as before, we can see these two types cancel.
This proves the Jacobi identity, hence completes the proof.
\end{proof}

An {\it isolated chord} in a labeled chord diagram is
a labeled chord whose two ends are adjacent on the core.

\begin{lem}
\label{cancel}
Let $D^{\prime}$ be a labeled chord diagram having an
isolated chord. Let $q_0$, $q_1$ be the ends of the isolated
chord. Then
$$\sum_p \mathcal{D}(D,p,D^{\prime},q_0)
+\sum_p \mathcal{D}(D,p,D^{\prime},q_1)=0$$
for any labeled chord diagram $D$. Here the sums are taken over
all the vertices of $D$.
\end{lem}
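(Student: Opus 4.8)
The plan is to transport the identity to Kontsevich's formal symplectic geometry via the stable isomorphism $a$, where the two sums become a single telescoping sum. Write $S_0:=\sum_p\mathcal{D}(D,p,D',q_0)$ and $S_1:=\sum_p\mathcal{D}(D,p,D',q_1)$; both lie in $\mathcal{C}_{m+m'-1}$, and by Lemma~\ref{a-isom} the map $a\colon\mathcal{C}_{m+m'-1}\to(\ag)_{(2(m+m'-1))}^{\fsp}$ is injective once $g\ge m+m'-1$. Since $\mathcal{C}_{m+m'-1}$ does not depend on $g$, it suffices to prove $a(S_0+S_1)=0$ for all sufficiently large $g$.

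Fix a vertex $p_0$ of $D$ and set $u:=a(C(D,p_0))\in H^{\otimes 2m}$ and $v:=a(C(D',q_0))\in H^{\otimes 2m'}$. Two features of the hypothesis are crucial. First, the ends $q_0,q_1$ of the isolated chord are adjacent on the core of $D'$, so (relabelling so that $q_1$ follows $q_0$) cutting $D'$ at $q_1$ is the same as cutting at $q_0$ followed by one cyclic shift; hence $C(D',q_1)=\nu\,C(D',q_0)$ and $a(C(D',q_1))=\nu v$. Second, the isolated chord is precisely the first chord of $C(D',q_0)$, so (with a suitable choice of its label) $v=\omega\otimes w$, where the symplectic form $\omega\in H^{\otimes 2}$ occupies the first two tensor factors — the two ends of the isolated chord — and $w\in H^{\otimes(2m'-2)}$ records the remaining chords. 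Using $\mathcal{D}(D,p,D',q)=N(C(D,p)\ast C(D',q))$, the identity $Na(C\ast C')=\mathcal{B}(a(C),a(C'))$, and the fact that the cuts $C(D,p)$ run through $\nu^s C(D,p_0)$, $0\le s\le 2m-1$, as $p$ runs through the vertices of $D$, one gets
\[
a(S_0)=\sum_{s=0}^{2m-1}\mathcal{B}(\nu^s u,\,v),\qquad a(S_1)=\sum_{s=0}^{2m-1}\mathcal{B}(\nu^s u,\,\nu v).
\]

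The heart of the argument is the evaluation of $\mathcal{B}(\nu^s u,\cdot)$ on $v=\omega\otimes w$ and on $\nu v$. Expanding $\mathcal{B}$ by contracting the first tensor factor of $\nu^s u$ against the first factor of the second argument, and using $\mathcal{B}'(\omega,\omega)=-\omega$ (equivalently $\sum_{i=1}^{g}(X\cdot A_i)B_i-(X\cdot B_i)A_i=-X$), I expect
\[
\mathcal{B}(\nu^s u,\,\omega\otimes w)=N\!\big(\nu^{s+1}(u)\otimes w\big),\qquad
\mathcal{B}(\nu^s u,\,\nu(\omega\otimes w))=-\,N\!\big(\nu^{s}(u)\otimes w\big),
\]
the sign discrepancy being exactly the effect of $\nu\omega=-\omega$, i.e.\ it records whether the first factor of $D$ is glued to the first or to the second end of the isolated chord. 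Summing over $s$ and using $\nu^{2m}u=u$, the two contributions telescope:
\[
a(S_0+S_1)=\sum_{s=0}^{2m-1}\Big(N(\nu^{s+1}(u)\otimes w)-N(\nu^{s}(u)\otimes w)\Big)=N(u\otimes w)-N(u\otimes w)=0,
\]
and hence $S_0+S_1=0$.

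Equivalently, one can run the same argument diagrammatically: if $E$ denotes the linear chord diagram of $m'-1$ chords obtained from $D'$ by deleting the isolated chord together with its two (adjacent) endpoints and cutting open at the resulting gap, then tracing through the construction of $\mathcal{D}$ and the labelling rule of Figure~5 gives $\mathcal{D}(D,p,D',q_0)=N(C(D,p^{+})\sqcup E)$ and $\mathcal{D}(D,p,D',q_1)=-N(C(D,p)\sqcup E)$, where $p^{+}$ is the successor of $p$ on the core of $D$ and $\sqcup$ is concatenation; summing the first over $p$ and re-indexing $p\mapsto p^{+}$ turns it into $\sum_p N(C(D,p)\sqcup E)$, which cancels the sum of the second. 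The step I expect to be delicate is the bookkeeping of signs: a term-by-term cancellation $\mathcal{D}(D,p,D',q_0)+\mathcal{D}(D,p,D',q_1)=0$ is \emph{false} in general, so one genuinely needs the sum over all vertices $p$ and the telescoping (equivalently, the periodicity $\nu^{2m}u=u$); and the crucial minus sign in $\mathcal{D}(D,p,D',q_1)=-N(C(D,p)\sqcup E)$ must be extracted carefully, either from the labelling rule of Figure~5 or, more safely, from $\mathcal{B}'(\omega,\omega)=-\omega$ together with $\nu\omega=-\omega$.
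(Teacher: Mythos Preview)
Your argument is correct, but the paper's proof is far shorter and more direct: it simply observes the pairwise identity
\[
\mathcal{D}(D,p_0,D',q_0)=-\mathcal{D}(D,p_1,D',q_1),
\]
where $p_1$ is the vertex of $D$ immediately following $p_0$ in the cyclic order. Summing over all $p_0$ (equivalently all $p_1$) gives the lemma in one line. This is precisely the ``shifted'' term-by-term cancellation you discover at the end of your diagrammatic paragraph, where you find $\mathcal{D}(D,p,D',q_0)=N(C(D,p^{+})\sqcup E)$ and $\mathcal{D}(D,p,D',q_1)=-N(C(D,p)\sqcup E)$; substituting $p\mapsto p^{+}$ in the second and comparing with the first is exactly the paper's identity.

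Your primary route --- transporting the question through the stable isomorphism $a$ into $\T$ and telescoping $\sum_s\bigl(N(\nu^{s+1}u\otimes w)-N(\nu^{s}u\otimes w)\bigr)$ --- is sound and the computations check out, but it is a detour: the identity lives purely in $\mathcal{C}$ and can be seen diagrammatically without invoking $a$, injectivity for large $g$, or the algebra $\T$ at all. What your approach buys is an explicit algebraic verification of the sign (via $\mathcal{B}'(\omega,\omega)=-\omega$ and $\nu\omega=-\omega$), which you flag as the delicate point; the paper simply reads the sign off the labelling rule in Figure~5. Your caution that $\mathcal{D}(D,p,D',q_0)+\mathcal{D}(D,p,D',q_1)=0$ is false for fixed $p$ is well taken, but note that the correct pairing (shift $p$ by one) \emph{is} a term-by-term cancellation, not merely a global telescoping.
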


\begin{proof}
We may assume $q_1$ is next to $q_0$ with respect to
the orientation of the core. Let $p_0$ be a vertex of $D$
and $p_1$ the vertex next to $p_0$. Then we have
$$\mathcal{D}(D,p_0,D^{\prime},q_0)=-\mathcal{D}(D,p_1,D^{\prime},q_1).$$
This proves the lemma.
\end{proof}

For $m\ge 1$, let $\Omega_m\in \mathcal{C}_m$ be the closing of the
labeled linear chord diagram $I_m=\{ (1,2),(3,4),\ldots, (2m-1,2m)\}$.
We say a vertex of $\Omega_m$ is {\it odd} (resp.\ {\it even}) if it
corresponds to an odd (resp.\ even) numbered vertex in $I_m$.
All the chords of $\Omega_m$ are isolated, hence by Lemma \ref{cancel},
$[D,\Omega_m]=0$ for any labeled chord diagram $D$. Therefore,
$\Omega_m\in Z(\mathcal{C})$.

For integers $a,b\ge 1$, define a labeled chord diagram $D(a,b)$ to
be the closing of the labeled linear chord diagram
$$\{ (1,2),(3,4),\ldots,(2a-1,2a),(2a+1,2a+2b+2),(2a+2,2a+3),
\ldots,(2a+2b,2a+2b+1) \}.$$

\begin{figure}
\begin{center}
\caption{$D(1,3)$}

\vspace{0.2cm}

\unitlength 0.1in
\begin{picture}( 16.0000, 16.0000)(  4.5000,-17.6000)
%
\special{pn 8}%
\special{ar 1250 960 800 800  0.0000000 6.2831853}%
%
\special{pn 8}%
\special{pa 1352 1750}%
\special{pa 1340 1720}%
\special{pa 1332 1688}%
\special{pa 1324 1658}%
\special{pa 1318 1626}%
\special{pa 1314 1594}%
\special{pa 1310 1562}%
\special{pa 1310 1530}%
\special{pa 1310 1498}%
\special{pa 1314 1466}%
\special{pa 1320 1436}%
\special{pa 1330 1406}%
\special{pa 1346 1378}%
\special{pa 1366 1352}%
\special{pa 1392 1334}%
\special{pa 1424 1326}%
\special{pa 1454 1326}%
\special{pa 1486 1334}%
\special{pa 1516 1346}%
\special{pa 1542 1362}%
\special{pa 1568 1384}%
\special{pa 1590 1404}%
\special{pa 1612 1428}%
\special{pa 1634 1452}%
\special{pa 1652 1478}%
\special{pa 1670 1506}%
\special{pa 1686 1532}%
\special{pa 1702 1560}%
\special{pa 1716 1590}%
\special{pa 1720 1602}%
\special{sp}%
%
\special{pn 8}%
\special{pa 1148 172}%
\special{pa 1160 202}%
\special{pa 1168 232}%
\special{pa 1176 264}%
\special{pa 1182 294}%
\special{pa 1186 326}%
\special{pa 1190 358}%
\special{pa 1190 390}%
\special{pa 1190 422}%
\special{pa 1186 454}%
\special{pa 1180 486}%
\special{pa 1170 516}%
\special{pa 1154 544}%
\special{pa 1134 568}%
\special{pa 1108 586}%
\special{pa 1078 596}%
\special{pa 1046 596}%
\special{pa 1014 588}%
\special{pa 986 574}%
\special{pa 958 558}%
\special{pa 932 538}%
\special{pa 910 516}%
\special{pa 888 492}%
\special{pa 868 468}%
\special{pa 848 442}%
\special{pa 830 416}%
\special{pa 814 388}%
\special{pa 798 360}%
\special{pa 784 332}%
\special{pa 780 320}%
\special{sp}%
%
\special{pn 8}%
\special{pa 2040 860}%
\special{pa 2010 872}%
\special{pa 1978 880}%
\special{pa 1948 886}%
\special{pa 1916 892}%
\special{pa 1884 898}%
\special{pa 1852 900}%
\special{pa 1820 900}%
\special{pa 1788 900}%
\special{pa 1756 896}%
\special{pa 1726 890}%
\special{pa 1696 878}%
\special{pa 1668 864}%
\special{pa 1642 844}%
\special{pa 1624 816}%
\special{pa 1616 786}%
\special{pa 1616 754}%
\special{pa 1624 724}%
\special{pa 1638 694}%
\special{pa 1654 668}%
\special{pa 1674 642}%
\special{pa 1696 618}%
\special{pa 1720 598}%
\special{pa 1744 576}%
\special{pa 1770 558}%
\special{pa 1796 540}%
\special{pa 1824 524}%
\special{pa 1852 510}%
\special{pa 1882 496}%
\special{pa 1894 490}%
\special{sp}%
%
\special{pn 8}%
\special{pa 612 1432}%
\special{pa 640 1418}%
\special{pa 668 1404}%
\special{pa 696 1386}%
\special{pa 724 1370}%
\special{pa 750 1352}%
\special{pa 774 1332}%
\special{pa 798 1310}%
\special{pa 822 1288}%
\special{pa 840 1262}%
\special{pa 860 1236}%
\special{pa 872 1208}%
\special{pa 884 1176}%
\special{pa 886 1146}%
\special{pa 882 1114}%
\special{pa 866 1086}%
\special{pa 844 1062}%
\special{pa 818 1046}%
\special{pa 788 1034}%
\special{pa 758 1026}%
\special{pa 726 1022}%
\special{pa 694 1020}%
\special{pa 662 1020}%
\special{pa 630 1022}%
\special{pa 598 1026}%
\special{pa 566 1032}%
\special{pa 536 1040}%
\special{pa 504 1048}%
\special{pa 474 1058}%
\special{pa 462 1062}%
\special{sp}%
%
\special{pn 8}%
\special{pa 588 522}%
\special{pa 616 536}%
\special{pa 646 550}%
\special{pa 674 564}%
\special{pa 700 582}%
\special{pa 728 598}%
\special{pa 754 616}%
\special{pa 778 638}%
\special{pa 802 658}%
\special{pa 826 680}%
\special{pa 850 702}%
\special{pa 872 726}%
\special{pa 892 750}%
\special{pa 914 774}%
\special{pa 934 800}%
\special{pa 952 826}%
\special{pa 970 852}%
\special{pa 986 878}%
\special{pa 1002 906}%
\special{pa 1018 934}%
\special{pa 1034 962}%
\special{pa 1046 992}%
\special{pa 1058 1022}%
\special{pa 1070 1052}%
\special{pa 1082 1082}%
\special{pa 1092 1112}%
\special{pa 1100 1142}%
\special{pa 1108 1174}%
\special{pa 1116 1204}%
\special{pa 1122 1236}%
\special{pa 1128 1268}%
\special{pa 1132 1298}%
\special{pa 1136 1330}%
\special{pa 1138 1362}%
\special{pa 1138 1394}%
\special{pa 1138 1426}%
\special{pa 1138 1458}%
\special{pa 1134 1490}%
\special{pa 1132 1522}%
\special{pa 1126 1554}%
\special{pa 1122 1586}%
\special{pa 1114 1616}%
\special{pa 1104 1648}%
\special{pa 1096 1678}%
\special{pa 1084 1708}%
\special{pa 1070 1736}%
\special{pa 1068 1744}%
\special{sp}%
%
\special{pn 8}%
\special{sh 1}%
\special{ar 460 1060 10 10 0  6.28318530717959E+0000}%
%
\special{pn 20}%
\special{sh 1}%
\special{ar 460 1060 10 10 0  6.28318530717959E+0000}%
%
\special{pn 20}%
\special{sh 1}%
\special{ar 610 1430 10 10 0  6.28318530717959E+0000}%
%
\special{pn 20}%
\special{sh 1}%
\special{ar 1070 1740 10 10 0  6.28318530717959E+0000}%
%
\special{pn 20}%
\special{sh 1}%
\special{ar 1350 1750 10 10 0  6.28318530717959E+0000}%
%
\special{pn 20}%
\special{sh 1}%
\special{ar 1720 1600 10 10 0  6.28318530717959E+0000}%
%
\special{pn 20}%
\special{sh 1}%
\special{ar 2040 860 10 10 0  6.28318530717959E+0000}%
%
\special{pn 20}%
\special{sh 1}%
\special{ar 1890 500 10 10 0  6.28318530717959E+0000}%
%
\special{pn 20}%
\special{sh 1}%
\special{ar 1150 180 10 10 0  6.28318530717959E+0000}%
%
\special{pn 20}%
\special{sh 1}%
\special{ar 780 320 10 10 0  6.28318530717959E+0000}%
%
\special{pn 20}%
\special{sh 1}%
\special{ar 590 530 10 10 0  6.28318530717959E+0000}%
%
\special{pn 8}%
\special{pa 1960 1340}%
\special{pa 1854 1376}%
\special{fp}%
\special{pa 1960 1340}%
\special{pa 1960 1452}%
\special{fp}%
%
\special{pn 8}%
\special{pa 1190 360}%
\special{pa 1230 260}%
\special{fp}%
\special{pa 1190 360}%
\special{pa 1112 284}%
\special{fp}%
%
\special{pn 8}%
\special{pa 1630 1460}%
\special{pa 1620 1352}%
\special{fp}%
\special{pa 1630 1460}%
\special{pa 1526 1428}%
\special{fp}%
%
\special{pn 8}%
\special{pa 1020 930}%
\special{pa 1002 1038}%
\special{fp}%
\special{pa 1020 930}%
\special{pa 1112 988}%
\special{fp}%
%
\special{pn 8}%
\special{pa 730 1370}%
\special{pa 836 1344}%
\special{fp}%
\special{pa 730 1370}%
\special{pa 746 1264}%
\special{fp}%
%
\special{pn 8}%
\special{pa 1790 540}%
\special{pa 1684 558}%
\special{fp}%
\special{pa 1790 540}%
\special{pa 1766 646}%
\special{fp}%
\put(4.5000,-5.0000){\makebox(0,0)[lb]{$\overline{\delta}$}}%
\put(8.6000,-18.6000){\makebox(0,0)[lb]{$\delta$}}%
\end{picture}%
\end{center}
\end{figure}

If $a\neq b$,$D(a,b)$ is of maximal index since it has a unique
non-isolated chord dividing the vertices not touching the chord
into $2a$ and $2b$ vertices. Also we have $D(b,a)=-D(a,b)\in \mathcal{C}$,
in particular $D(a,a)=0$.
We denote by $\delta$ and $\overline{\delta}$, the vertices
corresponding to $2a+1$ and $2a+2b+2$, respectively. See Figure 8.
By Lemma \ref{cancel}, for any labeled chord diagram $D$, we have
\begin{equation}
\label{[D,D(a,b)]}
[D,D(a,b)]=\sum_p \mathcal{D}(D,p,D(a,b),\delta)+
\sum_p \mathcal{D}(D,p,D(a,b),\overline{\delta}).
\end{equation}
We shall look into each term in more detail.
The diagram $\mathcal{D}(D,p,D(a,b),\delta)$ is obtained
from $D$ by inserting $b$ isolated chords between $p_-$ and $p$,
and $a$ isolated chords between $p$ and $p_+$.
Similarly the diagram $\mathcal{D}(D,p,D(a,b),\overline{\delta})$ is
obtained from $D$ by inserting $a$ isolated chords between
$p_-$ and $p$, and $b$ isolated chords between $p$ and $p_+$,
and reversing the orientation of the chord through $p$.
Figure 10 is a picture of the results. Here,
for simplicity we write a sequence of $n$ isolated chords as Figure 9.

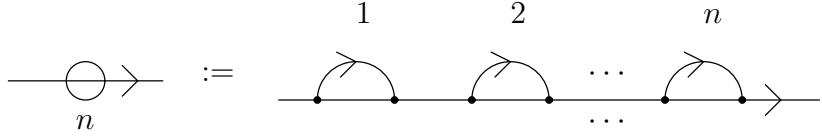
\begin{figure}
\begin{center}
\caption{$n$ isolated chords}

\vspace{0.2cm}

\unitlength 0.1in
\begin{picture}( 42.0000,  6.5000)(  2.0000, -8.8000)
%
\special{pn 20}%
\special{sh 1}%
\special{ar 1800 800 10 10 0  6.28318530717959E+0000}%
%
\special{pn 20}%
\special{sh 1}%
\special{ar 2200 800 10 10 0  6.28318530717959E+0000}%
%
\special{pn 20}%
\special{sh 1}%
\special{ar 2600 800 10 10 0  6.28318530717959E+0000}%
%
\special{pn 20}%
\special{sh 1}%
\special{ar 3000 800 10 10 0  6.28318530717959E+0000}%
%
\special{pn 20}%
\special{sh 1}%
\special{ar 3600 800 10 10 0  6.28318530717959E+0000}%
%
\special{pn 20}%
\special{sh 1}%
\special{ar 4000 800 10 10 0  6.28318530717959E+0000}%
%
\special{pn 8}%
\special{pa 1600 800}%
\special{pa 4400 800}%
\special{fp}%
\put(32.0000,-9.5000){\makebox(0,0)[lb]{$\cdots$}}%
%
\special{pn 8}%
\special{pa 4200 800}%
\special{pa 4120 720}%
\special{fp}%
\special{pa 4200 800}%
\special{pa 4120 880}%
\special{fp}%
%
\special{pn 8}%
\special{ar 2000 800 200 200  3.1415927 6.2831853}%
%
\special{pn 8}%
\special{ar 2800 800 200 200  3.1415927 6.2831853}%
%
\special{pn 8}%
\special{ar 3800 800 200 200  3.1415927 6.2831853}%
%
\special{pn 8}%
\special{pa 2000 600}%
\special{pa 1900 550}%
\special{fp}%
%
\special{pn 8}%
\special{pa 2000 600}%
\special{pa 1920 680}%
\special{fp}%
%
\special{pn 8}%
\special{pa 2800 600}%
\special{pa 2700 550}%
\special{fp}%
%
\special{pn 8}%
\special{pa 2800 600}%
\special{pa 2720 680}%
\special{fp}%
%
\special{pn 8}%
\special{pa 3800 600}%
\special{pa 3700 550}%
\special{fp}%
%
\special{pn 8}%
\special{pa 3800 600}%
\special{pa 3720 680}%
\special{fp}%
\put(20.0000,-4.0000){\makebox(0,0)[lb]{1}}%
\put(28.0000,-4.0000){\makebox(0,0)[lb]{2}}%
\put(38.0000,-4.0000){\makebox(0,0)[lb]{$n$}}%
%
\special{pn 8}%
\special{ar 600 700 100 100  0.0000000 6.2831853}%
%
\special{pn 8}%
\special{pa 870 700}%
\special{pa 790 620}%
\special{fp}%
\special{pa 870 700}%
\special{pa 790 780}%
\special{fp}%
\put(5.5000,-9.5000){\makebox(0,0)[lb]{$n$}}%
\put(12.0000,-7.0000){\makebox(0,0)[lb]{$:=$}}%
%
\special{pn 8}%
\special{pa 200 700}%
\special{pa 1000 700}%
\special{fp}%
\put(32.0000,-7.0000){\makebox(0,0)[lb]{$\cdots$}}%
\end{picture}%
\end{center}
\end{figure}

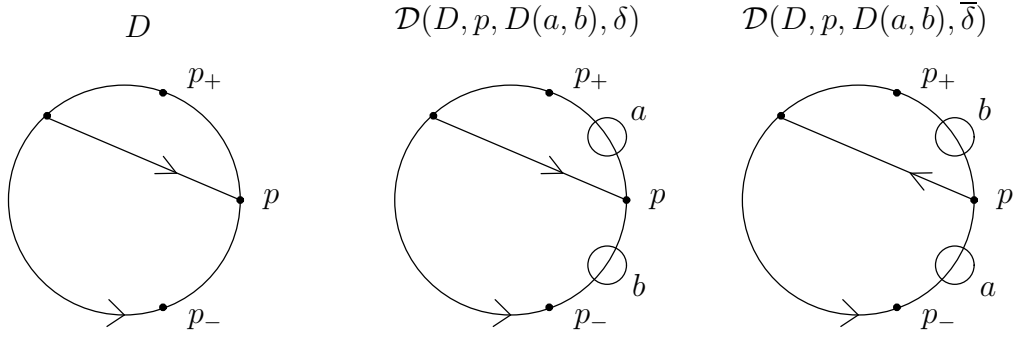
\begin{figure}
\begin{center}
\caption{Inserting isolated chords}

\vspace{0.2cm}

\unitlength 0.1in
\begin{picture}( 51.2000, 16.8000)(  4.0000,-20.6000)
%
\special{pn 8}%
\special{ar 1000 1400 600 600  0.0000000 6.2831853}%
%
\special{pn 8}%
\special{ar 3000 1400 600 600  0.0000000 6.2831853}%
%
\special{pn 8}%
\special{ar 4800 1400 600 600  0.0000000 6.2831853}%
%
\special{pn 20}%
\special{sh 1}%
\special{ar 1600 1400 10 10 0  6.28318530717959E+0000}%
%
\special{pn 20}%
\special{sh 1}%
\special{ar 3600 1400 10 10 0  6.28318530717959E+0000}%
%
\special{pn 20}%
\special{sh 1}%
\special{ar 5400 1400 10 10 0  6.28318530717959E+0000}%
%
\special{pn 20}%
\special{sh 1}%
\special{ar 600 960 10 10 0  6.28318530717959E+0000}%
%
\special{pn 20}%
\special{sh 1}%
\special{ar 2600 960 10 10 0  6.28318530717959E+0000}%
%
\special{pn 20}%
\special{sh 1}%
\special{ar 4400 960 10 10 0  6.28318530717959E+0000}%
%
\special{pn 20}%
\special{sh 1}%
\special{ar 1200 840 10 10 0  6.28318530717959E+0000}%
%
\special{pn 20}%
\special{sh 1}%
\special{ar 3200 840 10 10 0  6.28318530717959E+0000}%
%
\special{pn 20}%
\special{sh 1}%
\special{ar 5000 840 10 10 0  6.28318530717959E+0000}%
%
\special{pn 20}%
\special{sh 1}%
\special{ar 5000 1960 10 10 0  6.28318530717959E+0000}%
%
\special{pn 20}%
\special{sh 1}%
\special{ar 3200 1960 10 10 0  6.28318530717959E+0000}%
%
\special{pn 20}%
\special{sh 1}%
\special{ar 1200 1960 10 10 0  6.28318530717959E+0000}%
%
\special{pn 8}%
\special{pa 1000 2000}%
\special{pa 920 1920}%
\special{fp}%
\special{pa 1000 2000}%
\special{pa 910 2060}%
\special{fp}%
%
\special{pn 8}%
\special{pa 3000 2000}%
\special{pa 2920 1920}%
\special{fp}%
\special{pa 3000 2000}%
\special{pa 2910 2060}%
\special{fp}%
%
\special{pn 8}%
\special{pa 4800 2000}%
\special{pa 4720 1920}%
\special{fp}%
\special{pa 4800 2000}%
\special{pa 4710 2060}%
\special{fp}%
%
\special{pn 8}%
\special{pa 1600 1400}%
\special{pa 600 970}%
\special{fp}%
%
\special{pn 8}%
\special{pa 3600 1400}%
\special{pa 2600 970}%
\special{fp}%
%
\special{pn 8}%
\special{pa 5400 1400}%
\special{pa 4400 970}%
\special{fp}%
\put(10.0000,-5.5000){\makebox(0,0)[lb]{$D$}}%
\put(24.0000,-5.5000){\makebox(0,0)[lb]{$\mathcal{D}(D,p,D(a,b),\delta)$}}%
\put(42.0000,-5.5000){\makebox(0,0)[lb]{$\mathcal{D}(D,p,D(a,b),\overline{\delta})$}}%
%
\special{pn 8}%
\special{ar 3500 1740 100 100  0.0000000 6.2831853}%
%
\special{pn 8}%
\special{ar 5300 1740 100 100  0.0000000 6.2831853}%
%
\special{pn 8}%
\special{ar 5300 1070 100 100  0.0000000 6.2831853}%
%
\special{pn 8}%
\special{ar 3500 1070 100 100  0.0000000 6.2831853}%
\put(36.3000,-19.0000){\makebox(0,0)[lb]{$b$}}%
\put(36.2000,-9.8000){\makebox(0,0)[lb]{$a$}}%
\put(54.2000,-9.8000){\makebox(0,0)[lb]{$b$}}%
\put(54.3000,-19.0000){\makebox(0,0)[lb]{$a$}}%
\put(17.2000,-14.5000){\makebox(0,0)[lb]{$p$}}%
\put(37.2000,-14.5000){\makebox(0,0)[lb]{$p$}}%
\put(55.2000,-14.5000){\makebox(0,0)[lb]{$p$}}%
\put(13.3000,-20.8000){\makebox(0,0)[lb]{$p_-$}}%
\put(33.3000,-20.8000){\makebox(0,0)[lb]{$p_-$}}%
\put(51.3000,-20.8000){\makebox(0,0)[lb]{$p_-$}}%
\put(13.3000,-8.2000){\makebox(0,0)[lb]{$p_+$}}%
\put(33.3000,-8.2000){\makebox(0,0)[lb]{$p_+$}}%
\put(51.3000,-8.2000){\makebox(0,0)[lb]{$p_+$}}%
%
\special{pn 8}%
\special{pa 1270 1260}%
\special{pa 1202 1172}%
\special{fp}%
\special{pa 1270 1260}%
\special{pa 1158 1262}%
\special{fp}%
%
\special{pn 8}%
\special{pa 3270 1260}%
\special{pa 3202 1172}%
\special{fp}%
\special{pa 3270 1260}%
\special{pa 3158 1262}%
\special{fp}%
%
\special{pn 8}%
\special{pa 5070 1260}%
\special{pa 5140 1348}%
\special{fp}%
\special{pa 5070 1260}%
\special{pa 5182 1258}%
\special{fp}%
\end{picture}%
\end{center}
\end{figure}

For a while, fix $m\ge1$ and let $a=m$, $b=2m+1$. The following
two lemmas are the key in the sequel.

\begin{lem}
\label{hodai1}
Let $D_1$ and $D_2$ be labeled chord diagrams of $m$ chords,
and let $p_1$ and $p_2$ be vertices of $D_1$ and $D_2$, respectively.
Suppose $\mathcal{D}(D_1,p_1,D(a,b),\delta)=
\pm \mathcal{D}(D_2,p_2,D(a,b),\delta)$ or
$\mathcal{D}(D_1,p_1,D(a,b),\overline{\delta})=
\pm \mathcal{D}(D_2,p_2,D(a,b),\overline{\delta})$, in $\mathcal{C}$.
Then the cuts $C(D_1,p_1)$ and $C(D_2,p_2)$ are equal in $\mathcal{LC}_m$
up to sign. In particular, $D_1=\pm D_2\in \mathcal{C}_m$.
\end{lem}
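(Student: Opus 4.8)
\emph{Proof plan.} The plan is to reconstruct the cut $C(D_i,p_i)$ from the closed labeled chord diagram $E_i:=\mathcal{D}(D_i,p_i,D(a,b),\delta)$ by a recipe depending only on the underlying \emph{unoriented} diagram $\overline{E_i}$; since $E_1=\pm E_2$ in $\mathcal{C}$ by hypothesis, this will at once give $C(D_1,p_1)=\pm C(D_2,p_2)$ in $\mathcal{LC}_m$ and hence $D_1=\pm D_2$ in $\mathcal{C}_m$.

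First I would use the explicit shape of $E_i$ recalled just before the statement: $E_i$ is $D_i$ with a block of $b=2m+1$ isolated chords inserted immediately before $p_i$ and a block of $a=m$ isolated chords inserted immediately after $p_i$, the chord through $p_i$ surviving and, as $a,b\ge 1$, being non-isolated in $E_i$. Call a maximal arc of consecutive isolated chords of $\overline{E_i}$ a \emph{run}. Every isolated chord of $E_i$ lies in the inserted $b$-block, in the inserted $a$-block, or is an isolated chord of $D_i$ distinct from the one through $p_i$ (which stays isolated). I would then bound the runs: a run disjoint from both inserted blocks lies inside $D_i$, so has at most $m$ chords; a run meeting the $a$-block cannot be continued past $p_i$, so it is the $a$-block together with at most $m$ further isolated chords of $D_i$, i.e.\ at most $a+m=2m$ chords; and the $b$-block and $a$-block lie in different runs because $p_i$ separates them. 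Hence the run containing the $b$-block is the \emph{unique} run with at least $2m+1$ chords, and $p_i$ is the vertex of $\overline{E_i}$ immediately following this run along the oriented core.

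Next I would note that ``isolated chord'', ``run'', and ``the vertex after the unique longest run'' are features of $\overline{E_i}$ alone, so the distinguished vertex $p_i$ and the deletion step below are invariants of $\overline{E_i}$. Also, any rotational symmetry of $\overline{E_i}$ must carry the unique long run to itself and is therefore trivial, so $E_i$ has maximal index; by Lemma~\ref{odd} it is nonzero in $\mathcal{C}$, and the hypothesis $E_1=\pm E_2$ thus forces $\overline{E_1}=\overline{E_2}$ with chord orientations agreeing up to one global sign (the cyclic cuts of distinct closed diagrams are linearly independent, cf.\ the proof of Lemma~\ref{odd}). Finally, from $E_i$ I would delete the $2b$ vertices preceding $p_i$ and the $2a$ vertices following $p_i$ together with the chords through them --- precisely the two inserted blocks, all isolated --- and cut the result at $p_i$; by construction this returns $C(D_i,p_i)\in\mathcal{LC}_m$, up to the global sign. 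Carrying out this recipe on the two equal diagrams yields $C(D_1,p_1)=\pm C(D_2,p_2)$. The $\overline{\delta}$ case is handled identically, with the roles of the $a$- and $b$-blocks around $p_i$ interchanged and the chord through $p_i$ reversed.

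The hard part will be the run-length count in the second step: I must make sure that runs of isolated chords native to $D_i$ which merge with an inserted block cannot push its length up to $2m+1$ --- in particular in the extremal case $D_i=\Omega_m$, where the chord through $p_i$ is itself isolated in $D_i$ --- so that the choice $b=2m+1$ indeed pins down a single run.
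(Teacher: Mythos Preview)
Your proof is correct and complete. It differs from the paper's argument in the choice of distinguishing feature used to reconstruct $C(D_i,p_i)$ from $E_i=\mathcal{D}(D_i,p_i,D(a,b),\delta)$.

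The paper singles out the \emph{chord} $\{p_i,\overline{p}_i\}$: it observes that this is the unique chord of $E_i$ whose minor arc contains at least $2m$ vertices (any other chord of $D_i$ has a minor arc missing $p_i$ and both inserted blocks, hence with at most $2m-3$ vertices; chords inside the blocks are isolated). The two arcs of this chord then have sizes in $[2m,4m-2]$ and $[4m+2,6m]$ respectively, which distinguishes $p_i$ from $\overline{p}_i$ and yields maximal index.

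You instead single out the unique long \emph{run of isolated chords}: since the chord through $p_i$ is non-isolated in $E_i$, the $a$-block and $b$-block lie in different runs, and the $a$-block's run can absorb at most $m-1$ further chords of $D_i$ (your bound $a+m=2m$ is slightly loose but still below $b=2m+1$), leaving the $b$-block's run as the sole run of length at least $2m+1$. Then $p_i$ is the vertex immediately following this run along the oriented core. This identifies $p_i$ directly, without the intermediate step of first finding $\{p_i,\overline{p}_i\}$ and then separating its endpoints. The maximal-index conclusion and the deletion recipe are the same in spirit as the paper's. Your closing worry about $D_i=\Omega_m$ is already handled by your argument: the chord through $p_i$ becoming non-isolated in $E_i$ is exactly what prevents the $a$-run from reaching length $2m+1$, and guarantees that the $a$-run and $b$-run stay separate on both sides of the circle.
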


\begin{proof}
We only consider the case $\mathcal{D}(D_1,p_1,D(a,b),\delta)=
\pm \mathcal{D}(D_2,p_2,D(a,b),\delta)$.
We draw a picture of $C(D_i,p_i)$ as Figure 11.
Here $D_i^{\prime}$ is the part of $C(D_i,p_i)$ between $p_i$ and
$\overline{p_i}$, $D_i^{\prime \prime}$ the part on the right of
$\overline{p_i}$, and the dotted line indicates the chords
connecting the vertices in $D_i^{\prime}$ and $D_i^{\prime \prime}$.
Then the diagrams $\mathcal{D}(D_i,p_i,D(a,b),\delta)$,
$i=1,2$ look like Figure 12.

\begin{figure}
\begin{center}
\caption{the cut $C(D_i,p_i)$}

\vspace{0.2cm}

\unitlength 0.1in
\begin{picture}( 26.0000,  7.5000)(  4.0000, -9.5000)
%
\special{pn 8}%
\special{pa 400 800}%
\special{pa 3000 800}%
\special{fp}%
%
\special{pn 20}%
\special{sh 1}%
\special{ar 600 800 10 10 0  6.28318530717959E+0000}%
%
\special{pn 8}%
\special{pa 800 900}%
\special{pa 1600 900}%
\special{pa 1600 700}%
\special{pa 800 700}%
\special{pa 800 900}%
\special{fp}%
%
\special{pn 8}%
\special{pa 2000 900}%
\special{pa 2600 900}%
\special{pa 2600 700}%
\special{pa 2000 700}%
\special{pa 2000 900}%
\special{fp}%
%
\special{pn 20}%
\special{sh 1}%
\special{ar 1800 800 10 10 0  6.28318530717959E+0000}%
%
\special{pn 8}%
\special{ar 1200 800 600 600  3.1415927 6.2831853}%
%
\special{pn 8}%
\special{pa 2800 800}%
\special{pa 2720 720}%
\special{fp}%
\special{pa 2800 800}%
\special{pa 2720 880}%
\special{fp}%
%
\special{pn 8}%
\special{ar 1800 800 400 400  3.1415927 3.2915927}%
\special{ar 1800 800 400 400  3.3815927 3.5315927}%
\special{ar 1800 800 400 400  3.6215927 3.7715927}%
\special{ar 1800 800 400 400  3.8615927 4.0115927}%
\special{ar 1800 800 400 400  4.1015927 4.2515927}%
\special{ar 1800 800 400 400  4.3415927 4.4915927}%
\special{ar 1800 800 400 400  4.5815927 4.7315927}%
\special{ar 1800 800 400 400  4.8215927 4.9715927}%
\special{ar 1800 800 400 400  5.0615927 5.2115927}%
\special{ar 1800 800 400 400  5.3015927 5.4515927}%
\special{ar 1800 800 400 400  5.5415927 5.6915927}%
\special{ar 1800 800 400 400  5.7815927 5.9315927}%
\special{ar 1800 800 400 400  6.0215927 6.1715927}%
\special{ar 1800 800 400 400  6.2615927 6.2831853}%
\put(5.5000,-10.0000){\makebox(0,0)[lb]{$p_i$}}%
\put(17.5000,-10.0000){\makebox(0,0)[lb]{$\overline{p_i}$}}%
\put(11.0000,-11.2000){\makebox(0,0)[lb]{$D_i^{\prime}$}}%
\put(22.0000,-11.2000){\makebox(0,0)[lb]{$D_i^{\prime \prime}$}}%
\end{picture}%
\end{center}
\end{figure}
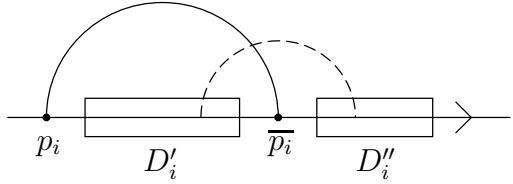

\begin{figure}
\begin{center}
\caption{the two diagrams $\mathcal{D}(D_i,p_i,D(a,b),\delta)$}

\vspace{0.2cm}

\unitlength 0.1in
\begin{picture}( 37.8000, 20.3500)(  3.1000,-23.3500)
%
\special{pn 8}%
\special{ar 1190 1600 600 600  0.0000000 6.2831853}%
%
\special{pn 8}%
\special{pa 1760 1376}%
\special{pa 1736 1398}%
\special{pa 1712 1418}%
\special{pa 1688 1440}%
\special{pa 1662 1458}%
\special{pa 1634 1474}%
\special{pa 1608 1492}%
\special{pa 1580 1508}%
\special{pa 1550 1520}%
\special{pa 1522 1534}%
\special{pa 1492 1548}%
\special{pa 1462 1558}%
\special{pa 1432 1568}%
\special{pa 1400 1576}%
\special{pa 1370 1582}%
\special{pa 1338 1588}%
\special{pa 1306 1592}%
\special{pa 1274 1598}%
\special{pa 1242 1600}%
\special{pa 1210 1600}%
\special{pa 1178 1600}%
\special{pa 1148 1598}%
\special{pa 1116 1596}%
\special{pa 1084 1592}%
\special{pa 1052 1586}%
\special{pa 1020 1580}%
\special{pa 990 1570}%
\special{pa 960 1562}%
\special{pa 928 1552}%
\special{pa 898 1540}%
\special{pa 870 1526}%
\special{pa 840 1514}%
\special{pa 812 1498}%
\special{pa 786 1480}%
\special{pa 758 1466}%
\special{pa 732 1448}%
\special{pa 706 1426}%
\special{pa 682 1406}%
\special{pa 658 1384}%
\special{pa 636 1362}%
\special{pa 632 1360}%
\special{sp}%
%
\special{pn 20}%
\special{sh 1}%
\special{ar 644 1362 10 10 0  6.28318530717959E+0000}%
%
\special{pn 20}%
\special{sh 1}%
\special{ar 1744 1390 10 10 0  6.28318530717959E+0000}%
%
\special{pn 8}%
\special{ar 3420 1600 600 600  0.0000000 6.2831853}%
%
\special{pn 8}%
\special{pa 4000 1436}%
\special{pa 3976 1458}%
\special{pa 3952 1478}%
\special{pa 3928 1498}%
\special{pa 3900 1516}%
\special{pa 3874 1534}%
\special{pa 3846 1550}%
\special{pa 3818 1566}%
\special{pa 3790 1580}%
\special{pa 3760 1590}%
\special{pa 3730 1600}%
\special{pa 3698 1610}%
\special{pa 3668 1616}%
\special{pa 3636 1622}%
\special{pa 3604 1624}%
\special{pa 3572 1626}%
\special{pa 3540 1624}%
\special{pa 3508 1622}%
\special{pa 3476 1616}%
\special{pa 3446 1608}%
\special{pa 3416 1598}%
\special{pa 3386 1586}%
\special{pa 3356 1572}%
\special{pa 3328 1556}%
\special{pa 3302 1538}%
\special{pa 3278 1520}%
\special{pa 3254 1498}%
\special{pa 3230 1476}%
\special{pa 3210 1450}%
\special{pa 3190 1426}%
\special{pa 3172 1400}%
\special{pa 3154 1372}%
\special{pa 3140 1344}%
\special{pa 3126 1316}%
\special{pa 3112 1286}%
\special{pa 3102 1256}%
\special{pa 3092 1226}%
\special{pa 3084 1194}%
\special{pa 3076 1164}%
\special{pa 3070 1132}%
\special{pa 3068 1112}%
\special{sp}%
%
\special{pn 20}%
\special{sh 1}%
\special{ar 3070 1118 10 10 0  6.28318530717959E+0000}%
%
\special{pn 20}%
\special{sh 1}%
\special{ar 3996 1440 10 10 0  6.28318530717959E+0000}%
%
\special{pn 8}%
\special{ar 3420 1600 500 500  4.2619158 5.0467971}%
%
\special{pn 8}%
\special{ar 3420 1600 700 700  4.2619158 5.0472432}%
%
\special{pn 8}%
\special{pa 3206 1156}%
\special{pa 3112 964}%
\special{fp}%
%
\special{pn 8}%
\special{pa 3584 1128}%
\special{pa 3650 940}%
\special{fp}%
%
\special{pn 8}%
\special{ar 3860 1192 100 100  0.0000000 6.2831853}%
%
\special{pn 8}%
\special{ar 3420 1600 500 500  2.2858696 3.7547376}%
%
\special{pn 8}%
\special{ar 3420 1600 700 700  2.2858696 3.7547376}%
%
\special{pn 8}%
\special{pa 3000 1316}%
\special{pa 2840 1198}%
\special{fp}%
%
\special{pn 8}%
\special{pa 3090 1972}%
\special{pa 2962 2130}%
\special{fp}%
%
\special{pn 8}%
\special{ar 3680 2136 200 200  0.0000000 6.2831853}%
%
\special{pn 8}%
\special{pa 3262 2180}%
\special{pa 3206 2082}%
\special{fp}%
\special{pa 3262 2180}%
\special{pa 3158 2214}%
\special{fp}%
%
\special{pn 8}%
\special{ar 1190 1600 500 500  3.8021091 4.9738513}%
%
\special{pn 8}%
\special{ar 1190 1600 700 700  3.8021091 4.9738513}%
%
\special{pn 8}%
\special{pa 1318 1122}%
\special{pa 1362 932}%
\special{fp}%
%
\special{pn 8}%
\special{pa 796 1292}%
\special{pa 648 1170}%
\special{fp}%
%
\special{pn 8}%
\special{ar 1588 1158 100 100  0.0000000 6.2831853}%
%
\special{pn 8}%
\special{ar 1190 1600 500 500  1.9692287 3.1576233}%
%
\special{pn 8}%
\special{ar 1190 1600 700 700  1.9692287 3.1587338}%
%
\special{pn 8}%
\special{pa 692 1592}%
\special{pa 490 1588}%
\special{fp}%
%
\special{pn 8}%
\special{pa 992 2056}%
\special{pa 918 2234}%
\special{fp}%
%
\special{pn 8}%
\special{ar 1582 2068 200 200  0.0000000 6.2831853}%
%
\special{pn 8}%
\special{pa 1220 2200}%
\special{pa 1136 2124}%
\special{fp}%
\special{pa 1220 2200}%
\special{pa 1132 2264}%
\special{fp}%
\put(18.4000,-14.1000){\makebox(0,0)[lb]{$p_1$}}%
\put(17.0000,-10.5000){\makebox(0,0)[lb]{$a$}}%
%
\special{pn 8}%
\special{ar 450 1260 540 540  5.8925783 6.0036894}%
\special{ar 450 1260 540 540  6.0703560 6.1814672}%
\special{ar 450 1260 540 540  6.2481338 6.3592449}%
\special{ar 450 1260 540 540  6.4259116 6.5370227}%
\special{ar 450 1260 540 540  6.6036894 6.7148005}%
\special{ar 450 1260 540 540  6.7814672 6.8925783}%
\special{ar 450 1260 540 540  6.9592449 7.0703560}%
\special{ar 450 1260 540 540  7.1370227 7.2481338}%
\special{ar 450 1260 540 540  7.3148005 7.4259116}%
\special{ar 450 1260 540 540  7.4925783 7.5380114}%
%
\special{pn 8}%
\special{ar 2870 1050 470 470  6.1982835 6.3259431}%
\special{ar 2870 1050 470 470  6.4025388 6.5301984}%
\special{ar 2870 1050 470 470  6.6067942 6.7344537}%
\special{ar 2870 1050 470 470  6.8110495 6.9387090}%
\special{ar 2870 1050 470 470  7.0153048 7.1429644}%
\special{ar 2870 1050 470 470  7.2195601 7.3472197}%
\special{ar 2870 1050 470 470  7.4238154 7.5514750}%
\special{ar 2870 1050 470 470  7.6280707 7.7557303}%
\special{ar 2870 1050 470 470  7.8323261 7.9407200}%
\put(18.4000,-21.9000){\makebox(0,0)[lb]{$b$}}%
\put(39.5000,-21.9000){\makebox(0,0)[lb]{$b$}}%
\put(39.6000,-10.8000){\makebox(0,0)[lb]{$a$}}%
\put(40.9000,-14.2000){\makebox(0,0)[lb]{$p_2$}}%
\put(28.5000,-11.0000){\makebox(0,0)[lb]{$\overline{p_2}$}}%
\put(4.1000,-13.9000){\makebox(0,0)[lb]{$\overline{p_1}$}}%
\put(8.1000,-8.6000){\makebox(0,0)[lb]{$D_1^{\prime}$}}%
\put(3.1000,-20.6000){\makebox(0,0)[lb]{$D_1^{\prime \prime}$}}%
\put(24.6000,-17.5000){\makebox(0,0)[lb]{$D_2^{\prime \prime}$}}%
\put(33.0000,-8.4000){\makebox(0,0)[lb]{$D_2^{\prime}$}}%
\put(4.5000,-4.7000){\makebox(0,0)[lb]{$\mathcal{D}(D_1,p_1,D(a,b),\delta)$}}%
\put(26.4000,-4.7000){\makebox(0,0)[lb]{$\mathcal{D}(D_2,p_2,D(a,b),\delta)$}}%
\end{picture}%
\end{center}
\end{figure}

Observe that both the diagrams
have a unique chord such that the number of vertices
in the interior of the minor arc determined by the ends of the chord
is $\ge 2m$. Namely, the chords $\{ p_i, \overline{p_i}\}$, $i=1,2$.
Moreover, the number of the vertices in the interior of the
arc $\overline{p_i}p_i$ is $\ge 2(2m+1)$, and
that of the arc $p_i\overline{p_i}$ is $\le 4m$.
These imply the diagrams $\mathcal{D}(D_i,p_i,D(a,b),\delta)$
are of maximal index, and by assumption these two diagrams 
must coincide when we forget the labels of chords, and
the isomorphism between the two diagrams must maps
$p_1$ to $p_2$ and $\overline{p_1}$ to $\overline{p_2}$.
We conclude $C(D_1,p_1)=\pm C(D_2,p_2)$, and this proves the lemma.
\end{proof}

\begin{lem}
\label{hodai2}
Let $D_1$ and $D_2$ be labeled chord diagrams of $m$ chords,
and let $p_1$ and $p_2$ be vertices of $D_1$ and $D_2$, respectively.
Suppose $\mathcal{D}(D_1,p_1,D(a,b),\delta)=
\pm \mathcal{D}(D_2,p_2,D(a,b),\overline{\delta})$ in $\mathcal{C}$.
Then one of the following two occurs: 1) $D_1=D_2=\Omega_m$ up
to sign, and $p_1$ corresponds to an odd (resp.\ even) vertex
and so does $p_2$ to an even (resp.\ odd) one, or
2) there exist $c,d\ge 1$ such that we have
$D_1=D_2=D(c,d)$ up to sign, and
$p_1,p_2$ correspond to $\delta,\overline{\delta}$,
respectively.
\end{lem}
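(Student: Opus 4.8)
The plan is to extract from the hypothesis an isomorphism of cyclic chord diagrams and then carry out a rigidity argument in the spirit of the proof of Lemma~\ref{hodai1}, pushed one step further. First I would verify that both $\mathcal{D}(D_1,p_1,D(a,b),\delta)$ and $\mathcal{D}(D_2,p_2,D(a,b),\overline{\delta})$ are of maximal index: for the first this is exactly the computation in the proof of Lemma~\ref{hodai1}---with $a=m$ and $b=2m+1$ the chord through $p_1$ is the only chord whose minor arc contains at least $2m$ vertices, and its two arcs contain at least $2(2m+1)$ and at most $4m$ vertices, so no nontrivial rotation of the circle of $8m+2$ vertices is a symmetry---and the same argument applies to the second diagram with the chord through $p_2$ in that role. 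Hence both are nonzero in $\mathcal{C}$ (Lemma~\ref{odd}), so the assumed equality forces the two underlying unoriented cyclic chord diagrams to be isomorphic, and by maximal index such an isomorphism $\phi$ is a rotation of the vertex circle, uniquely determined.

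Next I would locate the ``long run''. Among the $4m+1$ chords of $\mathcal{D}(D_1,p_1,D(a,b),\delta)$ the maximal run of isolated chords containing the inserted $b$-run of $2m+1$ chords has length at least $2m+1$, while every other maximal run of isolated chords has length at most $2m$, since such a run lies either inside $D_1$ or consists of the inserted $a$-run together with some isolated chords of $D_1$, and $D_1$ has only $m$ chords; the same dichotomy holds for $\mathcal{D}(D_2,p_2,D(a,b),\overline{\delta})$. Since $\phi$ carries maximal runs of isolated chords to maximal runs of equal length, it carries the long run of the first diagram onto that of the second respecting the orientation of the core, and being a rotation it is thereby completely determined. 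Writing $j$ for the number of isolated chords of $D_1$ that extend the $b$-run inside the long run, so the long run has length $2m+1+j$, one has $0\le j\le m-1$ (those $j$ chords occupy $2j$ of the $2m$ vertices of $D_1$ and avoid $p_1$ and its chord-partner), and the same $j$ appears on the $D_2$ side.

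The heart of the matter, and the step I expect to require the most care, is the rigidity. Here I would track the images under $\phi$ of the remaining data: the inserted $a$-run, the chord $\{p_1,\overline{p_1}\}$ through the cut vertex (with $\overline{p_i}$ the other endpoint of the chord of $D_i$ through $p_i$), and the remaining $m-1-j$ chords of $D_1$. Since passing from $\delta$ to $\overline{\delta}$ interchanges the two inserted runs and reverses the chord through the cut vertex, the long run of the first diagram lies positively before $p_1$ whereas that of the second lies positively after $p_2$; matching them and using that $\phi$ respects chords one finds $\phi(\overline{p_1})=p_2$ and $\phi(p_1)=\overline{p_2}$, hence $\phi$ carries $\{p_1,\overline{p_1}\}$ onto $\{p_2,\overline{p_2}\}$ and the positive arc from $p_1$ to $\overline{p_1}$ carrying the $a$-run onto the positive arc from $\overline{p_2}$ to $p_2$ carrying the inserted $a$-run of the second diagram. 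For $\phi$ to be an isomorphism the $m-1-j$ chords of $D_1$ other than $\{p_1,\overline{p_1}\}$ and the long-run extension must then all be isolated, and symmetrically for $D_2$. Reading this back, the chord $\{p_1,\overline{p_1}\}$ divides $D_1$ into an arc carrying $j$ isolated chords and an arc carrying $m-1-j$ isolated chords, and likewise for $D_2$. Turning ``$\phi$ respects chords'' into this statement is where the work lies: one must bookkeep the cyclic positions of the two inserted runs relative to $\{p_i,\overline{p_i}\}$---especially at the interface between the $D_i$-part and the inserted runs---and treat by hand the degenerate values $j=0$ and $j=m-1$, drawing pictures in the style of Figures 11 and 12.

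Finally I would read off the conclusion. If $1\le j\le m-2$ the description above is precisely that $D_1=D_2=D(j,m-1-j)$ up to sign, with $p_1$ playing the role of the vertex $\delta$ and $p_2$ that of $\overline{\delta}$; this is alternative 2) with $(c,d)=(j,m-1-j)$. If $j=0$ or $j=m-1$ one of the two arcs is empty, so $\{p_i,\overline{p_i}\}$ is an isolated chord and $D_1=D_2=\Omega_m$ up to sign; inspecting which neighbour of $p_i$ equals $\overline{p_i}$ shows it is the successor in one case and the predecessor in the other, and since in $\Omega_m$ a vertex is joined to its successor exactly when it is odd, $p_1$ and $p_2$ come out with opposite parities, which is alternative 1).
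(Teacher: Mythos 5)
Your proposal is correct and follows essentially the same route as the paper's proof: maximal index of both amalgamated diagrams (via the computation in the proof of Lemma \ref{hodai1}) forces a unique orientation-preserving isomorphism of the underlying unlabeled diagrams sending $p_1\mapsto\overline{p_2}$ and $\overline{p_1}\mapsto p_2$, whence all chords of $D_1$ and $D_2$ other than the one through the cut vertex must be isolated, and the dichotomy $\Omega_m$ versus $D(c,d)$ falls out of whether one of the two sides is empty. Your variant of pinning down the rotation via the unique longest run of isolated chords (rather than directly via the unique long chord and its unequal sides) is a minor, and correct, repackaging of the same argument, and your extra care about nonvanishing in $\mathcal{C}$ and the parity bookkeeping for $\Omega_m$ fills in details the paper leaves implicit.
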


\begin{proof}
The picture of the diagram $\mathcal{D}(D_2,p_2,D(a,b),\overline{\delta})$
is obtained from the right diagram in Figure 12 by exchanging the role
of $a$ and $b$. By the same reason as before
this diagram is of maximal index. If we forget the labels of chords,
the diagrams $\mathcal{D}(D_1,p_1,D(a,b),\delta)$ and
$\mathcal{D}(D_2,p_2,D(a,b),\overline{\delta})$ must be isomorphic
by a unique map which maps $p_1$ to $\overline{p}_2$ and
$\overline{p}_1$ to $p_2$. Then $D_1^{\prime}, D_1^{\prime \prime}, D_2^{\prime},
D_2^{\prime \prime}$ must only have isolated chords.
If $D_1^{\prime}$ or $D_1^{\prime \prime}$
are the empty diagrams, the first conclusion follows.
If $D_1^{\prime}$ and $D_1^{\prime \prime}$ are both non-empty,
the second conclusion follows.
\end{proof}

As a corollary of the above two lemmas, we have:

\begin{cor}
\label{kekkyoku}
Let $D$ and $D^{\prime}$ be labeled chord diagrams of $m$ chords,
$p$ and $p^{\prime}$ vertices of $D$ and $D^{\prime}$, respectively,
and let $d,d^{\prime}\in \{ \delta, \overline{\delta} \}$. Suppose
$\mathcal{D}(D,p,D(a,b),d)=
\pm \mathcal{D}(D^{\prime},p^{\prime},D(a,b),d^{\prime})$ in
$\mathcal{C}$. Then $D=\pm D^{\prime} \in \mathcal{C}_m$.
\end{cor}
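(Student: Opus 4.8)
The plan is to deduce the statement from Lemmas \ref{hodai1} and \ref{hodai2} by a short case analysis on the pair $(d,d')$, exploiting the symmetry in $D$ and $D'$ to halve the number of cases.

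First I would record the elementary but essential observation that $\mathcal{D}(D,p,D(a,b),d)$ is a nonzero element of $\mathcal{C}$. Indeed, as shown in the proof of Lemma \ref{hodai1} (for the present choice $a=m$, $b=2m+1$), this diagram is of maximal index, hence of even index, and so it is not annihilated by the relation of Lemma \ref{odd}. The same holds for $\mathcal{D}(D',p',D(a,b),d')$. Consequently the hypothesis $\mathcal{D}(D,p,D(a,b),d)=\pm\mathcal{D}(D',p',D(a,b),d')$ is an equality of genuinely nonzero elements of $\mathcal{C}$, and no degenerate subcase can arise.

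If $d=d'$: when $d=d'=\delta$ the hypothesis is exactly the first alternative in the hypothesis of Lemma \ref{hodai1}, and when $d=d'=\overline{\delta}$ it is the second; in either case Lemma \ref{hodai1} gives $D=\pm D'\in\mathcal{C}_m$. If $d\neq d'$: after possibly rewriting the equation with $D$ and $D'$ interchanged, we may assume $d=\delta$ and $d'=\overline{\delta}$, so that Lemma \ref{hodai2} applies with $D_1=D$ and $D_2=D'$. In its first conclusion $D=D'=\pm\Omega_m$, and in its second conclusion $D=D'=\pm D(c,e)$ for some integers $c,e\ge1$; in both cases $D=\pm D'\in\mathcal{C}_m$. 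These cases are exhaustive, which proves the corollary.

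I do not expect any real obstacle here: the combinatorial content has already been extracted in Lemmas \ref{hodai1} and \ref{hodai2}, and what remains is only to match the four configurations of $(d,d')$ to the hypotheses of those lemmas. The single point deserving care is the nonvanishing of $\mathcal{D}(D,p,D(a,b),d)$, which must be checked before one is entitled to invoke the lemmas, and which is immediate from the maximal-index property together with Lemma \ref{odd}.
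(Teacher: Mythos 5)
Your case analysis on $(d,d')$, reducing $d=d'$ to Lemma \ref{hodai1} and the mixed case (after swapping $D$ and $D'$) to Lemma \ref{hodai2}, is exactly how the paper intends the corollary to follow; it is stated there without proof as an immediate consequence of those two lemmas, so your argument matches the paper's approach. The preliminary nonvanishing check via the maximal-index property and Lemma \ref{odd} is harmless but not actually required, since both lemmas take the equality in $\mathcal{C}$ as their hypothesis verbatim.
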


Now we are able to determine the center of $\mathcal{C}$.

\begin{thm}
\label{Z(C)}
$$Z(\mathcal{C})=\prod_{m\ge 2} \mathbb{Q}\Omega_m.$$
\end{thm}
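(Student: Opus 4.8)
The plan is to prove the two inclusions $\prod_{m\ge 2}\mathbb{Q}\Omega_m\subseteq Z(\mathcal{C})$ and $Z(\mathcal{C})\subseteq\prod_{m\ge 2}\mathbb{Q}\Omega_m$ separately. The first is essentially already in hand: every chord of $\Omega_m$ is isolated, so Lemma \ref{cancel} gives $[D,\Omega_m]=0$ for every labeled chord diagram $D$, hence $\Omega_m\in Z(\mathcal{C})$ for all $m\ge 2$. For the reverse inclusion I would first reduce to a single degree. Since $[\mathcal{C}_m,\mathcal{C}_{m'}]\subseteq\mathcal{C}_{m+m'-1}$ by Proposition \ref{C-is-Liealg} and the degrees $m+m'-1$ are pairwise distinct as $m$ varies with $m'$ fixed, an element $x=(x_m)_m\in\mathcal{C}=\prod_m\mathcal{C}_m$ is central if and only if each homogeneous component $x_m$ is. As each $\mathcal{C}_m$ is finite dimensional, it therefore suffices to prove $Z(\mathcal{C})\cap\mathcal{C}_m=\mathbb{Q}\Omega_m$ for every fixed $m\ge 2$.

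So fix $m\ge 2$, set $(a,b)=(m,2m+1)$ as in the paragraph preceding Lemma \ref{hodai1}, and let $0\ne x\in Z(\mathcal{C})\cap\mathcal{C}_m$. Write $x=\sum_D c_D D$ as a finite linear combination of pairwise non-proportional labeled chord diagrams $D$ of $m$ chords with all $c_D\ne 0$; by Lemma \ref{odd} each such $D$ has even index. Bracketing with the test element $D(a,b)\in\mathcal{C}_{3m+2}$, centrality yields $\sum_D c_D[D,D(a,b)]=0$ in $\mathcal{C}_{4m+1}$. The crucial point is that the brackets $[D,D(a,b)]$, for distinct $D$ appearing in $x$, are supported on pairwise disjoint sets of diagrams: by (\ref{[D,D(a,b)]}) every summand of $[D,D(a,b)]$ has the form $\mathcal{D}(D,p,D(a,b),d)$ with $d\in\{\delta,\overline{\delta}\}$, and Corollary \ref{kekkyoku} says such a diagram determines $D$ up to sign. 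Hence $c_D[D,D(a,b)]=0$, i.e. $[D,D(a,b)]=0$, for every $D$ occurring in $x$.

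It then remains to show that, among labeled chord diagrams $D$ of $m$ chords that are nonzero in $\mathcal{C}_m$, one has $[D,D(a,b)]=0$ precisely when $D=\pm\Omega_m$; combined with the previous step this forces $x\in\mathbb{Q}\Omega_m$ and completes the proof. If $D=\pm\Omega_m$ then $[D,D(a,b)]=0$ by Lemma \ref{cancel}. Conversely, I would write $[D,D(a,b)]=A+B$ with $A=\sum_p\mathcal{D}(D,p,D(a,b),\delta)$ and $B=\sum_p\mathcal{D}(D,p,D(a,b),\overline{\delta})$ as in (\ref{[D,D(a,b)]}). Grouping the $2m$ vertices of $D$ into its cyclic classes and using that the distinct cyclic rotations of a cut are linearly independent in $\mathcal{LC}_m$ (together with the fact, read off from the proof of Lemma \ref{odd}, that evenness of the index makes the relevant rotation fix the cut with a plus sign), $A$, and likewise $B$, is a nonzero combination of pairwise non-proportional diagrams, all of maximal index by Lemma \ref{hodai1}. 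By Lemma \ref{hodai2}, a summand of $A$ can equal, up to sign, a summand of $B$ only if $D=\pm\Omega_m$, which is excluded, or $D=\pm D(c,d)$ for some $c,d\ge 1$, and then only for the single pair of summands indexed by the vertices corresponding to $\delta$ and $\overline{\delta}$. In either remaining case at least one summand of $A+B$ survives and cannot cancel, so $[D,D(a,b)]\ne 0$.

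The hard part will be this last step. Lemmas \ref{hodai1} and \ref{hodai2} carry the combinatorial weight, but they rest on the maximal-index bookkeeping that is special to the diagrams $D(m,2m+1)$, and assembling them still requires the careful treatment of diagrams of less than maximal index, where one must confirm that $A$ and $B$ are genuinely nonzero; this is exactly where the evenness of the index (Lemma \ref{odd}) and the linear independence of the cyclic rotations of a cut are used.
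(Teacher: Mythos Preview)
Your proposal is correct and follows the same approach as the paper's proof: reduce to a single degree $m$, bracket against the test element $D(a,b)$ with $(a,b)=(m,2m+1)$, use Corollary~\ref{kekkyoku} to separate the contributions of different $D$'s, and then verify that $[D,D(a,b)]\ne 0$ for $D\ne\pm\Omega_m$ via Lemmas~\ref{hodai1} and~\ref{hodai2}. The only difference is organizational: the paper packages the disjoint-support argument and the nonvanishing together as a linear-independence claim for the family $\{[D,D(a,b)]\}_{D\in\Delta}$, whereas you first deduce $[D,D(a,b)]=0$ for each $D$ in $x$ and then characterize when this bracket vanishes.

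One small imprecision: in the case $D=\pm D(c,d)$ you speak of a ``single pair'' of summands in $A+B$ that can cancel, but in fact there are two such pairs. Since $D(c,d)=-D(d,c)$, Lemma~\ref{hodai2} applied with $(c,d)$ and with $(d,c)$ yields both $\mathcal{D}(D,\delta,D(a,b),\delta)=-\mathcal{D}(D,\overline{\delta},D(a,b),\overline{\delta})$ and $\mathcal{D}(D,\overline{\delta},D(a,b),\delta)=-\mathcal{D}(D,\delta,D(a,b),\overline{\delta})$; the paper records both cancellations explicitly. This does not affect your conclusion, because the remaining $2(2m-2)$ terms (with $m\ge 3$ whenever $D(c,d)$ exists) are still pairwise non-proportional and hence survive.
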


\begin{proof}
Since the Lie algebra $\mathcal{C}$ is graded, it suffices to
show that any homogeneous element of degree $m$ which lies in the
center $Z(\mathcal{C})$ is actually a multiple of $\Omega_m$.
Suppose $X\in \mathcal{C}_m \cap Z(\mathcal{C})$ and write
$X$ as
\begin{equation}
\label{repX}
X=x \Omega_m+\sum_{(c,d)} x_{(c,d)}D(c,d)+\sum_i x_iD_i,\ x,x_{(c,d)},x_i \in \mathbb{Q}
\end{equation}
where the second term is a sum taken over $\{ (c,d);1\le c<d, m=c+d+1\}$, and
the third term is a sum taken over labeled chord diagrams $D_i$ not equal to
$\pm \Omega_m$ and $\pm D(c,d)$. We may assume the index of any $D_i$
is even, and $D_i\neq \pm D_j$ if $i\neq j$.

As in Lemmas \ref{hodai1} and \ref{hodai2}, let $a=m$ and $b=2m+1$.
Then we have
$$0=[X,D(a,b)]=\sum_{(c,d)}x_{(c,d)}[D(c,d),D(a,b)]
+\sum_i x_i [D_i,D(a,b)].$$

We claim that the elements $[D(c,d),D(a,b)]$ and
$[D_i,D(a,b)]$ are linearly independent in $\mathcal{C}$.
Assuming this claim, we have $x_{(c,d)}=x_i=0$ for all $(c,d)$ and $i$.
Thus $X=x\Omega_m$, and this will complete the proof.

Now we prove the claim. First we look at $[D(c,d),D(a,b)]$.
For simplicity we denote $\mathcal{D}(D(c,d),\delta,D(a,b),\delta)
=\mathcal{D}(\delta,\delta)$, etc. We have
$\mathcal{D}(\delta,\delta)=D(b+c,a+d)=-D(a+d,b+c)
=-\mathcal{D}(\overline{\delta},\overline{\delta})\in \mathcal{C}$,
and similarly we have $\mathcal{D}(\delta,\overline{\delta})=
-\mathcal{D}(\overline{\delta},\delta)$. Combining this with
(\ref{[D,D(a,b)]}), we have
$$[D(c,d),D(a,b)]=
\sum_{p\neq \delta,\overline{\delta}}\mathcal{D}(D(c,d),p,D(a,b),\delta)
+\sum_{p\neq \delta,\overline{\delta}}
\mathcal{D}(D(c,d),p,D(a,b),\overline{\delta}).$$
By Lemmas \ref{hodai1} and \ref{hodai2} and the fact that
$D(c,d)$ is of maximal index, the $2(2m-2)$ diagrams
appearing in this sum are distinct to each other,
even if we forget the labels of chords.
Therefore $[D(c,d),D(a,b)]$ is expressed as the sum of
$2(2m-2)$ distinct labeled chord diagrams which are linearly
independent in $\mathcal{C}$.

Next we look at $[D_i,D(a,b)]$.
We denote by $\iota=\iota(D_i)$ the index of $D_i$.
We have
$$[D_i,D(a,b)]=\sum_p \mathcal{D}(D_i,p,D(a,b),\delta)+
\sum_p \mathcal{D}(D_i,p,D(a,b),\overline{\delta}).$$
Again by Lemmas \ref{hodai1} and \ref{hodai2} this sum
equals $2m/\iota$ times the sum of $2\iota(D_i)$
distinct labeled chord diagrams linearly independent in
$\mathcal{C}$.

Set $\Delta=\{ D(c,d) \}_{(c,d)}\cup \{ D_i \}_i$ and
for each $D\in \Delta$, let $T_D \subset \mathcal{C}$ be the set of the
diagrams appearing in $[D,D(a,b)]$ described as above.
What we have observed is that $[D,D(a,b)]$ is a non-zero multiple
of $\sum_{\mathcal{D}\in T_D}\mathcal{D}$. Moreover,
by Corollary \ref{kekkyoku}, if $D,D^{\prime}\in \Delta$,
$D\neq D^{\prime}$, then $T_D\cap (\pm T_{D^{\prime}})=\emptyset$.
This shows $[D,D(a,b)]$, $D\in \Delta$ are linearly independent
and proves the claim.
\end{proof}

This proof also shows that if $X\in \mathcal{C}_m$ satisfies
$[X,\mathcal{C}_{3m+2}]=0$, then $X$ is in the center of $\mathcal{C}$.
The following theorem could be a supporting evidence for
Conjecture \ref{conj-Sigma}.

\begin{thm}\label{stabilized}
Denote $m(g) := \left[\frac{\displaystyle g-1}{\displaystyle 4}\right]+1$ 
for $g \ge 1$. Then we have 
$$
Z(\agminus)+N(\T_{2m(g)}) = \bigoplus^\infty_{m=2}\mathbb{Q}N(\omega^m) 
+ N(\T_{2m(g)}) \subset N(\T_1) \subset \agminus.
$$
\end{thm}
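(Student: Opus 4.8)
The plan is to prove the two inclusions separately, transporting the problem to the Lie algebra $\mathcal{C}$ of oriented chord diagrams through the homomorphisms $a\colon\mathcal{C}_n\to(\ag)_{(2n)}^{\fsp}$, which are isomorphisms for $n\le g$ (Lemma \ref{a-isom}), and exploiting Theorem \ref{Z(C)} together with the refinement noted just after it: if $X\in\mathcal{C}_m$ satisfies $[X,\mathcal{C}_{3m+2}]=0$, then $X\in Z(\mathcal{C})$. Throughout I will use the identity $a(\Omega_n)=a(N(I_n))=N(a(I_n))=N(\omega^n)$ and the degree estimates $[(\ag)_{(2n)},(\ag)_{(2\ell)}]\subseteq(\ag)_{(2n+2\ell-2)}$ and $[\mathcal{C}_n,\mathcal{C}_{\ell}]\subseteq\mathcal{C}_{n+\ell-1}$.

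For the inclusion $\subseteq$, take $z\in Z(\agminus)$. By (\ref{incl-ag}) we have $z\in Z((\ag)^{\fsp})$; write $z=\sum_{n\ge 2}z_{2n}$ with $z_{2n}\in(\ag)_{(2n)}^{\fsp}$ (odd tensor degrees carry no $\fsp$-invariants, and the degree-$2$ piece is $Z(\fsp)=0$). Since the bracket preserves the grading, a comparison of degrees shows each $z_{2n}$ is separately central in $(\ag)^{\fsp}$. Now fix $n$ with $2\le n\le m(g)-1=[(g-1)/4]$, so that $4n+1\le g$. Choose $X\in\mathcal{C}_n$ with $a(X)=z_{2n}$, which is possible since $a$ is surjective (Lemma \ref{a-isom}(1)). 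For any $Y\in\mathcal{C}_{3n+2}$ we have $[X,Y]\in\mathcal{C}_{4n+1}$ and $a([X,Y])=[a(X),a(Y)]=[z_{2n},a(Y)]=0$; since $4n+1\le g$, the map $a$ is injective on $\mathcal{C}_{4n+1}$ (Lemma \ref{a-isom}(2)), whence $[X,Y]=0$. Thus $[X,\mathcal{C}_{3n+2}]=0$, so by the refinement of Theorem \ref{Z(C)} we get $X\in Z(\mathcal{C})=\prod_{m\ge 2}\mathbb{Q}\Omega_m$; being homogeneous, $X\in\mathbb{Q}\Omega_n$, and therefore $z_{2n}=a(X)\in\mathbb{Q}N(\omega^n)$. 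Since $z_{2n}\in N(H^{\otimes 2n})\subseteq N(\T_{2m(g)})$ for $n\ge m(g)$, splitting $z=\sum_{2\le n< m(g)}z_{2n}+\sum_{n\ge m(g)}z_{2n}$ yields $z\in\bigoplus^\infty_{m=2}\mathbb{Q}N(\omega^m)+N(\T_{2m(g)})$.

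For the inclusion $\supseteq$, observe that $N(\omega^m)\in N(\T_{2m})\subseteq N(\T_{2m(g)})$ for $m\ge m(g)$, so it is enough to show $N(\omega^m)\in Z(\agminus)+N(\T_{2m(g)})$ for $2\le m\le m(g)-1$. The boundary power $\zeta^j$ represents a central element of $\Qpi(\Sigma_{g,1})$ (Theorem \ref{Goldman}), so by (\ref{incl-Z}) its image $N\theta(\zeta^j)=N(\exp(j\omega))=\sum_{k\ge 2}(j^k/k!)N(\omega^k)$ lies in $Z(\agminus)$ for every $j\ge 1$. Reducing modulo $N(\T_{2m(g)})$, the elements $N\theta(\zeta^j)$ for $1\le j\le m(g)-2$ are $\mathbb{Q}$-linear combinations of $N(\omega^2),\dots,N(\omega^{m(g)-1})$ with coefficient matrix $(j^k/k!)$, which is invertible (a Vandermonde-type determinant after scaling rows by $1/j^2$ and columns by $k!$); hence each $N(\omega^k)$ with $2\le k\le m(g)-1$ lies in $Z(\agminus)+N(\T_{2m(g)})$, completing this inclusion.

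The substantive step is $\subseteq$, and the delicate point there is that the centrality criterion of Theorem \ref{Z(C)} forces one to test $X\in\mathcal{C}_n$ against $\mathcal{C}_{3n+2}$, whose bracket with $\mathcal{C}_n$ lands in degree $4n+1$; one therefore needs the stable isomorphism $a$ in degree $4n+1$, not merely in degree $n$. This is exactly what produces the bound $m(g)=[(g-1)/4]+1$, and the one arithmetic check that must be made carefully is that $n\le m(g)-1$ indeed forces $4n+1\le g$ (and a fortiori $n\le g$), so that Lemma \ref{a-isom} applies where it is used. The identity $a(\Omega_n)=N(\omega^n)$, the continuity needed to pass from homogeneous to arbitrary test elements in the centrality argument, and the Vandermonde invertibility in the second part are all routine.
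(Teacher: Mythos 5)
Your proposal is correct, and for the substantive inclusion it is essentially the paper's own proof: reduce to a homogeneous central element of tensor degree $<2m(g)$ (the paper works with homogeneous elements from the start; your degree-separation argument justifying this is the routine point), pull it back through the stable map $a$ of Lemma \ref{a-isom}, test against $\mathcal{C}_{3m+2}$ so that the bracket lands in $\mathcal{C}_{4m+1}$ where $a$ is still injective because $4m+1\le g$ when $m<m(g)$, and invoke the remark following Theorem \ref{Z(C)} to conclude $X\in\mathbb{Q}\Omega_m$, hence $z_{2m}\in\mathbb{Q}N(\omega^m)$. Where you genuinely diverge is the reverse inclusion: the paper dismisses it in one line (``clear since the map $a$ is surjective''), while you prove it by noting that $N\theta(\zeta^j)=\sum_{k\ge2}(j^k/k!)N(\omega^k)$ lies in $Z(\agminus)$ by (\ref{incl-Z}) and then inverting a Vandermonde-type coefficient matrix modulo $N(\T_{2m(g)})$ --- in effect running the mechanism of Corollary \ref{approx} backwards, with no circularity since you only use Theorem \ref{completion} and (\ref{incl-Z}). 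Your route imports the Goldman-side input but is a complete and careful argument; note that the same idea gives slightly more for free: since the graded pieces of $[N\theta(\zeta^j),v]$ lie in distinct tensor degrees for homogeneous $v$, each $N(\omega^m)$ is actually central in $\agminus$ on the nose, which makes the inclusion $\supseteq$ hold without any appeal to $N(\T_{2m(g)})$ and is presumably the fact the paper's terse remark is gesturing at.
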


\begin{proof}
Let $u\in Z(\agminus)$ be a homogeneous element of degree
$< 2m(g)$. From (\ref{incl-ag}), we have
$u\in Z((\mathfrak{a}_g)^{\mathfrak{sp}})$.
By Lemma \ref{a-isom} (2), there uniquely exists $X\in \mathcal{C}_m$,
where $m<m(g)$, such that $a(X)=u$.
Since $u$ is in the center, $a([X,\mathcal{C}_{3m+2}])=[u,a(\mathcal{C}_{3m+2})]=0$.
On the other hand, $[X,\mathcal{C}_{3m+2}]\in \mathcal{C}_{4m+1}$
and $4m+1 \le g$ since $m<m(g)$. By Lemma \ref{a-isom} (2) and the remark
after the proof of Theorem \ref{Z(C)}, we see that $X$ is in the
center of $\mathcal{C}$. Hence $u=a(X)$ is a multiple of $a(\Omega_m)=N(\omega^m)$.

The other inclusion is clear since the map $a$ is surjective.
\end{proof}

As a corollary, we obtain
\begin{cor}\label{approx}
For any $u \in Z(\Qpi(\Sigma_{g,1}))$, there exists a polynomial 
$f(\zeta) \in \mathbb{Q}[\zeta] \subset \mathbb{Q}\pi$ such that
$$
u \equiv \vert f(\zeta)\vert \pmod{\Qpi(2m(g))}.
$$
\end{cor}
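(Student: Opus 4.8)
The plan is to combine the inclusion (\ref{incl-Z}) with Theorem \ref{stabilized} and then to realize the surviving symplectic monomials $N(\omega^m)$ as Magnus images of polynomials in $\zeta$. Fix a symplectic expansion $\theta$ of $\pi$; by Lemma \ref{characterization} the filtration $\{\Qpi(p)\}_p$ is independent of this choice, so it suffices to find $f(\zeta)\in\mathbb{Q}[\zeta]$ with $N\theta(w-f(\zeta))\in N(\T_{2m(g)})$, where $w\in\mathbb{Q}\pi$ is any element with $|w|=u$. By (\ref{incl-Z}) we have $N\theta(w)\in Z(\agminus)$, so Theorem \ref{stabilized} gives
\[
N\theta(w)\in\bigoplus_{m=2}^{\infty}\mathbb{Q}\,N(\omega^m)+N(\T_{2m(g)}).
\]
Since $\omega^m\in H^{\otimes 2m}\subset\T_{2m(g)}$ for $m\ge m(g)$, every term $N(\omega^m)$ with $m\ge m(g)$ already lies in the second summand, so there are rationals $c_2,\dots,c_{m(g)-1}$ with $N\theta(w)\equiv\sum_{m=2}^{m(g)-1}c_m N(\omega^m)\pmod{N(\T_{2m(g)})}$. (If $m(g)\le 2$ this sum is empty and one simply takes $f=0$; assume $m(g)\ge 3$ from now on.)

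The second step is to compute $N\theta(\zeta^n)$ for $n\ge 1$. Multiplicativity of $\theta$ and the identity $\theta(\zeta)=\exp(\omega)$ give $\theta(\zeta^n)=\exp(n\omega)=\sum_{k\ge 0}\frac{n^k}{k!}\omega^k$, hence
\[
N\theta(\zeta^n)=\sum_{k\ge 2}\frac{n^k}{k!}\,N(\omega^k),
\]
the terms $k=0,1$ dropping out since $N$ annihilates $H^{\otimes 0}$ and $N(\omega)=\sum_i\bigl(N(A_iB_i)-N(B_iA_i)\bigr)=0$. Therefore, for $f(\zeta)=\sum_{n\ge 1}a_n\zeta^n$,
\[
N\theta(f(\zeta))\equiv\sum_{k=2}^{m(g)-1}\Bigl(\sum_n a_n\frac{n^k}{k!}\Bigr)N(\omega^k)\pmod{N(\T_{2m(g)})},
\]
the tail $k\ge m(g)$ again lying in $N(\T_{2m(g)})$. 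So it is enough to choose the $a_n$ with $\sum_n a_n n^k/k!=c_k$ for $k=2,\dots,m(g)-1$.

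Finally I would solve this linear system, which is the only place a computation is needed. Restrict the support to $n=1,\dots,m(g)-2$, so that there are as many unknowns $a_n$ as equations; the coefficient matrix, after multiplying the $k$-th row by $k!$ and pulling $n^2$ out of the $n$-th column, becomes the Vandermonde matrix on the distinct nodes $1,\dots,m(g)-2$, hence is invertible over $\mathbb{Q}$ and the system has a solution. For the resulting $f(\zeta)$ one gets $N\theta(w-f(\zeta))\in N(\T_{2m(g)})$, that is, $u\equiv|f(\zeta)|\pmod{\Qpi(2m(g))}$. The substantive input is Theorem \ref{stabilized}; the remaining difficulty is only the bookkeeping of which $N(\omega^m)$ survive modulo $N(\T_{2m(g)})$ together with the elementary Vandermonde inversion.
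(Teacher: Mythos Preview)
Your proof is correct and follows essentially the same route as the paper: apply (\ref{incl-Z}) and Theorem \ref{stabilized} to reduce to matching finitely many coefficients $N(\omega^m)$, use the symplecticity $\theta(\zeta^n)=\exp(n\omega)$ to express $N\theta(\zeta^n)$ in those terms, and invert a Vandermonde system. The only cosmetic difference is that you work directly with $N(\omega^k)$ (using $N(\omega)=0$) and set up a minimal $(m(g)-2)\times(m(g)-2)$ system, whereas the paper matches $\theta(f(\zeta))\equiv h(\omega)\pmod{\T_{2m(g)}}$ before applying $N$ and writes down a larger $(2m(g)-1)\times(2m(g)-1)$ Vandermonde; neither change affects the argument.
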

\begin{proof} We have $N\theta(u) \in Z(\agminus)$ by (\ref{incl-Z}). 
From Theorem \ref{stabilized} there exists a polynomial $h(\omega) 
\in \mathbb{Q}[\omega]$ such that $N\theta(u) \equiv Nh(\omega)
\pmod{N(\T_{2m(g)})}$. Since $\theta$ is symplectic, we have 
$\theta(\zeta^n) = \sum^\infty_{k=0}(1/k!)n^k\omega^k$. 
From Vandermonde's determinant
$$
\det\left(\frac{1}{k!}j^k\right)_{1\leq j,k\leq 2m(g)-1} = 
\left(\prod^{2m(g)-1}_{k=1}k!\right)^{-1} \prod_{j_i<j_2}(j_2-j_1) \neq 0,
$$
there exists a polynomial $f(\zeta) \in \mathbb{Q}[\zeta]$ such that 
$\theta(f(\zeta)) \equiv h(\omega) \pmod{\T_{2m(g)}}$. 
Hence we have $N\theta(u) \equiv N\theta(f(\zeta))
\pmod{N(\T_{2m(g)})}$, and so 
$u \equiv \vert f(\zeta)\vert \pmod{\Qpi(2m(g))}$, as was to be shown.
\end{proof}

\section{Surface of infinite genus}\label{s:infinite}

In this section we prove Theorem \ref{main}.

\subsection{Inductive system of surfaces}\label{s:inductive}
As in the Introduction, we consider the embedding
$$
i^{g}_{g+1}: \Sigma_{g,1}\to \Sigma_{g+1,1}
$$
given by gluing the surface $\Sigma_{1,2}$ to the surface $\Sigma_{g,1}$ 
along the boundary. These embeddings constitute an inductive system
of oriented surfaces $\{\Sigma_{g,1}, i^h_g\}_{h \leq g}$. Here 
$i^h_g: \Sigma_{h,1}\to\Sigma_{g,1}$ is the composite of 
the embeddings $i^{h}_{h+1}, i^{h+1}_{h+2},\cdots, i^{g-1}_g$. 
Choose a basepoint $*_g$ on the boundary $\partial\Sigma_{g,1}$.
For the rest of the paper, we often write simply
$$
\pi^{(g)} = \pi_1(\Sigma_{g,1}, *_g), \quad
\hat{\pi}^{(g)} = \hat{\pi}(\Sigma_{g,1}), \quad
H^{(g)} = H_1(\Sigma_{g,1}; \mathbb{Q}) \quad\mbox{and}\quad
\T^{(g)} = \prod^\infty_{m=1}(H^{(g)})^{\otimes m}.
$$
\begin{lem}\label{inj-GLA}
The inclusion map $i^h_g$ induces an injective map of homotopy sets
$i^h_g: \hat{\pi}^{(h)} \to \hat{\pi}^{(g)}$. In particular, the map
$$
i^h_g: \Qpi^{(h)} \to \Qpi^{(g)}
$$ 
on the Goldman Lie algebras is an injective homomorphism 
of Lie algebras.
\end{lem}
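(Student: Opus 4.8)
The plan is to produce, for each pair $h\le g$, a topological retraction $r^h_g\colon\Sigma_{g,1}\to\Sigma_{h,1}$ splitting $i^h_g$, and then to read off all the assertions from the mere existence of such a retraction.

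First I would identify the complement. Removing the interior of $i^h_g(\Sigma_{h,1})$ from $\Sigma_{g,1}$ leaves a compact oriented surface which, since $\Sigma_{g,1}$ is obtained from $\Sigma_{h,1}$ by successively gluing on $g-h$ copies of $\Sigma_{1,2}$, is the surface $\Sigma_{g-h,2}$ of genus $g-h$ with two boundary circles; one of those circles, call it $d$, is the curve along which this complement is glued back to $\Sigma_{h,1}$. Now $\pi_1(\Sigma_{g-h,2})$ is a free group, and writing down the usual one-relator presentation of a genus-$k$ surface with two boundary components and solving the relation for the second boundary curve shows that $d$ is a member of a free basis of $\pi_1(\Sigma_{g-h,2})$. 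Hence the homomorphism $\pi_1(\Sigma_{g-h,2})\to\langle d\rangle\cong\mathbb{Z}$ sending $d$ to $d$ and the remaining basis elements to $1$ is an algebraic retraction; as every space in sight is aspherical this homomorphism is induced by a continuous map $\Sigma_{g-h,2}\to d$ which, after a homotopy, restricts to the identity on $d$. Gluing this map with the identity on $i^h_g(\Sigma_{h,1})$ yields the desired retraction $r^h_g\colon\Sigma_{g,1}\to\Sigma_{h,1}$ with $r^h_g\circ i^h_g=\mathrm{id}$. (Equivalently one may build $r^h_g$ as a composite of one-step retractions $\Sigma_{g+1,1}\to\Sigma_{g,1}$, using only that one boundary curve of $\Sigma_{1,2}$ is part of a free basis of $\pi_1(\Sigma_{1,2})\cong F_3$.)

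Next I would deduce the two injectivity statements. Since $(r^h_g)_*\circ(i^h_g)_*=\mathrm{id}$ on $\pi^{(h)}$, the homomorphism $(i^h_g)_*\colon\pi^{(h)}\to\pi^{(g)}$ is a split injection. For the map on free homotopy sets, recall that $\hat\pi^{(g)}$ is the set of conjugacy classes in $\pi^{(g)}$. If $x,y\in\pi^{(h)}$ become conjugate in $\pi^{(g)}$, say $(i^h_g)_*(x)=w\,(i^h_g)_*(y)\,w^{-1}$, then applying $(r^h_g)_*$ gives $x=(r^h_g)_*(w)\,y\,(r^h_g)_*(w)^{-1}$ with $(r^h_g)_*(w)\in\pi^{(h)}$, so $x$ and $y$ were already conjugate in $\pi^{(h)}$; hence $i^h_g\colon\hat\pi^{(h)}\to\hat\pi^{(g)}$ is injective.

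Finally, the Lie algebra assertion is formal once this is known. For immersed loops $\alpha,\beta$ supported in $i^h_g(\Sigma_{h,1})$ one computes the Goldman bracket using those same representatives inside $\Sigma_{g,1}$: the set of transverse double points and the local intersection signs are unchanged because the embedding is orientation preserving, and each class $|\alpha_p\beta_p|$ is carried forward by $i^h_g$. Thus $i^h_g\colon\Qpi^{(h)}\to\Qpi^{(g)}$ is a homomorphism of Lie algebras, and being the $\mathbb{Q}$-linear extension of the injection of spanning sets $\hat\pi^{(h)}\hookrightarrow\hat\pi^{(g)}$ just established, it is injective. The only step where something genuinely has to be checked is the primitivity of the gluing curve $d$ in the complementary subsurface — that is precisely what forces the retraction to exist — and this is a short computation with a surface-group presentation; everything else is bookkeeping.
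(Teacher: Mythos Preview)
Your proof is correct and shares the same key idea as the paper's: construct a left inverse $r^g_h$ to $(i^h_g)_*$ at the level of fundamental groups, and use it to show that if two elements of $\pi^{(h)}$ become conjugate in $\pi^{(g)}$ then they were already conjugate in $\pi^{(h)}$. The final deduction for the Goldman Lie algebra is identical in both.

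The difference is in how the retraction is built. You work topologically: you identify the complement as $\Sigma_{g-h,2}$, argue that the gluing curve $d$ is primitive in its free fundamental group, invoke asphericity to realize the algebraic retraction $\pi_1(\Sigma_{g-h,2})\to\langle d\rangle$ by a continuous map, homotope it to be the identity on $d$, and glue. The paper does something more economical and purely algebraic: it simply extends a free generating system $\{x_1,\dots,x_{2h}\}$ of $\pi^{(h)}$ (conjugated by a path $\ell$ to the basepoint $*_g$) to a free generating system $\{\ell x_1\ell^{-1},\dots,\ell x_{2h}\ell^{-1},x_{2h+1},\dots,x_{2g}\}$ of $\pi^{(g)}$, and defines $r^g_h$ by sending the extra generators to $1$. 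This avoids any discussion of the complement, primitivity of $d$, asphericity, or homotoping maps to match on the boundary --- none of which is needed, since only the group-level retraction is ever used. Your topological retraction is a stronger statement than the lemma requires.
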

\begin{proof}
Choose a simple path $\ell: [0,1] \to \Sigma_{g,1} \setminus 
\overset{\circ}{\Sigma_{h,1}}$ connecting the basepoint $*_g$ to $*_h$. 
Here we denote by $\overset{\circ}{\Sigma_{h,1}}$ the interior 
of the surface $\Sigma_{h,1}$. 
The map $i^g_h: \pi^{(h)} \to \pi^{(g)}$ given by $x \mapsto \ell x \ell^{-1}$ 
is an injective homomorphism which induces the map $i^h_g: \hat{\pi}^{(h)} 
\to \hat{\pi}^{(g)}$. There exists a group homomorphism
$r^g_h: \pi^{(g)} \to \pi^{(h)}$ satisfying $r^g_h\circ i^h_g = 1_{\pi^{(h)}}$. 
In fact, if $\{x_1, \dots, x_{2h}\} \subset \pi^{(h)}$ is a free generating system 
of  $\pi^{(h)}$, we may choose $x_i \in \pi^{(g)}$ for $2h+1 \leq i \leq 2g$ 
such that $\{\ell x_1\ell^{-1}, \dots, \ell x_{2h}\ell^{-1}, 
x_{2h+1}, \dots, x_{2g}\}$ is a free generating system of $\pi^{(g)}$. 
If we define $r^g_h$ by 
$r^g_h(\ell x_i\ell^{-1}) = x_i$ for $1 \leq i \leq 2h$, and 
$r^g_h(x_j) = 1$ for $2h+1\leq j\leq 2g$, then we have  
$r^g_h\circ i^h_g = 1_{\pi^{(h)}}$. \par
Let  $x$ and $y$ be elements in $\pi^{(h)}$. 
Suppose $i^h_g(x)$ is conjugate to $i^h_g(y)$. Then there exists an 
element $z \in \pi^{(g)}$ such that $i^h_g(y) = z i^h_g(x)z^{-1}$. 
Applying the homomorphism $r^g_h$, we obtain 
$y=r^g_h(z) x r^g_h(z)^{-1}$. Hence $x$ is conjugate to $y$. 
This proves the first half of the lemma. \par
From the first half, the map $i^h_g: \Qpi^{(h)} \to \Qpi^{(g)}$ is 
injective. It is a homomorphism of Lie algebras by the definition of 
the Goldman bracket.
\end{proof}

Recall from \S2.3 the decreasing filtration $\mathbb{Q}\hat{\pi}(p)$.

\begin{lem}\label{filter} For any $p \geq 1$ and $h\leq g$, we have 
$$
\Qpi^{(h)}(p) = (i^h_g)^{-1}(\Qpi^{(g)}(p))
$$
\end{lem}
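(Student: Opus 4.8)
The plan is to reduce the statement to the group-ring description $\Qpi(p) = \vert\mathbb{Q}1+(I\pi)^p\vert$ recorded just after Lemma \ref{characterization}, combined with the group homomorphisms $i^h_g\colon\pi^{(h)}\to\pi^{(g)}$ and $r^g_h\colon\pi^{(g)}\to\pi^{(h)}$ with $r^g_h\circ i^h_g=1_{\pi^{(h)}}$ produced inside the proof of Lemma \ref{inj-GLA}. First I would observe the elementary functoriality: any group homomorphism $\varphi\colon\pi_1\to\pi_2$ induces a ring homomorphism $\mathbb{Q}\pi_1\to\mathbb{Q}\pi_2$ commuting with the augmentation, hence carrying $\mathbb{Q}1+(I\pi_1)^p$ into $\mathbb{Q}1+(I\pi_2)^p$, and $\varphi$ also descends to conjugacy classes, yielding a $\mathbb{Q}$-linear map $\Qpi_1\to\Qpi_2$ that fits into a commutative square with the two projections $\vert\ \vert$. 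Applying this to $i^h_g$ and to $r^g_h$ gives linear maps on the Goldman Lie algebras whose composite $r^g_h\circ i^h_g$ is the identity on $\Qpi^{(h)}$.

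The inclusion $\Qpi^{(h)}(p)\subseteq (i^h_g)^{-1}(\Qpi^{(g)}(p))$ is then immediate: writing $v=\vert u\vert$ with $u\in\mathbb{Q}1+(I\pi^{(h)})^p$, the commutative square gives $i^h_g(v)=\vert i^h_g(u)\vert$ with $i^h_g(u)\in\mathbb{Q}1+(I\pi^{(g)})^p$, so $i^h_g(v)\in\Qpi^{(g)}(p)$. For the reverse inclusion, given $v\in\Qpi^{(h)}$ with $i^h_g(v)\in\Qpi^{(g)}(p)=\vert\mathbb{Q}1+(I\pi^{(g)})^p\vert$, I would apply the linear map $r^g_h$: since $r^g_h$ sends $\vert\mathbb{Q}1+(I\pi^{(g)})^p\vert$ into $\vert\mathbb{Q}1+(I\pi^{(h)})^p\vert$ and $r^g_h\circ i^h_g$ is the identity, one gets $v=r^g_h(i^h_g(v))\in\vert\mathbb{Q}1+(I\pi^{(h)})^p\vert=\Qpi^{(h)}(p)$, which finishes the proof.

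I do not expect a genuine obstacle here; the one point that needs care is that $r^g_h$ is \emph{not} induced by an embedding of surfaces and need not respect the Goldman bracket, so it must be used only as a $\mathbb{Q}$-linear retraction, at the level of the underlying vector spaces and of the group-ring filtration $\{\vert\mathbb{Q}1+(I\pi)^p\vert\}$ — which is exactly what the argument requires. (One could instead try to argue through Lemma \ref{characterization} and the filtered algebra automorphism $U$ relating two Magnus expansions, but matching up the two tensor algebras $\T^{(h)}$ and $\T^{(g)}$ makes that route less transparent than the group-ring one, so I would not pursue it.)
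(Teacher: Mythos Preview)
Your proof is correct. Both you and the paper rely on the retraction $r^g_h$ from Lemma~\ref{inj-GLA}, but you deploy it at different levels: the paper extends a Magnus expansion from $\pi^{(h)}$ to $\pi^{(g)}$, then uses $r^g_h$ on the tensor-algebra side to obtain $(i^h_g)^{-1}(\T^{(g)}_p)=\T^{(h)}_p$ and $(i^h_g)^{-1}([\T^{(g)},\T^{(g)}])=[\T^{(h)},\T^{(h)}]$, and concludes via Lemma~\ref{characterization}; you instead invoke the group-ring description $\Qpi(p)=\vert\mathbb{Q}1+(I\pi)^p\vert$ and apply $r^g_h$ directly to the augmentation-ideal filtration and conjugacy classes. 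Your route is more elementary --- it never touches Magnus expansions or the tensor algebras $\T^{(h)},\T^{(g)}$ --- while the paper's route stays inside the $\T$-framework already set up in \S\ref{s:sympl}. One small correction to your closing parenthetical: the paper's proof does \emph{not} go through the automorphism $U$ of Lemma~\ref{characterization}; it uses exactly the same retraction $r^g_h$ as you do, only transported to the tensor-algebra side.
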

\begin{proof} Choose a Magnus expansion $\theta': \pi^{(h)} \to 
\T^{(h)}$ and extend it to a Magnus expansion $\theta'': 
\pi^{(g)} \to \T^{(g)}$. We have a commutative diagram
$$
\begin{CD}
\mathbb{Q}\pi^{(h)} @>{\theta'}>> \T^{(h)}\\
@V{i^h_g}VV @V{i^h_g}VV\\
\mathbb{Q}\pi^{(g)} @>{\theta''}>> \T^{(g)}.
\end{CD}
$$
Here the right $i^h_g$ is induced by the inclusion homomorphism
${i^h_g}_*: H^{(h)} = H_1(\Sigma_{h,1};\mathbb{Q}) \to 
H^{(g)} = H_1(\Sigma_{g,1};\mathbb{Q})$. 
Using the map $r^g_h$ introduced in the proof of Lemma \ref{inj-GLA}, 
we obtain $(i^h_g)^{-1}(\T^{(g)}_p) = \T^{(h)}_p$ and 
$(i^h_g)^{-1}([\T^{(g)}, \T^{(g)}]) = [\T^{(h)}, \T^{(h)}]$. 
Hence, for $u \in \Qpi^{(h)}$, the condition 
$\theta''(i^h_g(u)) - \varepsilon(u) \in \T^{(g)}_p+[\T^{(g)}, \T^{(g)}]$ 
is equivalent to 
$\theta'(u) - \varepsilon(u) \in \T^{(h)}_p+[\T^{(h)}, \T^{(h)}]$. 
From Lemma \ref{characterization}, these conditions are equivalent to 
$i^h_g\vert u\vert \in \Qpi^{(g)}(p)$ and $\vert u\vert \in \Qpi^{(h)}(p)$, 
respectively. This proves the lemma.
\end{proof}

We denote by $\Sigma_{\infty,1}$ the inductive limit 
of the system $\{\Sigma_{g,1}, i^h_g\}_{h \leq g}$
$$
\Sigma_{\infty,1} := \varinjlim_{g\to\infty}\Sigma_{g,1}.
$$
This is an oriented connected paracompact surface. 
We regard $\Sigma_{g,1}$ as a subsurface of $\Sigma_{\infty,1}$ and 
denote the inclusion map by $i^g_\infty: \Sigma_{g,1}\to 
\Sigma_{\infty,1}$. 
For any compact subset $K \subset \Sigma_{\infty,1}$, 
there exists a sufficiently large $g$ such that $K \subset \Sigma_{g,1}$. 
In particular, the Goldman Lie algebra $\Qpi(\Sigma_{\infty,1})$ is 
exactly the inductive limit of the Lie algebras $\Qpi(\Sigma_{g,1})$'s
\begin{equation}
\Qpi(\Sigma_{\infty,1}) = \varinjlim_{g\to\infty}\Qpi(\Sigma_{g,1}).
\label{limit}
\end{equation}
From Lemma \ref{inj-GLA}, the inclusion homomorphism
\begin{equation}
i^g_\infty: \Qpi(\Sigma_{g,1}) \to \Qpi(\Sigma_{\infty,1})
\label{inj-infty}
\end{equation}
is injective. \par

\subsection{Proof of Theorem \ref{main}}\label{s:proof}

In this subsection we prove Theorem \ref{main}. 
It is clear $\mathbb{Q}1 \subset Z(\Qpi(\Sigma_{\infty,1}))$. 
We assume there exists an element $u \in Z(\Qpi(\Sigma_{\infty,1}))
\setminus \mathbb{Q}1$, and deduce a contradiction. 
By (\ref{limit}), we have $u \in \Qpi(\Sigma_{g_0,1})$ 
for some $g_0 \geq 1$.
From (\ref{intersection}) and the assumption $u \not\in \mathbb{Q}1$, 
there exists some $p \geq 1$ such that $u \not\in \Qpi^{(g_0)}(p)$. 
We choose the minimum $p$ satisfying this property. 
By Lemma \ref{filter}, we have 
\begin{equation}
i^{g_0}_g(u) \not\in \Qpi^{(g)}(p)
\label{notin}
\end{equation}
for any $g \geq g_0$.\par
There exists some $g_1\geq g_0$ 
such that $2m(g) \geq p$ for any $g \geq g_1$. 
Denote $h := g_1$ and $g:= h+1$. 
Choose a non-null homologous based loop 
$\alpha \in \pi^{(g)} = \pi_1(\Sigma_{g,1}, *_g)$ inside 
the subsurface $\Sigma_{1,2} \subset \Sigma_{g,1}$.
We denote the boundary loops of $\Sigma_{h,1}$ and $\Sigma_{g,1}$ 
by $\gamma$ and $\zeta$, respectively. The loops $\gamma$ and 
$\alpha$ are disjoint. See Figure 13.

\begin{figure}
\begin{center}
\caption{$\Sigma_{h,1}$ and $\Sigma_{g,1}$}

\vspace{0.2cm}

\unitlength 0.1in
\begin{picture}( 38.0000, 12.0000)(  1.0000,-16.0000)
%
\special{pn 13}%
\special{ar 3800 1000 100 600  4.7123890 6.2831853}%
\special{ar 3800 1000 100 600  0.0000000 1.5707963}%
%
\special{pn 13}%
\special{ar 3800 1000 100 600  1.5707963 4.7123890}%
%
\special{pn 13}%
\special{ar 2900 1000 200 200  0.0000000 6.2831853}%
%
\special{pn 8}%
\special{ar 2300 1000 100 600  4.7123890 4.8838176}%
\special{ar 2300 1000 100 600  4.9866747 5.1581033}%
\special{ar 2300 1000 100 600  5.2609604 5.4323890}%
\special{ar 2300 1000 100 600  5.5352461 5.7066747}%
\special{ar 2300 1000 100 600  5.8095318 5.9809604}%
\special{ar 2300 1000 100 600  6.0838176 6.2552461}%
\special{ar 2300 1000 100 600  6.3581033 6.5295318}%
\special{ar 2300 1000 100 600  6.6323890 6.8038176}%
\special{ar 2300 1000 100 600  6.9066747 7.0781033}%
\special{ar 2300 1000 100 600  7.1809604 7.3523890}%
\special{ar 2300 1000 100 600  7.4552461 7.6266747}%
\special{ar 2300 1000 100 600  7.7295318 7.8539816}%
%
\special{pn 8}%
\special{ar 2300 1000 100 600  1.5707963 4.7123890}%
%
\special{pn 8}%
\special{ar 3500 1000 100 600  4.7123890 4.8838176}%
\special{ar 3500 1000 100 600  4.9866747 5.1581033}%
\special{ar 3500 1000 100 600  5.2609604 5.4323890}%
\special{ar 3500 1000 100 600  5.5352461 5.7066747}%
\special{ar 3500 1000 100 600  5.8095318 5.9809604}%
\special{ar 3500 1000 100 600  6.0838176 6.2552461}%
\special{ar 3500 1000 100 600  6.3581033 6.5295318}%
\special{ar 3500 1000 100 600  6.6323890 6.8038176}%
\special{ar 3500 1000 100 600  6.9066747 7.0781033}%
\special{ar 3500 1000 100 600  7.1809604 7.3523890}%
\special{ar 3500 1000 100 600  7.4552461 7.6266747}%
\special{ar 3500 1000 100 600  7.7295318 7.8539816}%
%
\special{pn 8}%
\special{ar 3500 1000 100 600  1.5707963 4.7123890}%
%
\special{pn 8}%
\special{ar 2900 1000 400 400  1.5707963 6.2831853}%
%
\special{pn 8}%
\special{ar 3700 1000 400 400  1.4959365 3.1415927}%
%
\special{pn 8}%
\special{pa 2900 1400}%
\special{pa 3700 1400}%
\special{fp}%
%
\special{pn 13}%
\special{ar 1700 1000 200 200  0.0000000 6.2831853}%
%
\special{pn 13}%
\special{ar 700 1000 200 200  0.0000000 6.2831853}%
\put(11.0000,-10.5000){\makebox(0,0)[lb]{$\cdots$}}%
%
\special{pn 13}%
\special{ar 700 1000 600 600  1.5707963 4.7123890}%
%
\special{pn 13}%
\special{pa 700 400}%
\special{pa 3800 400}%
\special{fp}%
%
\special{pn 13}%
\special{pa 700 1600}%
\special{pa 3800 1600}%
\special{fp}%
%
\special{pn 8}%
\special{pa 2900 600}%
\special{pa 2810 540}%
\special{fp}%
\special{pa 2900 600}%
\special{pa 2820 680}%
\special{fp}%
%
\special{pn 8}%
\special{pa 2200 1000}%
\special{pa 2140 910}%
\special{fp}%
\special{pa 2200 1000}%
\special{pa 2260 910}%
\special{fp}%
%
\special{pn 8}%
\special{pa 3400 1000}%
\special{pa 3340 910}%
\special{fp}%
\special{pa 3400 1000}%
\special{pa 3460 910}%
\special{fp}%
%
\special{pn 20}%
\special{sh 1}%
\special{ar 3730 1400 10 10 0  6.28318530717959E+0000}%
\put(20.8000,-6.0000){\makebox(0,0)[lb]{$\gamma$}}%
\put(33.0000,-6.0000){\makebox(0,0)[lb]{$\zeta$}}%
\put(28.7000,-15.2000){\makebox(0,0)[lb]{$\alpha$}}%
\end{picture}%
\end{center}
\end{figure}
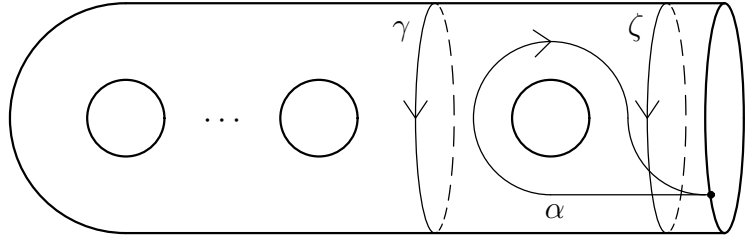

From (\ref{inj-infty}) and Lemma \ref{inj-GLA}, the homomorphisms 
$$
\Qpi(\Sigma_{h,1}) \overset{i^h_g}\to
\Qpi(\Sigma_{g,1}) \overset{i^g_\infty}\to
\Qpi(\Sigma_{\infty,1}) 
$$ 
are injective. Hence we may regard $u \in Z(\Qpi(\Sigma_{h,1}))
\cap Z(\Qpi(\Sigma_{g,1}))$. 
By Corollay \ref{approx} we have polynomials 
$f_h(\gamma) \in \mathbb{Q}[\gamma]$ and 
$f_h(\zeta) \in \mathbb{Q}[\zeta]$ such that 
$$
u \equiv \vert f_h(\gamma)\vert \pmod{\Qpi^{(h)}(p)}, 
\quad\mbox{and}\quad
u \equiv \vert f_g(\zeta)\vert \pmod{\Qpi^{(g)}(p)}. 
$$
By Lemma \ref{filter} we have 
$$
\vert f_h(\gamma)\vert\equiv u \equiv 
\vert f_g(\zeta)\vert \pmod{\Qpi^{(g)}(p)}. 
$$
\par
Choose a symplectic expansion $\theta: \pi^{(g)} \to \T^{(g)}$. 
For the rest of the proof, we drop the suffix $\empty^{(g)}$. 
If $v \in \Qpi(p)$, then $N\theta(v) \in N(\T_p)$ and so 
$(N\theta(v))\theta(\alpha) = (N\theta(v))\theta(\alpha-1)
\in \T_{p+1-2} = \T_{p-1}$, since $\theta(\alpha-1) \in \T_1$. 
Hence we have 
$(N\theta(f_h(\gamma)))\theta(\alpha) \equiv
(N\theta(f_g(\zeta)))\theta(\alpha) \pmod{\T_{p-1}}$. 
Moreover we have $(N\theta(f_h(\gamma)))\theta(\alpha) = 0$ 
by \cite{KK} Theorem 1.2.2, 
since the free loop $\gamma$ and the based loop $\alpha$ are 
disjoint. Thus we obtain
\begin{equation}
(N\theta(f_g(\zeta)))\theta(\alpha) \in \T_{p-1}.
\label{vanish}
\end{equation}
On the other hand, we have $\vert f_g(\zeta)\vert \not\in \Qpi(p)$ 
because $u \not\in \Qpi(p)$. $\theta(f_g(\zeta))$ is a power series
in the symplectic form $\omega$. Hence, since $p$ is the minimum, 
$p$ is odd $\geq 5$, and $N(\theta(f_g(\zeta))) =cN(\omega^{(p-1)/2})
+({\rm higher\ term})$ for some non-zero constant $c \in \mathbb{Q}$
(we have $p\neq 3$ since $N(\omega)=0$). Then we have 
\begin{eqnarray*}
(N\theta(f_g(\zeta)))\theta(\alpha) &\equiv&
cN(\omega^{(p-1)/2})([\alpha])\\ &=&
((p-1)/2)c(-[\alpha]\omega^{(p-3)/2} + \omega^{(p-3)/2}[\alpha])
\not\equiv 0 \pmod{\T_{p-1}}.
\end{eqnarray*}
Here the equality between the second and the third terms follows from
the computation that if $\{A_i, B_i\}^g_{i=1} \subset H$ is a symplectic basis
and $m\ge 2$, then
\begin{eqnarray*}
N(\omega^{m})([\alpha]) &=&
m\sum_{i=1}^g \left(
\begin{array}{c}
([\alpha]\cdot A_i)B_i\omega^{m-1}- ([\alpha]\cdot B_i)A_i\omega^{m-1} \\
+ ([\alpha]\cdot B_i)\omega^{m-1}A_i- ([\alpha]\cdot A_i)\omega^{m-1}B_i
\end{array} \right) \\
&=& -m[\alpha]\omega^{m-1} + m\omega^{m-1}[\alpha].
\end{eqnarray*}
Namely we have $(N\theta(f_g(\zeta)))\theta(\alpha) \not\in\T_{p-1}$.
This contradicts (\ref{vanish}). 
Hence we obtain $Z(\Qpi(\Sigma_{\infty,1})) \subset \mathbb{Q}1$. 
This completes the proof of Theorem \ref{main}.

\section{Appendix: The Lie algebra of linear chord diagrams}

The Lie bracket on the space $\mathcal{C}$ of oriented chord diagrams is
extended to a bracket on the space of linear chord diagrams
$$
[\,,\,]: \mathcal{LC}_m\otimes\mathcal{LC}_{m^{\prime}}
\to \mathcal{LC}_{m+m^{\prime}-1},
$$
which makes the direct sum
$$
\mathcal{LC} := \bigoplus^\infty_{m=1}\mathcal{LC}_m
$$
a Lie algebra.
We define the bracket by using the stable isomorphism $a: \mathcal{LC}
\to \Der(T)^\fsp$ in Lemma 3.1.1 and the Lie algebra structure
on $\Der(T)$, the derivation algebra of $T$. Here $T :=
\bigoplus^\infty_{m=0} H^{\otimes m}$ is the tensor algebra of $H$, 
the rational symplectic vector space of genus $g \geq 1$. As before, we
identify the dual
$H^*=\Hom(H, \mathbb{Q})$ with $H$ by the Poincar\'e duality $H \cong
H^*$, $X \mapsto (Y\mapsto Y\cdot X)$. Then the restriction map to the 
subspace $H$ identifies the space $\Der(T)$ with the space $\Hom(H, T) =
H^*\otimes T = H\otimes T=\bigoplus^{\infty}_{m=1} H^{\otimes m}$. \par
It should be remarked the set of linear chord diagrams of $m$ chords 
with the standard label is a basis of the space $\mathcal{LC}_m$. 
Here $C = \{(i_1, j_1), \dots, (i_m, j_m)\}$ is a linear chord diagram
of $m$ chords  with the standard label, if and only if $\{i_1\dots, i_m,
j_1, \dots, j_m\} = \{1, 2, \dots, 2m\}$ and $i_k < j_k$ for any $k$
(see the proofs of Lemmas 3.1.3 and 3.1.4). 
For the rest of this appendix, we regard $\mathcal{LC}$ as the vector
space spanned by the (unlabeled) linear chord diagrams. Thus we identify 
the labeled linear chord diagram $C$
with the fixed-point free involution
$\sigma(C) :=  (i_1, j_1)\cdots (i_m, j_m) \in \mathfrak{S}_{2m}$. 
The invariant tensor $a(C) \in (H^{\otimes 2m})^\fsp$ is defined as in
\S3.1 and the map $a: \mathcal{LC}_m \to (H^{\otimes 2m})^\fsp$ is 
a stable isomorphism (Lemma 3.1.1). This stable isomorphism 
induces a Lie algebra structure on the space $\mathcal{LC}$ such that 
$a: \mathcal{LC} \to \Der(T)^\fsp$ is a Lie algebra homomorphism. \par
In order to describe the bracket on $\mathcal{LC}$, 
we introduce new amalgamations of two linear chord diagrams. 
Let $C$ and $C'$ be linear chord diagrams of $m$ and $l$ chords, 
respectively. They are regarded as involutions $\sigma = \sigma(C) 
\in \mathfrak{S}_{2m}$ and $\sigma' = \sigma(C') 
\in \mathfrak{S}_{2l}$. For $2 \leq t \leq 2l$, we define 
the $t$-th amalgamation $C\ast_{t}C'$ as an involution 
$\sigma'' = \sigma(C\ast_{t}C') \in \mathfrak{S}_{2m+2l-2}$ by
\begin{eqnarray*}
&&\sigma''(\sigma(1)+t-2) := f_{m,t}(\sigma'(t))\\
&&\sigma''(f_{m,t}(\sigma'(t))):=\sigma(1)+t-2 \\
&&\sigma''(k) := \begin{cases}
f_{m,t}(\sigma'(k)), & 
\mbox{if $k \leq t-1$ and $k\neq\sigma'(t)$,}\\
\sigma(k-t+2) +t-2, &
\mbox{if $t\leq k \leq t+2m-2$ and $k\neq\sigma(1)+t-2$,}\\
f_{m,t}(\sigma'(k-2m+2)), & 
\mbox{if $t+2m-1\leq k$ and $k-2m+2\neq\sigma'(t)$}.
\end{cases}
\end{eqnarray*}
Here $f_{m,t}: \{1,\dots,t-1,t+1,\dots,2l\} \to 
\{1,2,\dots,2m+2l-2\}$ is defined by 
$$
f_{m,t}(k) := 
\begin{cases}
k, & \mbox{if $k \leq t-1$,}\\
k+2m-2, & \mbox{if $k \geq t+1$.}
\end{cases}
$$
In other words, we delete the $t$-th vertex from $C'$ and 
the first vertex from $C$, insert the deleted $C$ into the 
$t$-th hole of the deleted $C'$, and connect the vertices 
$\sigma(C)(1)$ and $\sigma(C')(t)$. The resulting linear chord 
diagram with the standard label is exactly the $t$-th amalgamation 
$C\ast_t C' \in \mathcal{LC}_{m+l-1}$. See Figure 14. Interchanging the role
of $C$ and $C^{\prime}$, we can define the $s$-th amalgamation 
$C'\ast_s C$ for $2 \leq s \leq 2m$.

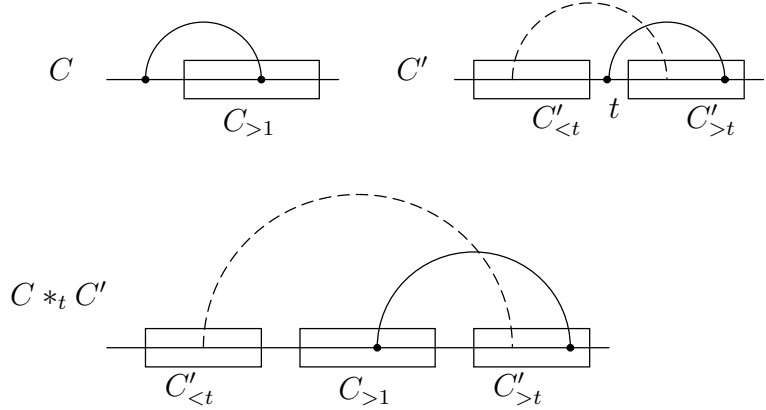
\begin{figure}
\begin{center}
\caption{the $t$-th amalgamation $C\ast_t C^{\prime}$}

\vspace{0.2cm}

\unitlength 0.1in
\begin{picture}( 39.0000, 19.3000)(  3.0000,-25.3000)
%
\special{pn 8}%
\special{pa 800 1000}%
\special{pa 2000 1000}%
\special{fp}%
%
\special{pn 20}%
\special{sh 1}%
\special{ar 1000 1000 10 10 0  6.28318530717959E+0000}%
%
\special{pn 8}%
\special{ar 1300 1000 300 300  3.1415927 6.2831853}%
%
\special{pn 8}%
\special{pa 1200 900}%
\special{pa 1900 900}%
\special{pa 1900 1100}%
\special{pa 1200 1100}%
\special{pa 1200 900}%
\special{fp}%
%
\special{pn 8}%
\special{pa 2600 1000}%
\special{pa 4200 1000}%
\special{fp}%
%
\special{pn 8}%
\special{pa 2700 900}%
\special{pa 3300 900}%
\special{pa 3300 1100}%
\special{pa 2700 1100}%
\special{pa 2700 900}%
\special{fp}%
%
\special{pn 8}%
\special{pa 3500 1100}%
\special{pa 4100 1100}%
\special{pa 4100 900}%
\special{pa 3500 900}%
\special{pa 3500 1100}%
\special{fp}%
%
\special{pn 20}%
\special{sh 1}%
\special{ar 3390 1000 10 10 0  6.28318530717959E+0000}%
%
\special{pn 8}%
\special{ar 3700 1000 300 300  3.1415927 6.2831853}%
%
\special{pn 8}%
\special{ar 3300 1000 400 400  3.1415927 3.2915927}%
\special{ar 3300 1000 400 400  3.3815927 3.5315927}%
\special{ar 3300 1000 400 400  3.6215927 3.7715927}%
\special{ar 3300 1000 400 400  3.8615927 4.0115927}%
\special{ar 3300 1000 400 400  4.1015927 4.2515927}%
\special{ar 3300 1000 400 400  4.3415927 4.4915927}%
\special{ar 3300 1000 400 400  4.5815927 4.7315927}%
\special{ar 3300 1000 400 400  4.8215927 4.9715927}%
\special{ar 3300 1000 400 400  5.0615927 5.2115927}%
\special{ar 3300 1000 400 400  5.3015927 5.4515927}%
\special{ar 3300 1000 400 400  5.5415927 5.6915927}%
\special{ar 3300 1000 400 400  5.7815927 5.9315927}%
\special{ar 3300 1000 400 400  6.0215927 6.1715927}%
\special{ar 3300 1000 400 400  6.2615927 6.2831853}%
\put(5.0000,-10.0000){\makebox(0,0)[lb]{$C$}}%
\put(23.0000,-10.0000){\makebox(0,0)[lb]{$C^{\prime}$}}%
\put(34.0000,-12.0000){\makebox(0,0)[lb]{$t$}}%
%
\special{pn 8}%
\special{pa 1000 2500}%
\special{pa 1600 2500}%
\special{pa 1600 2300}%
\special{pa 1000 2300}%
\special{pa 1000 2500}%
\special{fp}%
%
\special{pn 8}%
\special{pa 1800 2500}%
\special{pa 2500 2500}%
\special{pa 2500 2300}%
\special{pa 1800 2300}%
\special{pa 1800 2500}%
\special{fp}%
%
\special{pn 8}%
\special{pa 2700 2300}%
\special{pa 3300 2300}%
\special{pa 3300 2500}%
\special{pa 2700 2500}%
\special{pa 2700 2300}%
\special{fp}%
%
\special{pn 8}%
\special{ar 2700 2400 500 500  3.1415927 6.2831853}%
%
\special{pn 8}%
\special{ar 2100 2400 800 800  3.1415927 3.2165927}%
\special{ar 2100 2400 800 800  3.2615927 3.3365927}%
\special{ar 2100 2400 800 800  3.3815927 3.4565927}%
\special{ar 2100 2400 800 800  3.5015927 3.5765927}%
\special{ar 2100 2400 800 800  3.6215927 3.6965927}%
\special{ar 2100 2400 800 800  3.7415927 3.8165927}%
\special{ar 2100 2400 800 800  3.8615927 3.9365927}%
\special{ar 2100 2400 800 800  3.9815927 4.0565927}%
\special{ar 2100 2400 800 800  4.1015927 4.1765927}%
\special{ar 2100 2400 800 800  4.2215927 4.2965927}%
\special{ar 2100 2400 800 800  4.3415927 4.4165927}%
\special{ar 2100 2400 800 800  4.4615927 4.5365927}%
\special{ar 2100 2400 800 800  4.5815927 4.6565927}%
\special{ar 2100 2400 800 800  4.7015927 4.7765927}%
\special{ar 2100 2400 800 800  4.8215927 4.8965927}%
\special{ar 2100 2400 800 800  4.9415927 5.0165927}%
\special{ar 2100 2400 800 800  5.0615927 5.1365927}%
\special{ar 2100 2400 800 800  5.1815927 5.2565927}%
\special{ar 2100 2400 800 800  5.3015927 5.3765927}%
\special{ar 2100 2400 800 800  5.4215927 5.4965927}%
\special{ar 2100 2400 800 800  5.5415927 5.6165927}%
\special{ar 2100 2400 800 800  5.6615927 5.7365927}%
\special{ar 2100 2400 800 800  5.7815927 5.8565927}%
\special{ar 2100 2400 800 800  5.9015927 5.9765927}%
\special{ar 2100 2400 800 800  6.0215927 6.0965927}%
\special{ar 2100 2400 800 800  6.1415927 6.2165927}%
\special{ar 2100 2400 800 800  6.2615927 6.2831853}%
\put(14.0000,-13.0000){\makebox(0,0)[lb]{$C_{>1}$}}%
\put(30.0000,-13.0000){\makebox(0,0)[lb]{$C^{\prime}_{<t}$}}%
\put(38.0000,-13.0000){\makebox(0,0)[lb]{$C^{\prime}_{>t}$}}%
%
\special{pn 8}%
\special{pa 800 2400}%
\special{pa 3400 2400}%
\special{fp}%
\put(3.0000,-22.0000){\makebox(0,0)[lb]{$C\ast_t C^{\prime}$}}%
\put(20.0000,-27.0000){\makebox(0,0)[lb]{$C_{>1}$}}%
\put(11.0000,-27.0000){\makebox(0,0)[lb]{$C^{\prime}_{<t}$}}%
\put(28.0000,-27.0000){\makebox(0,0)[lb]{$C^{\prime}_{>t}$}}%
%
\special{pn 20}%
\special{sh 1}%
\special{ar 1600 1000 10 10 0  6.28318530717959E+0000}%
%
\special{pn 20}%
\special{sh 1}%
\special{ar 4000 1000 10 10 0  6.28318530717959E+0000}%
%
\special{pn 20}%
\special{sh 1}%
\special{ar 2200 2400 10 10 0  6.28318530717959E+0000}%
%
\special{pn 20}%
\special{sh 1}%
\special{ar 3200 2400 10 10 0  6.28318530717959E+0000}%
\end{picture}%
\end{center}
\end{figure}

By a straightforward computation we see that the bracket 
on the space $\Der(T) = \bigoplus^{\infty}_{m=1}H^{\otimes m}$ is given by 
\begin{eqnarray*}
[X_1\cdots X_p, Y_1\cdots Y_q] 
&=& \sum^q_{t=2}(Y_t\cdot X_1)Y_1Y_2\cdots Y_{t-1}X_2\cdots
X_pY_{t+1}\cdots Y_q \\
&&-\sum^p_{s=1}(X_s\cdot Y_1)X_1X_2\cdots X_{s-1}Y_2\cdots
Y_qX_{s+1}\cdots X_p
\end{eqnarray*}
for $X_s$, $Y_t \in H$. Hence, by a similar argument 
to \S3.1, we have 
\begin{equation}
[C, C'] = -\sum^{2l}_{t=2}C\ast_t C' + 
\sum^{2m}_{s=2}C'\ast_s C.
\label{LCbracket}
\end{equation}
It is easy to compute the center and the homology of the Lie algebra 
$\mathcal{LC}$. We denote $E_0 := -\frac12\{\{1,2\}\} \in
\mathcal{LC}_1$. Then we have $(-2E_0)\ast_t C= C\ast_2(-2E_0) = C$ 
for any $t$. Hence $\mathcal{LC}_m$ is just the eigenspace of 
the operator ${\rm ad}(E_0)$ corresponding to the eigenvalue $m-1 (\geq 0)$.
This observation implies the center of $\mathcal{LC}$ vanishes
\begin{equation}
Z(\mathcal{LC}) = 0.
\end{equation}
Using the Lie derivative $\mathcal{L}_{E_0}$, we can prove that the standard 
chain complex $C_*(\mathcal{LC})$ is quasi-isomorphic to the
$E_0$-invariant subcomplex $C_*(\mathcal{LC})^{E_0} =
C_*(\mathcal{LC}_1)$. Thus we obtain
\begin{equation}
H_*(\mathcal{LC}) =
\begin{cases}
\mathbb{Q}, & \mbox{if $*=0,1$,}\\
0, & \mbox{otherwise.}
\end{cases} 
\end{equation}
\par
We denote by $W_1 := \mathbb{Q}[x]\frac{d}{dx}$ the Lie algebra of 
polynomial vector fields in one variable $x$. The subalgebras 
$L_0 := x\mathbb{Q}[x]\frac{d}{dx}$ and 
$L_1 := x^2\mathbb{Q}[x]\frac{d}{dx}$ play important roles 
in Gel'fand-Fuks theory (cf., e.g., \cite{Fuks}). 
The formula (\ref{LCbracket}) implies immediately that the surjection 
$$
\kappa: \mathcal{LC} \to L_0
$$
assigning $-2x^m\frac{d}{dx}$ to each linear chord diagram of $m$ chords
is a Lie algebra homomorphism. The vector field $\kappa(E_0) =
x\frac{d}{dx}$ is just the Euler operator. \par
By analogy with the Lie subalgebra $L_1$, we consider the Lie algebra 
$\mathcal{LC}^1 := \bigoplus^\infty_{m=2}\mathcal{LC}_m$. 
The homology group $H_*(\mathcal{LC}^1)$ is decomposed into 
the eigenspaces of the action of $E_0$. We denote by
$H_*(\mathcal{LC}^1)_{(k)}$ the eigenspace corresponding to the eigenvalue 
$k \geq 1$. The first homology group 
$H_1(\mathcal{LC}^1)_{(k)}$ does not vanish for any integer $k \geq 1$, 
and its dimension diverges when $k$ goes to the infinity. 
The proof will appear elsewhere. The generating function of the Euler 
characteristics $\sum^\infty_{k=1}\chi(H_*(\mathcal{LC}^1)_{(k)})x^k$ 
can be computed as
$$
-3x-12x^2-61x^3-570x^4-6600x^5-91910x^6
-1460655x^7-26064990x^8-\cdots.
$$
This is completely different from the homology of the Lie subalgebra 
$L_1$ given by Goncharova \cite{Gon}.

\noindent \textsc{Nariya Kawazumi\\
Department of Mathematical Sciences,\\
University of Tokyo,\\
3-8-1 Komaba Meguro-ku Tokyo 153-8914 JAPAN}\\
\noindent \texttt{E-mail address: kawazumi@ms.u-tokyo.ac.jp}

\vspace{0.5cm}

\noindent \textsc{Yusuke Kuno\\
Department of Mathematics,\\
Tsuda College,\\
2-1-1, Tsuda-Machi, Kodaira-shi, Tokyo 187-8577 JAPAN}\\
\noindent \texttt{E-mail address: kunotti@tsuda.ac.jp}

\end{document}